\documentclass[11pt]{amsart}
\usepackage{url}
\usepackage{hyperref}
\usepackage[hyphenbreaks]{breakurl}
\usepackage{subfiles}
\usepackage{tikz-cd}
\usepackage{geometry} 
\usepackage{verbatim} % adds environment for commenting out blocks of text & for better verbatim
\usepackage{color}
\usepackage{amsmath}
\usepackage{amsfonts, amssymb, amsthm, setspace, latexsym}
\usepackage{multicol}
\usepackage{graphicx}
\usepackage[all,cmtip,curve, knot,frame]{xy} 
\usepackage[at]{easylist}
\usepackage{subfiles}
\usepackage{enumitem}
\usepackage{setspace}
\usepackage{todonotes}
\usepackage{scalefnt}
\usepackage{tikz}

\usepackage{xcolor}
\hypersetup{
    colorlinks,
    linkcolor={red!50!black},
    citecolor={blue!50!black},
    urlcolor={blue!80!black}
}

\usetikzlibrary{knots}

%%\usetikzlibrary{
%  3d,
 % hobby,
%decorations.pathreplacing,
 % shapes.geometric,
  %calc
%%}

\newcommand{\mvhide}[1]{}

\newcommand{\Y}{\mathcal{Y}}
\newcommand{\KY}{\K[\Y]}
\newcommand{\RY}{\cR[\Y]}
\newcommand{\RYn}{\cR[\Y_n]}

 % for root of inverse prod of Ys
 % for the localized algebra
\newcommand{\X}{\mathcal{X}}
\newcommand{\HKuniv}{\HKop}

\newcommand{\HCq}{\mathrm{HC}_\q}

\newcommand{\M}{M}

\newcommand{\ellen}{\ell en}
\newcommand{\ellgen}{\ell} %made macro for generator ell in case changes?

\newcommand{\q}{\mathtt{q}}
\newcommand{\Sq}{\boldsymbol{q}} % for SL 
 % for SL 
\newcommand{\Spi}{\overline{\pi}} % for SL 
%%%\newcommand{\Sq}{\mathsf{q}} % for SL 
 %braid group font  % was mathtt % was mathsf
 %braid group font
 %braid group font
\newcommand{\Q}{\mathbb{Q}} % rationals
 % root lattice 
\newcommand{\K}{\mathcal{K}}

\newcommand{\AffSym}{\widehat{\mathfrak{S}}_n}
\newcommand{\AffSymn}[1]{\widehat{\mathfrak{S}}_{#1}}

\newcommand{\Sym}{{\mathfrak{S}}}
 %sigma was used oh, or varpi?
 % for SL 
 % for SL  % try new font
\newcommand{\Zprod}{\boldsymbol{Z}} % for SL  % 
\newcommand{\Sn}{{\Sym}_n}
\newcommand{\SN}{{\Sym}_{N}}
\newcommand{\Sm}[1]{{\Sym}_{{#1}}}
\newcommand{\id}{\operatorname{Id}}

\newcommand{\uu}{\underline{u}}

\newcommand{\Bullet}{\begin{tikzpicture}\draw[fill=gray] (0,0) circle (.7ex); \end{tikzpicture} }

\newcommand{\cA}{\mathcal{A}}

\newcommand{\cO}{\mathcal{O}}

% \newcommand{\V}{\mathbb{V}} % already defined below

 % can rid overline

 %May want to change this for clarity

\newcommand{\SkCat}{\mathrm{SkCat}}

\newcommand{\SkAlg}{\mathrm{SkAlg}}

%This is my attempt to make notation for GL_N/SL_N skein algebra with n points. 

\newcommand{\DAHA}[2]{\mathrm{\mathbb{H}_{#2}}}
%%\newcommand{\DAHA}[2]{\mathrm{\mathbb{H}_{#2}(#1)}}  %%old
%% compare to \DN
%#1=N  #2=n

%%%\newcommand{\Sk}{\mathcal{SK}} % need to make macro since often
% inconsistent with notation
 % argument: N,k, lambda

 %argument:N,k,lambda,mu

 % argument: N,k, lambda

%{\overline{\Lambda^+}} 
 %{\widetilde{\Lambda^+}}
  %{\Lambda_{\slN}}
 %{\widetilde{\Lambda}}  %
 %{\Lambda^+}
  %macro for 1/N

\newcommand{\iBt}{i_{\widetilde{\mathcal B}}}
\newcommand{\DN}{\HH_N} % for Daha N at our parameters?
%%% could redefine to \DAHA{N}{N}
 % for symmetric Laurent polys. change? 
\newcommand{\SymY}{S(\Y)}
\newcommand{\SymYn}{S(\Y_n)}
\newcommand{\Yn}{\Y_n}
\DeclareMathOperator{\Stab}{Stab}
 % for HH0

%%\newcommand{\fN}{ {1}/{N}} 

%This is supposed to be some way of talking about GL_N and SL_N simultaneously... 
 % or you can make it CD_{\q}(G) but both mv and dj did not like CD
\newcommand{\cuspell}[1]{\operatorname{Cusp}_{\operatorname{ell}}({#1})}

\newcommand{\SWdaff}{\ddot{\mathrm{SW}}}
\newcommand{\SWe}{\SWdaff \ee}  %% ok to change to put in the \cdot if you like in all these macros
\newcommand{\SWeN}{\SWdaff \eN}
\newcommand{\eSWe}{\ee \SWdaff \ee}  
\newcommand{\eSWeN}{\eN \SWdaff \eN}
 % some notation for det
 %in case of mu typos

 %induced mod
 %induced mod
 %irred
 %irred GL - can skip this notation
 %irred for SL
 %irred for SL ,twist by a

\newcommand{\syt}{\mathrm{SYT}}

 % needs defn
 %H(Y) module
 % equiv rel for pi^n
 % or bar or other ? for class of R
 % or bar or other ? for class of skew tab 
 % isomorphism
\newcommand{\bt}{\boxtimes}
\DeclareMathOperator{\Comm}{Comm}

%%% macros for HH_0 argument 

 %image

% \newcommand{\Comm}{\mathbf{Comm}}  is defined twice. which one?
     % but Comm does not appear in main.tex

\newcommand{\ZZ}{\mathbb{Z}}

\newcommand{\cD}{\mathcal{D}}
\newcommand{\Dq}{{\cD_\q(G)}}
\newcommand{\Oq}{\cO_\q(G)}
\newcommand{\DqG}[1]{{\cD_\q(#1)}}
\newcommand{\OqG}[1]{\cO_\q(#1)}
\newcommand{\DqqG}[1]{{\cD_q(#1)}}

\newcommand{\Uq}{U_\q(\g)}
\newcommand{\Uqgl}{U_\q(\glN)}
\newcommand{\Uqsl}{U_\q(\slN)}
\newcommand{\detq}{\operatorname{det}_\q}
 %can make font fancy later

 % why did you use c for mathbb vs cal?
 
\newcommand{\cR}{\mathcal{R}}
\newcommand{\cU}{U}
\newcommand{\g}{\mathfrak{g}}
 % Don't remember the symbol for this
%do you want hookrightarrow?
\newcommand{\HH}{\mathbb{H}}%find the right symbol for this...
\newcommand{\HG}{\HH_n^{\GL}({q,t})}
\newcommand{\HS}{\HH_n^{\SL}(\Sq,t)}
\newcommand{\HGN}{\HH_N^{\GL}({q,t})}

\renewcommand{\H}{\operatorname{H}} 
\newcommand{\Hf}{\operatorname{H}^{\mathrm{fin}}}  %finite Hecke
\newcommand{\Hfn}{\Hf_n}
\newcommand{\SH}{{\mathcal{S}H}}  % for sym ot Hf , make fancier later? 
\newcommand{\SHn}{\mathcal{S}H_n} 
 %find better symbol for this...

\newcommand{\C}{\mathbb{C}}
\newcommand{\tr}{\mathrm{tr}}
\newcommand{\Rep}{\operatorname{Rep}}

\newcommand{\ot}{\otimes}
\newcommand{\rt}[1]{\underset{#1}{\otimes}}

\newcommand{\modu}{\textrm{-mod}}
\newcommand{\Hom}{\operatorname{Hom}}
\newcommand{\SL}{\mathrm{SL}}
\newcommand{\SLN}{\SL_N}
\newcommand{\SLn}{\SL_n}
\newcommand{\GL}{\mathrm{GL}}
\newcommand{\GLN}{\GL_N}

\newcommand{\slN}{\mathfrak{sl}_N}
\newcommand{\glN}{\mathfrak{gl}_N}
\newcommand{\F}{F'}
\DeclareMathOperator{\Res}{Res}
\DeclareMathOperator{\Ind}{Ind} 
\DeclareMathOperator{\End}{End} 

\newcommand{\HC}{\operatorname{HC}}
\newcommand{\HKe}{\HKop(\epsilon)}
\newcommand{\HKop}{\operatorname{HK}}

\newcommand{\HKchi}{\HKop(\chi)}

\newcommand{\ad}{\operatorname{ad}}
\newcommand{\un}{\mathbf{1}}
\newcommand{\Dist}{\operatorname{Dist}}

\newcommand{\Repq}{\mathrm{Rep}_{\q}}

\newcommand{\sgn}{\operatorname{sgn}}
\newcommand{\asgn}{ \{ \underline{{\mathbf a}}\} \boxtimes \sgn}
\newcommand{\atup}{{\underline{{\mathbf a}} }}
\newcommand{\btup}{{\underline{{\mathbf b}} }}
\newcommand{\lambdatup}{{\underline{{\mathbf \lambda}} }}
\newcommand{\vv}{\underline{v}}  % for induced module. can pick other font or bold?  uu was already defd with an underline... hm...
\newcommand{\ek}[1]{\mathrm{e}_{{#1}}} % for the sign idempotent
\newcommand{\eN}{\ek{N}}
\newcommand{\en}{\ek{n}}
\newcommand{\ee}{\mathrm{e}} % for the sign idempotent undecorated
\newcommand{\etrivn}{\etriv{n}}
\newcommand{\etriv}[1]{\mathrm{e}^{+}_{#1}}
  %short for % \ee^+\cdot \K_\omega[\Lambda \oplus \Lambda] \# \SN \cdot \ee^+
\newcommand{\Coinvq}{\cD_q({H})^{\SN}} 

\newcommand{\SmashDq}{\cD_{q}({H}) \# \SN}
\newcommand{\SmashY}{\K[\Y] \# \SN}
\newcommand{\coinv}{\mathtt{S}}  %short for % \ee^+\cdot \K_\omega[\Lambda \oplus \Lambda] \# \SN \cdot \ee^+
\newcommand{\smashD}{\mathtt{D}}
\newcommand{\smashY}{\Gamma}

\newcommand{\SymSN}{\SymS_N}
\newcommand{\SymS}{\mathtt{S} \mathfrak{S}}
\newcommand{\cas}[1]{c_{#1}}  % for casimirs in As or Bs to match Ys
%to denote the Springer sheaf?
%to denote the Springer sheaf?

%\newcommand{\cl}{\operatorname{cl}} 
\newcommand{\cl}{\operatorname{\mathtt{cl}}} 
\DeclareMathOperator{\spec}{Spec}
\DeclareMathOperator{\MatN}{Mat_N}

\DeclareMathOperator{\trivial}{\operatorname{triv}}
\DeclareMathOperator{\Rosso}{\mathfrak{R}} %sort out font later?

% defined twice \newcommand{\Dqstr}{\Dqmod{G}_G^{str}}
\newcommand{\Dqstr}[1]{\mathcal{D}_\q({#1}){\modu}^{\hspace{0.5pt} {#1}}}
\newcommand{\Dqadm}[1]{\mathcal{D}_\q({#1}){\modu}^{\, \mathrm{adm}}}
\newcommand{\DqadmC}[1]{\mathcal{D}_\q({#1}){\modu}^{\, \mathrm{adm}}_C}
% \newcommand{\Dqstr}{\Dqmod{G}_G^{str}} % which do you want?
   % but Dqstr does not apper in main.tex 

\newcommand{\Vect}{\operatorname{Vect}}

\newcommand{\fg}{\mathfrak{g}}
\newcommand{\fh}{\mathfrak{h}}
\newcommand{\ft}{\mathfrak{t}}

\newcommand{\cB}{\mathcal{B}}

\DeclareMathOperator{\Spec}{Spec}

%\DeclareMathOperator{\Hom}{Hom}

%\DeclareMathOperator{\End}{End}

% \newcommand{\Comm}{\mathbf{Comm}}

%\DeclareMathOperator{\Sym}{Sym}

%\DeclareMathOperator{\Rep}{Rep}

%----Misc

%\newcommand{\id}{\mathrm{id}}
\newcommand{\aff}{\operatorname{\textit{aff}}}

\newcommand{\GIT}{{/\! /}}

%----TFT

%----Spin Hurwitz stuff

%----Character Theory Stuff
%\newcommand{\Ch}{\bD\mathrm{Ch}}

%\newcommand{\ad}{\mathrm{ad}}

%\newcommand{\St}[2]{\lrsub{#1}{\underline{\mathfrak{st}}}{#2}}

%\DeclareMathOperator{\st}{{\bf st}}

%\newcommand{\Spr}{Spr}
%\newcommand{\Orb}{\bD\mathrm{Orb}}

\newcommand{\op}{\mathrm{op}}

\newcommand{\coker}{\mathrm{coker}}

%Notation for map from DAHA to SkAlg, #1=N, #2=n
%\newcommand{\SW}{\Phi}

 % or C(t) or whatever underlying field H is defined over

% --- for the End(Springer) argument, names of various ops
 %endomorphism

 %element of DAHA which defines Si

 %quasi idempotent

\newcommand{\phii}{\varphi_i} %intertwiner
\newcommand{\phij}[1]{\varphi_{{#1}}}
\newcommand{\nui}{\nu_i} %intertwiner
\newcommand{\nuj}[1]{\nu_{{#1}}}
\newcommand{\deltai}{\delta_i}
\newcommand{\aij}[2]{a_{{#1, #2}}} %poly
\newcommand{\bij}[2]{b_{{#1, #2}}} %poly
\newcommand{\fij}[2]{f_{{#1, #2}}} %poly
 %vector subscript \chi? 
\newcommand{\qrho}{q^{\rho}} % or {t^{-2 \rho}}
\newcommand{\trho}{t^{-2\rho}} % or {t^{-2 \rho}}
\newcommand{\HHloc}{\widetilde{\HH}}
\newcommand{\Yloc}{\widetilde{\Y}}
\newcommand{\normal}[1]{\colon\! {#1} \colon\!} %% how look w/o space?
\newcommand{\Drho}{\Ind^\HH_\Y(\trho)} %may change to trho
\newcommand{\HX}{\H(\cX)}
\newcommand{\HY}{\H(\Y)}
\newcommand{\HYn}{\H_n(\Y)}
 %may change to trho
\newcommand{\trrho}{\gamma}
 %% ???

\newcommand{\Inv}{\operatorname{Inv}}
%\newcommand{\window}[1]{[\begin{array}{c}#1\end{array}]}

%---Overline letters

%---Letters with tilde

%----Frak letters

%\newcommand{\fg}{\mathfrak{g}}
%\newcommand{\fh}{\mathfrak{h}}
%\newcommand{\ft}{\mathfrak{t}}
%\newcommand{\fp}{\mathfrak{p}}
%\newcommand{\fu}{\mathfrak{u}}
%\newcommand{\fb}{\mathfrak{b}}

%\newcommand{\fN}{\mathfrak{N}}

\renewcommand{\r}{\mathfrak{r}}

%---Calligraphic Letters

%\newcommand{\cA}{\mathcal A}
%\newcommand{\cB}{\mathcal B}
\newcommand{\cC}{\mathcal C}

\newcommand{\cK}{\mathcal K}
\newcommand{\cX}{\mathcal X}
\newcommand{\cY}{\mathcal Y}

%---Bold Letters

%---Blackboard Bold Letters

%\newcommand{\C}{\mathbb C}

%\newcommand{\F}{\mathbb F}

%\newcommand{\K}{\mathbb K}

%\newcommand{\M}{\mathbb M}

%\newcommand{\T}{\mathbb T}

%\newcommand{\X}{\mathbb X}
%\newcommand{\Y}{\mathbb Y}
\newcommand{\Z}{\mathbb Z}

%----Underline letters

%\newcommand{\ul}{{\underline{\mathfrak{l}}}}

%\newcommand{\uu}{\underline{\mathfrak{u}}}

%\newcommand{\uz}{\underline{\mathfrak{z}}}

%----Bold letters

%------- From Bezrukavnikov and Yun
\makeatletter
\newcommand*\leftdash{\rotatebox[origin=c]{-45}{$\dabar@\dabar@\dabar@$}}
\newcommand*\rightdash{\rotatebox[origin=c]{45}{$\dabar@\dabar@\dabar@$}}
\makeatother

 %Makes a double quotient
 %Double quotient with

%\newcommand{\isom}{\xrightarrow{\sim}}
%\newcommand{\comment}[1]{\textcolor{red}{#1}}

\newcommand{\wedgeN}[1]{\Lambda^{#1} V} %% or can remove the V  %% or thq q

\newcommand{\omitt}[1]{ }
  %notation 
\newcommand{\Rform}[1]{#1_{\cR}}  %notation integral form wrt R
\newcommand{\Kform}[1]{#1_{\cK}}  %notation integral form wrt R
\newcommand{\qform}[1]{#1_{{\kappa}}}  %notation integral form wrt R

%%% ------- from ZAjj

\tikzstyle{V}=[draw, fill =black, circle, inner sep=0pt, minimum size=1.
5pt]
\tikzstyle{bV}=[draw, fill =black, circle, inner sep=0pt, minimum size=4
pt]
% line width wiggle \tikzstyle{over}=[draw=white,double=black,line width=2pt, double distance=.75pt]
\tikzstyle{over}=[draw=white,double=black, double distance=1.75pt]
\tikzstyle{diagram}=[line width=.75pt, scale=\SCALE]

\def\Over[#1,#2][#3,#4]{ %1,2=start position; 3,4=end position
        \draw[over]   (#1,#2) .. controls ++(0,#4*.5-#2*.5) and ++(0,-#4*.5+#2*.5) .. (#3,#4);}
\def\Under[#1,#2][#3,#4]{ %1,2=start position; 3,4=end position
        \draw  (#1,#2) .. controls ++(0,#4*.5-#2*.5) and ++(0,-#4*.5+#2*.5) .. (#3,#4);}
\def\Cross[#1,#2][#3,#4]{%Mimic over, under follows
        \Under[#3,#2][#1,#4]\Over[#1,#2][#3,#4]}

%------ From Ben-Zvi and Nadler

\numberwithin{equation}{section}
\newtheorem*{theorem*}{Theorem}
\newtheorem{theorem}{Theorem}[section]
\newtheorem{theorem/def}[theorem]{Theorem/Definition}
\newtheorem{lemma}[theorem]{Lemma}
\newtheorem{proposition}[theorem]{Proposition}
\newtheorem{conjecture}[theorem]{Conjecture}
\newtheorem{corollary}[theorem]{Corollary}
\theoremstyle{definition}
\newtheorem{definition}[theorem]{Definition}
\newtheorem{remark}[theorem]{Remark}
\newtheorem{notation}[theorem]{Notation}
\newtheorem{example}[theorem]{Example}
\newtheorem{def/prop}[theorem]{Definition/Proposition}

\newcommand{\defterm}[1]{\textbf{#1}}

% \raisebox{\depth}{$D(\mathfrak{g})$} \Bigg/ \raisebox{-\depth}{$\left( 
 %   \sum_{x\in \mathfrak{g}}D(\mathfrak{g})\ad(x) +  \sum_{y\in \C[\mathfrak{g}]^G} D(\mathfrak{g})(y-\zeta(y))\right)$}

    %% macro for right/left quotients
    \newcommand{\leftquotient}[2]{\raisebox{\depth}{$#1$}\Big/ \raisebox{-\depth}{$#2$} }
    
 \newcommand{\rightquotient}[2]{{\raisebox{-\depth}{$#1$}{\Big \backslash} \raisebox{\depth}{$#2$}}}
 \newcommand{\doublequotient}[3]{\raisebox{-\depth}{{$#1$}}\Big \backslash \raisebox{\depth}{{$#2$}}\Big/ \raisebox{-\depth}{{$#3$}}}

    \newcommand{\Cchi}{C(\chi)}
        \newcommand{\SLGL}{({\color{blue}\diamond})} % symbol for SL modifications to GL statements

\usepackage[style=alphabetic, minnames=4, maxnames=4,maxalphanames=4, minalphanames=4]{biblatex}
\addbibresource{biblio-skeins.bib}

\title[Quantum character theory]{Quantum character theory}
\author{Sam Gunningham, David Jordan and Monica Vazirani}
\date{\today}
\calclayout
\begin{document}
\maketitle

\begin{abstract}
   We develop a $\q$-analogue of the theory of conjugation equivariant $\cD$-modules on a complex reductive group $G$. In particular, we define  quantum Hotta-Kashiwara modules and compute their endomorphism algebras.  We use the Schur-Weyl functor of the second author, and develop tools from the corresponding double affine Hecke algebra to study this category in the cases $G=\GLN$ and $\SLN$.   Our results also have an interpretation in skein theory (explored further in a sequel paper), namely a computation of the $\GLN$ and $\SLN$-skein algebra of the 2-torus. 
\end{abstract}

\tableofcontents

\section{Introduction}

\subsection{Classical character theory}
Conjugation equivariant $\cD$-modules on a reductive group and on its Lie algebra have a rich history in representation theory. Harish-Chandra observed that the distributional characters of irreducible unitary representations of semisimple groups satisfy a holonomic system of differential equations, which he used to initiate the character theory of unitary representations \cite{HarishChandra1965}. Hotta and Kashiwara defined and studied the corresponding $\cD$-module, revealing a striking connection with Springer theory -- the study of representations of the Weyl group on certain cohomology groups of Springer fibers \cite{HK}. These ideas have since come to be understood in a wider context including Lusztig's generalized Springer correspondence and his theory of character sheaves \cite{Lusztig1984,Lusztig1985}, as interpreted in the $\cD$-module setting by Ginzburg \cite{Ginsburg1989}, and further developed in a categorical context by Ben-Zvi and Nadler \cite{BZN} and the first author \cite{G}.

Let us recall some of the basic objects from this theory. Given a connected complex reductive group $G$ with Lie algebra $\fg$, maximal torus $H$ and corresponding Cartan subalgebra $\fg$, we have the ring of algebraic differential operators $\cD(\fg)$ and the category of strongly equivariant modules (for the adjoint action) $\cD(\fg)\modu^{G}$.
The prototypical example of a strongly equivariant $\cD(\fg)$-module is the \emph{universal Hotta-Kashiwara module},
\[
\HKuniv^{\cl} = \cD(\fg)/\cD(\fg)\ad(\fg).
\]
The endomorphism algebra of $\HKuniv^{\cl}$ is naturally identified with its $G$-invariants; this is known as the quantum Hamiltonian reduction:
\[
\cD(\fg)\GIT G = \End(\HKuniv^{\cl}) \cong \left(\HKuniv^{\cl}\right)^{G}.
\]
A well-known result of Levasseur and Stafford \cite{LS95,LS96} provides an isomorphism
$
\cD(\fg)\GIT G \cong \cD(\fh)^W.
$

Note that we have an embedding $\C[\fh^\ast]^W \cong \operatorname{Sym}(\fg)^G \hookrightarrow \cD(\fg)$. A strongly equivariant $\cD(\fg)$-module is called \emph{admissible} if this subalgebra acts locally finitely (see \cite{Ginsburg1989}). The primary 
examples of admissible modules are the Hotta-Kashiwara modules associated to a fixed character $\zeta \in \fh/W$:
\[
\HKop^{\cl}(\zeta) = \raisebox{\depth}{$\cD(\mathfrak{g})$} \Bigg/ \raisebox{-\depth}{$\left( 
    \cD(\mathfrak{g})\ad(\fg) +  \sum_{y\in \C[\mathfrak{g}]^G} \cD(\mathfrak{g})(y-\zeta(y))\right)$}.
\]
One of the main results of \cite{HK} is to identify the endomorphism algebra of these modules with the group algebra of the subgroup $W_\zeta$ of the Weyl group $W$ corresponding to the stabilizer of a lift of $\zeta$ to $\fh^\ast$, giving a $\cD$-module interpretation of the Springer representations. 

\subsection{Quantum character theory} 

The goal of this paper is to $\q$-deform these notions using the representation theory of quantum groups. 
Let $\K$ denote a ground field containing the quantum parameter $\q$, which we assume is not a root of unity (see Notation \ref{not:groundfieldquantum}). 

Given a reductive group $G$ with Lie algebra $\fg$, maximal torus $H$ and corresponding Cartan subalgebra $\fg$, we have the ring of quantum differential operators $\Dq$ and the category of strongly equivariant $\Dq$ modules (for the quantum adjoint action) $\Dqstr{G}$, determined by a quantum moment map $\ad:\Oq\to\Dq$ constructed in \cite{VV}, and further studied in \cite{J2011},\cite{BZBJ2},\cite{Safronov-qmm-paper}.
The prototypical example of a strongly equivariant $\Dq$-module is the \emph{universal Hotta-Kashiwara module},
\[
\HKuniv = \Dq/\Dq C(\ad),
\]
where $C(\ad)=\{\ad(\ell)-\epsilon(\ell) \mid \ell \in \Oq\}$ and $\epsilon:\Oq \to \K$ is the counit. As in the classical case, the endomorphism algebra of $\HKuniv$ is naturally identified with its $G$-invariants; this is known as the quantum multiplicative Hamiltonian reduction:

\[
\Dq\GIT G = \End(\HKuniv) \cong \left(\HKuniv\right)^{G}.
\]

We have an embedding $\mathcal{O}(H)^W \cong \Oq^G \hookrightarrow \Dq$. A strongly equivariant $\Dq$-module is called \emph{admissible} (or a \emph{$\q$-character sheaf}) if this subalgebra acts locally finitely. The primary 
examples of admissible modules are the Hotta-Kashiwara modules associated to a fixed character $\chi \in H/W$:
\begin{equation}\label{eqn:HKchi}
\HKchi = \raisebox{\depth}{$\Dq$}\Bigg/\raisebox{-\depth}{$\left( \Dq C(\ad) + \sum_{y\in\Oq^G} \Dq(y-\chi(y))\right)$}
\end{equation}

We will typically focus our attention on reductive groups of type $A$, specifically $G= \SLN$ and $G= \GLN$. This is because our main technical tool is the Schur-Weyl functor $F_N$ defined by the second author \cite{J2008}, which relates strongly equivariant $\Dq$-modules with modules for a certain specialization of the double affine Hecke algebra (DAHA) $\DAHA{N}{N}$ (see Definition \ref{def:GLDAHA}). Our first main result computes the endomorphism algebra of $\HKuniv$ in these cases.

\begin{theorem}\label{mainthm:HKuniv}
Let $G=\GLN$ or $\SLN$.  We have a canonical isomorphism,
\[
\End(\HKuniv)  \cong
\ee \cdot \DAHA{N}{N}^{\op} \cdot \ee
%\cong \cD_\q(H)^{\SN}.
\]
\end{theorem}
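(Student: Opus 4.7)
The strategy is to transport the endomorphism computation from the quantum differential operator side to the DAHA side using the Schur-Weyl functor $F_N$, and then exploit the standard isomorphism $\End_A(A\ee) \cong (\ee A \ee)^{\op}$ for an algebra $A$ and an idempotent $\ee \in A$. In other words, the plan is to reinterpret the quantum Hamiltonian reduction $\End(\HKuniv)\cong(\HKuniv)^G$ on the D-module side as a spherical subalgebra construction on the DAHA side.

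The first step is to show that $F_N$ induces an isomorphism
\[
\End_{\Dqstr{G}}(\HKuniv) \xrightarrow{\sim} \End_{\DAHA{N}{N}}(F_N(\HKuniv)).
\]
This will follow from fullness/faithfulness properties of $F_N$ established in \cite{J2008,J2011} on the relevant subcategory of strongly equivariant modules, combined with the fact that $\HKuniv$ is itself a projective generator for an appropriately defined full subcategory (equivalently, $\HKuniv$ represents the functor of $G$-invariants on $\Dqstr{G}$).

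The crucial second step is to identify
\[
F_N(\HKuniv) \cong \DAHA{N}{N} \cdot \ee
\]
as a left $\DAHA{N}{N}$-module. Unwinding the definition of $F_N$, one starts from the presentation $\HKuniv = \Dq/\Dq C(\ad)$: the regular bimodule $\Dq$ maps under $F_N$ to the regular bimodule of $\DAHA{N}{N}$ (up to suitable specialization of parameters), and the quotient relations $C(\ad) = \{\ad(\ell)-\epsilon(\ell)\}$ match, under Schur-Weyl duality, with the symmetrization relations cut out by the finite Hecke idempotent $\ee$ sitting inside $\DAHA{N}{N}$. Once this identification is in place, one invokes the standard algebra isomorphism
\[
\End_{\DAHA{N}{N}}\bigl(\DAHA{N}{N} \cdot \ee\bigr) \;\cong\; (\ee \DAHA{N}{N} \ee)^{\op} \;=\; \ee \cdot \DAHA{N}{N}^{\op} \cdot \ee,
\]
where the evaluation map $\phi \mapsto \phi(\ee)$ reverses the order of composition, accounting for the appearance of $\DAHA{N}{N}^{\op}$ rather than $\DAHA{N}{N}$ itself.

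The principal obstacle is the second step: giving a clean, canonical identification of $F_N(\HKuniv)$ with $\DAHA{N}{N}\cdot\ee$. On the $\Dq$-side, the module $\HKuniv$ is defined by the intrinsic quotient by the adjoint ideal; on the DAHA side, the module $\DAHA{N}{N}\cdot\ee$ is defined by the Hecke symmetrizer $\ee$. Matching these two presentations requires carefully tracking how the quantum moment map $\ad\colon\Oq\to\Dq$ of \cite{VV} interacts with the Schur-Weyl picture, and how the two natural copies of the polynomial/symmetric function algebra (the center of $\Oq$ on one side, the polynomial subalgebra of DAHA on the other) are identified. Once this dictionary is set up, the isomorphism is canonical and Step~1 together with the idempotent calculation complete the proof.
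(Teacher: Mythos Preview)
Your plan is circular. Both of your key steps rely on facts that are \emph{consequences} of Theorem~\ref{mainthm:HKuniv}, not inputs to it.

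For Step~1, you appeal to ``fullness/faithfulness properties of $F_N$ established in \cite{J2008,J2011}''. No such result is available. What \cite{J2008} gives is a functor and an action of $\DAHA{N}{N}$ on $F_N(M)$; it does not establish that $F_N$ is fully faithful on any subcategory containing $\HKuniv$. In the paper, the fully faithful left adjoint in Corollary~\ref{cor:Dq to HN} and the isomorphism in Proposition~\ref{prop:HKgen} are \emph{deduced from} Theorem~\ref{mainthm:HKuniv} (together with the Morita equivalence of Proposition~\ref{prop:morita}, which is deferred to \cite{GJVY}). So you cannot invoke them to prove the theorem.

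For Step~2, the identification $F_N(\HKuniv)\cong \DAHA{N}{N}\cdot\ee$ is exactly Proposition~\ref{prop:End FN HK}, whose proof uses both Proposition~\ref{prop:morita} and Theorem~\ref{thm:SWsurjective}, the latter being one half of the proof of Theorem~\ref{mainthm:HKuniv}. Your sketch that ``$\Dq$ maps to the regular bimodule'' and ``$C(\ad)$ matches the relations cut out by $\ee$'' does not stand on its own: $\Dq$ is not strongly equivariant as a left module (the paper works with $\Dist(V^{\otimes N})$ instead), and matching the adjoint ideal with the Hecke symmetrizer is precisely the non-formal content that must be established, not assumed.

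The paper's actual argument is quite different from your outline. It constructs an explicit algebra map $\eSWeN\colon \eN\DAHA{N}{N}\eN \to \HKuniv^G$ and proves separately that it is injective and surjective. Injectivity (Theorem~\ref{thm:SWinj}) uses the $\SymY$-bimodule compatibility of Proposition~\ref{prop:Y and c} to reduce to generic characters $\chi$, where the induced module $\Ind_{\SH}^{\DN}(\chi\boxtimes\sgn)$ is irreducible. Surjectivity (Theorem~\ref{thm:SWsurjective}) is the hard part: one passes to polynomial forms $\HKuniv^+$ and $\DN^+$ with finitely generated graded pieces, checks the map is an isomorphism at $\q=1$ using the description of $\cO(\Comm_N)^G$ due to Gan--Ginzburg and Haiman, and then lifts via Nakayama's lemma. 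None of this machinery appears in your proposal, and there is no shortcut around it.
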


Here, $\ee \in \DAHA{N}{N}$ is the sign idempotent. We refer to $\ee \cdot \DAHA{N}{N}^{\op} \cdot \ee$ as the antispherical DAHA.  As a special case of the \defterm{shift isomorphism} (see Section \ref{sec:shift}), we can identify the righthand side of Theorem \ref{mainthm:HKuniv} with the algebra $\DqG{H}^{\SN}$ of $\SN$-invariant differential operators, and hence we interpret Theorem \ref{mainthm:HKuniv} as giving a quantum Levasseur-Stafford isomorphism:
\begin{equation}\label{eqn:LS}
\Dq/\!/G \cong \cD_\q(H)^{\SN}.    
\end{equation}

Recall that a key result of Hotta and Kashiwara was an identification of $\HKuniv^{\cl}(0)$ with the pushforward of the constant sheaf along the Springer resolution, implying in particular an isomorphism between its endomorphism algebra and the group algebra of the Weyl group.  The quantum analogue of $\HKuniv^{\cl}(0)$ is $\HKe$, where $\epsilon: \Oq\to \K$ denotes the counit character, corresponding classically to the identity element in $G$.  Our second main result may be regarded as a $\q$-deformed Springer correspondence \`a la Hotta--Kashiwara.

\begin{theorem}\label{mainthm:Springer}
	Let $G=\GLN$ or $\SLN$. We have a canonical isomorphism
	\[
	\End(\HKe) \cong \K[\SN]^{\op}.
	\]
 \end{theorem}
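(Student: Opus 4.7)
The strategy is to deduce Theorem~\ref{mainthm:Springer} from Theorem~\ref{mainthm:HKuniv} by analyzing the cyclic quotient defining $\HKe$. By Theorem~\ref{mainthm:HKuniv} together with the shift isomorphism~\eqref{eqn:LS}, we have $\End(\HKuniv) \cong \cD_\q(H)^{\SN}$, under which the subalgebra $\Oq^G \hookrightarrow \End(\HKuniv)$ is identified with $\cO_\q(H)^{\SN} \hookrightarrow \cD_\q(H)^{\SN}$, and the counit $\epsilon\colon \Oq^G \to \K$ becomes evaluation at the identity $e \in H$. Since $\HKe$ is obtained from $\HKuniv$ by quotienting by the augmentation elements $\{y-\epsilon(y) : y \in \Oq^G\}$ acting on the right via the inclusion $\Oq^G \hookrightarrow \End(\HKuniv)$, the algebra $\End(\HKe)$ is naturally described as an idealizer quotient: namely, the quotient of the idealizer of the left ideal $\cD_\q(H)^{\SN} \cdot (\cO_\q(H)^{\SN})_+$ inside $\cD_\q(H)^{\SN}$ by that same left ideal.

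The task thereby reduces to establishing the quantum Levasseur--Stafford-type identity
\[
\bigl\{a \in \cD_\q(H)^{\SN} : (\cO_\q(H)^{\SN})_+ \cdot a \subset \cD_\q(H)^{\SN} \cdot (\cO_\q(H)^{\SN})_+\bigr\} \big/ \cD_\q(H)^{\SN} \cdot (\cO_\q(H)^{\SN})_+ \cong \K[\SN]^{\op}.
\]
This is a $\q$-analog of the classical statement $\End(\HKuniv^{\cl}(0)) \cong \C[W]$ of Hotta--Kashiwara, originally proved by identifying $\HKuniv^{\cl}(0)$ with the Grothendieck--Springer pushforward and extracting the $W$-action from its geometric structure. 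In the quantum setting we do not have direct access to Springer theory; instead we perform the computation on the DAHA side via the Schur-Weyl functor $F_N$, transporting $\HKe$ to the quotient module of $\ee \cdot \DAHA{N}{N}$ obtained by specializing the symmetric central $\Oq^G$-elements (which go over under Schur-Weyl to a polynomial subalgebra of $\DAHA{N}{N}$) to the values prescribed by $\epsilon$. One then constructs an algebra map $\K[\SN]^{\op} \to \End(\HKe)$ via the finite Hecke subalgebra $\K[\SN] \subset \DAHA{N}{N}$ acting at the corresponding specialization, and establishes surjectivity and injectivity separately.

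The main obstacle is proving that no additional endomorphisms arise beyond the $\K[\SN]^{\op}$-image of the finite Hecke subalgebra: in other words, that the translation/Cherednik part of $\DAHA{N}{N}$ contributes nothing new after specialization. A natural route is a filtration and associated-graded argument: equip $\cD_\q(H)^{\SN}$ (or the antispherical DAHA) with a filtration whose associated graded recovers the classical setting, and use that the classical Hotta--Kashiwara theorem pins down the dimension of the endomorphism algebra to be exactly $|\SN|=N!$; then a rigidity/flatness argument forces the quantum algebra to have the same dimension, completing the proof. Alternatively, one may work entirely within $\DAHA{N}{N}$, exploiting Cherednik's theory of the polynomial representation together with the explicit action of symmetric functions of the Cherednik $Y$-operators at the counit specialization, to directly enumerate the endomorphisms of the relevant cyclic module.
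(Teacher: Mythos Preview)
Your reduction to an idealizer computation in $\cD_\q(H)^{\SN}$ is correct and matches the paper's setup (Corollary~\ref{cor:Dq to smash} and equation~\eqref{eq:End HK to End eDe}). However, your proposed completion has both a conceptual error and a genuine gap.

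The conceptual error: you propose constructing the map $\K[\SN]^{\op} \to \End(\HKe)$ ``via the finite Hecke subalgebra $\K[\SN] \subset \DAHA{N}{N}$''. But the finite Hecke generators $T_i$ act on $F_N(\HKe)$ as part of the $\DAHA{N}{N}$-module structure, not as $\DAHA{N}{N}$-module endomorphisms; they do not commute with the rest of the DAHA. The $\SN$-action the theorem asserts is \emph{not} the finite Hecke subalgebra, and no na\"ive specialization makes it so.

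The gap: neither of your completion strategies actually finishes. The degeneration argument would require showing that the $\cR$-module of endomorphisms is free of the expected rank over the local ring---which is exactly the content in question, not an input. And Cherednik's polynomial representation is not the module at hand; the module here is $\Ind_{\SymY}^{\ee\DN\ee}\{\qrho\}$, which behaves quite differently.

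The paper's key move, which you do not mention, is to pass by Morita equivalence from $\cD_\q(H)^{\SN}$ to the smash product $\cD_\q(H)\#\SN$. In the smash product (unlike in the DAHA or its spherical subalgebra), induced modules $\Ind_{\K[\Y]}^{\cD_\q(H)\#\SN}\atup$ from weights in the same $\AffSym$-orbit are isomorphic. Since $\qrho$ and the constant weight $1^N=(1,\ldots,1)$ lie in the same orbit, one is reduced to computing the endomorphism algebra of the module induced from $1^N$. But $1^N$ has full stabilizer $\SN$, so $\Ind_{\K[\Y]}^{\K[\Y]\#\SN}1^N \cong 1^N\boxtimes\K[\SN]$, and the $1^N$-weight space of the further induction to $\cD_\q(H)\#\SN$ is exactly this regular representation. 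Frobenius reciprocity then gives $\End \cong \End_{\K[\SN]}(\K[\SN]) = \K[\SN]^{\op}$ directly (Proposition~\ref{prop:End smash}). The paper also gives an alternative proof via intertwiners (Section~\ref{sec:End via intertwiners}), where the $\SN$-endomorphisms are realized by renormalized Cherednik intertwiners $\nu_{\trrho^{-1}w\trrho}$---again, not by the $T_i$.
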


\begin{theorem}\label{mainthm:decomp}\
Consider the resulting decomposition
\begin{align*} %\label{eq:HK decomp}
\HKe = \bigoplus_{\lambda} M_\lambda \boxtimes S^\lambda,
\end{align*}
as $\Dq$-$\K[\SN]$ bimodules
where the direct sum is indexed by partitions $\lambda$ of $N$ and $S^\lambda$ is the corresponding irreducible for $\SN$.  Then the factors $M_\lambda$  appearing are distinct indecomposable unipotent $\q$-character sheaves.
\end{theorem}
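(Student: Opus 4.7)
The plan is to deduce Theorem \ref{mainthm:decomp} as a fairly direct consequence of Theorem \ref{mainthm:Springer} via a standard double centralizer argument, together with a short check that the admissibility and unipotence properties descend to summands. First I would invoke semisimplicity of $\K[\SN]$ (Maschke, via Notation \ref{not:groundfieldquantum}) together with its Wedderburn decomposition $\K[\SN] \cong \bigoplus_{\lambda \vdash N} \End_\K(S^\lambda)$. Transporting the central idempotents $e_\lambda \in \K[\SN]$ through the isomorphism $\End_{\Dq}(\HKe) \cong \K[\SN]^{\op}$ of Theorem \ref{mainthm:Springer} produces a canonical $\Dq$-$\K[\SN]$-bimodule splitting $\HKe = \bigoplus_\lambda M_\lambda \boxtimes S^\lambda$, where $M_\lambda := \Hom_{\SN}(S^\lambda, \HKe)$. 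Each $M_\lambda$ is nonzero, since $e_\lambda$ is nonzero in $\End_{\Dq}(\HKe)$ by the Wedderburn decomposition.

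Next, the indecomposability of each $M_\lambda$ and the pairwise non-isomorphism of distinct $M_\lambda$ both fall out of matching $\Hom$-spaces against $\K[\SN]^{\op}$. Viewing $M_\mu \boxtimes S^\mu$ purely as a $\Dq$-module, it is isomorphic to $M_\mu^{\oplus d_\mu}$ with $d_\mu = \dim S^\mu$, and one has
\[
\Hom_{\Dq}(M_\lambda \boxtimes S^\lambda,\, M_\mu \boxtimes S^\mu) \cong \Hom_{\Dq}(M_\lambda, M_\mu) \otimes_\K \Hom_\K(S^\lambda, S^\mu).
\]
Summing over $\lambda,\mu$ and comparing with $\bigoplus_\lambda \End_\K(S^\lambda)$ forces $\End_{\Dq}(M_\lambda) = \K$ (so $M_\lambda$ is indecomposable, in fact absolutely so) and $\Hom_{\Dq}(M_\lambda, M_\mu) = 0$ for $\lambda \neq \mu$ (so the $M_\lambda$ are pairwise non-isomorphic).

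Finally, for the assertion that each $M_\lambda$ is a unipotent $\q$-character sheaf: by the very definition \eqref{eqn:HKchi} specialized to $\chi = \epsilon$, the subalgebra $\Oq^G$ acts on $\HKe$ through the counit character, which corresponds geometrically to the identity element in $H/W$. Hence $\HKe$ is a strongly equivariant admissible $\Dq$-module with unipotent central character, and these properties are inherited by every $\Dq$-submodule, in particular by each $M_\lambda$; the strongly equivariant structure is preserved because the bimodule decomposition is obtained from idempotents in $\End_{\Dq}(\HKe)$, an endomorphism algebra computed in the strongly equivariant category (so that the $\K[\SN]$-action automatically commutes with the quantum adjoint $G$-action). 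I do not anticipate a genuine obstacle beyond this bookkeeping; the real content of Theorem \ref{mainthm:decomp} lies in Theorem \ref{mainthm:Springer} itself, with the decomposition being an essentially formal consequence.
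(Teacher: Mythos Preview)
Your double-centralizer argument for the bimodule decomposition and for the indecomposability and pairwise non-isomorphism of the $M_\lambda$ is correct, and it takes a genuinely different route from the paper's. The paper does not deduce Theorem~\ref{mainthm:decomp} directly from Theorem~\ref{mainthm:Springer}; instead (Section~\ref{sec:proof of decomp}) it passes through the functor $\Upsilon$ to $\SmashDq\modu$, proves there (Proposition~\ref{prop:Springer-decomp}) that $\Ind_{\Y}^{\smashD} 1^N \cong \bigoplus_\lambda L_\lambda \otimes S^\lambda$ with each $L_\lambda$ \emph{simple} via an explicit weight-vector argument, and then transports this back via the fully faithful left adjoint $\Upsilon^L$; indecomposability and distinctness of $M_\lambda := \Upsilon^L(L_\lambda)$ follow from full faithfulness. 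Your approach is more elementary in that it avoids the shift isomorphism and the simplicity argument altogether; the paper's route buys the stronger statement that the $L_\lambda$ are simple on the smash side, which is what later (with the results of \cite{GJVY}) upgrades indecomposability of the $M_\lambda$ to simplicity.

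One correction on the unipotence paragraph: it is not true that $\cB^G$ acts on all of $\HKe$ through the single character $\epsilon$. The elements $c_k \in \cB^G$ are explicitly noted not to be central in $\Dq$ (see the remark preceding Proposition~\ref{prop:Y and c}), so the fact that $i_{\cB}(y) - \epsilon(y)$ lies in the defining left ideal only forces $i_{\cB}(y)$ to act as $\epsilon(y)$ on the cyclic generator $[1]$, not on an arbitrary class $[d]$. Admissibility and unipotence of $\HKe$ are true but need a further argument; the paper does not spell one out either, simply asserting in the introduction that the $\HKchi$ are the primary examples of admissible modules. So your conclusion stands, but the one-line justification you give for it does not.
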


 More generally we compute the endomorphism algebra of the modules $\HKchi$, under mild a combinatorial assumption (see Section \ref{sec:End HK via shift} for details) on the character $\chi$. It similarly leads to a decomposition as bimodules.

\begin{theorem}\label{mainthm:Springerchi} 
Let $\chi\in H/W$ and pick a lift $\bar \chi \in H$. 
Then $\bar \chi$ has stabilizer $\sigma W_J \sigma^{-1}$ for some $\sigma \in \AffSym$, where $W_J\subset W= \SN$ is a standard parabolic subgroup.    Suppose that $\chi$ is in transverse position, i.e., that $W \cap \sigma W_J \sigma^{-1} = \{ \id\} $. Then we have an isomorphism,
\[
\End(\HKchi) \cong \K[W_J]^{op}.
\]
 Consider the resulting decomposition
    \begin{align*} %\label{eq:HKchi decomp}
\HKchi = \bigoplus_{\lambdatup} M_\lambdatup \boxtimes S^\lambdatup,
\end{align*}
as $\Dq$-$W_J$ bimodules,
where the direct sum is indexed by multi-partitions $\lambdatup$  refining the composition corresponding to $J$ and of total size $N$, and   $S^{\lambdatup}$ is the corresponding irreducible for $W_J$.  Then the factors $M_{\lambdatup}$  appearing are distinct indecomposable $\q$-character sheaves.
\end{theorem}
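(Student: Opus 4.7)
The strategy is to extend the argument used for Theorems \ref{mainthm:Springer} and \ref{mainthm:decomp} to an arbitrary character $\chi$ in transverse position, by reducing via the Schur-Weyl functor $F_N$ to a computation in the antispherical DAHA $\ee \cdot \DAHA{N}{N} \cdot \ee$.

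First I would apply $F_N$ to the defining presentation \eqref{eqn:HKchi} of $\HKchi$ and, invoking Theorem \ref{mainthm:HKuniv} together with the shift isomorphism that identifies $\ee \cdot \DAHA{N}{N}^{\op} \cdot \ee$ with $\cD_\q(H)^{\SN}$, rewrite the image as the quotient of $\cD_\q(H)^{\SN}$ by the ideal generated by $\{\, y - \chi(y) : y \in \cO(H)^W \,\}$. After restricting scalars along $\cO(H)^W \to \K$ at $\chi$, the resulting module carries a natural weight-space decomposition with respect to the image of the commutative subalgebra $\cO(H) \subset \cD_\q(H)$.

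Next I would analyze this weight support. The weights appearing are the $\AffSym$-translates $w \cdot \bar\chi$, parameterized modulo $\SN$ on the left by the double-coset set $\SN \bs \AffSym / \sigma W_J \sigma^{-1}$. The transverse position assumption $\SN \cap \sigma W_J \sigma^{-1} = \{\id\}$ guarantees that each double coset is free on both sides, so that every weight space is one-dimensional and the residual right action of $\sigma W_J \sigma^{-1}$ descends to a free permutation action of $W_J$ on weights within a single $\SN$-orbit. Any endomorphism of $\HKchi$ must commute with the commutative subalgebra and hence preserve weight spaces; the remaining freedom is exactly this $W_J$-action on labels, giving the isomorphism $\End(\HKchi) \cong \K[W_J]^{\op}$.

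For the bimodule decomposition, semisimplicity of $\K[W_J]$ (we are in characteristic zero and $\q$ is not a root of unity) yields the block decomposition $\HKchi = \bigoplus_\lambdatup M_\lambdatup \boxtimes S^\lambdatup$, with $M_\lambdatup = \Hom_{W_J}(S^\lambdatup, \HKchi)$ indecomposable because $S^\lambdatup$ is a simple $\K[W_J]$-module. That each $M_\lambdatup$ is a $\q$-character sheaf follows from the admissibility of $\HKchi$, and distinctness as character sheaves can be read off from distinct weight supports within the $\AffSym$-orbit of $\bar\chi$, in the same spirit as Theorem \ref{mainthm:decomp}. The principal obstacle is the weight-space bookkeeping: verifying precisely that transverse position forces one-dimensional weight spaces and that no endomorphism can mix weight spaces lying in distinct $\SN$-orbits. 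Without transversality, the intersection $\SN \cap \sigma W_J \sigma^{-1}$ would contribute extra endomorphisms coming from internal symmetries of the weight labels, collapsing the expected isomorphism.
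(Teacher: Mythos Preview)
Your overall strategy matches the paper's: reduce via Corollary \ref{cor:Dq to smash} (the Schur--Weyl functor composed with the shift isomorphism) to a computation on the Hecke side, and then read off the endomorphism algebra from a weight-space analysis.  However, the core computation contains a genuine gap.

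You write that the image module ``carries a natural weight-space decomposition with respect to the image of the commutative subalgebra $\cO(H) \subset \cD_\q(H)$'' and then assert that transversality forces every weight space to be one-dimensional.  But $\cO(H)$ is not a subalgebra of $\cD_\q(H)^{\SN}$; only $\cO(H)^{\SN}=\SymY$ is.  If you stay in $\Coinvq$ you only have $\SymY$-weight spaces, and your double-coset bookkeeping does not directly control their dimensions.  The paper avoids this by passing one step further, through the Morita equivalence, to $\smashD = \SmashDq$.  There one works with genuine $\K[\Y]$-weight spaces of $\Ind_{\Y}^{\smashD}\atup \cong \Ind_{\Y}^{\smashD}\btup$, where $\btup=\sigma^{-1}\atup$ has $\Stab_{\AffSym}(\btup)=W_J$.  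The crucial point (see the proof of Theorem \ref{thm:decomp-chi}) is that the $\btup$-weight space is \emph{not} one-dimensional: it is exactly $\btup\boxtimes \K[W_J]$, of dimension $|W_J|$.  Frobenius reciprocity then gives
\[
\End_{\smashD}(\Ind_{\Y}^{\smashD}\btup)\cong \Hom_{\smashY}(\btup\boxtimes\K[W_J],\,\btup\boxtimes\K[W_J])\cong \K[W_J]^{\op},
\]
and this is where the group algebra actually arises --- not from a permutation action on one-dimensional weight labels.

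Consequently your mechanism ``residual right action of $W_J$ on weights within a single $\SN$-orbit'' does not produce the endomorphisms.  Transversality is used differently: it guarantees (cf.\ Remark \ref{rem:Ind Y not ssl}) that $\Ind_{\Y}^{\smashY}\atup$ is $\Y$-semisimple and identifies with $\Ind_{\SymS}^{\smashY}\{\atup\}\boxtimes\trivial$, so that the $\btup$-weight space really sits inside the induced module as the regular $W_J$-representation.  Without transversality this fails and one picks up nilpotent endomorphisms, as in Example \ref{ex:nilpotent end}.  The decomposition into simples then comes from decomposing $\K[W_J]$ and showing each $\Ind_{\smashY_J}^{\smashD}\btup\boxtimes S^{\lambdatup}$ is simple by the same weight-vector argument; applying the fully faithful left adjoint $\Upsilon^L$ yields distinct indecomposable $M_{\lambdatup}$.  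Your concluding remarks on indecomposability and distinctness are in the right spirit but rest on the weight computation that needs to be corrected first.
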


\begin{remark}
Let us emphasize that the category $\Dqstr{G}$ is not semi-simple, and so the semi-simplicity of endomorphism algebras as in Theorems \ref{mainthm:Springer} and \ref{mainthm:Springerchi} is not automatic.  In Example \ref{ex:nilpotent end} we illustrate a $\chi$ not in transverse position, such that  $\End(\HKchi)$ is not semisimple.
\end{remark}

\subsection{Results of \cite{GJVY} pertaining to $\q$-character theory}
In our forthcoming paper \cite{GJVY}, with Haiping Yang, we prove the following results, which we state here for context.\footnote{We have chosen to emphasize the applications to skein theory in \cite{GJVY} and the applications to $\q$-character theory in the present paper.}

\begin{theorem}\label{thm:HK generates (GJV)}
 Let $G=\GLN$. The object $\HKuniv$ is a (compact, projective) generator of $\Dqstr{G}$. In particular, there is an equivalence of categories
    \[
    \Dqstr{G} \simeq \ee \cdot \DN \cdot \ee\modu \simeq \cD_q(H)^W\modu
    \]
\end{theorem}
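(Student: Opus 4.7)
The plan is to reduce the statement to Morita theory: if $\HKuniv$ is a compact projective generator of $\Dqstr{G}$, then $\Hom_{\Dqstr{G}}(\HKuniv,-)$ induces an equivalence
\[
\Dqstr{G} \simeq \End(\HKuniv)^{\op}\modu,
\]
and Theorem \ref{mainthm:HKuniv} together with the shift isomorphism \eqref{eqn:LS} identify this target with both $\ee\cdot\DN\cdot\ee\modu$ and $\cD_q(H)^W\modu$. So the problem reduces to three properties of $\HKuniv$: compactness, projectivity, and generation.

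Compactness is immediate from cyclicity of $\HKuniv$ as a $\Dq$-module. For projectivity, the presentation $\HKuniv = \Dq\otimes_{\Oq}\K$, with $\Oq\to\Dq$ the quantum moment map $\ad$ and $\Oq\to\K$ the counit $\epsilon$, yields a natural adjunction identification
\[
\Hom_{\Dqstr{G}}(\HKuniv, M) \cong M^G,
\]
where $M^G$ denotes the quantum $G$-invariants (the coinvariants of the $\Oq$-coaction underlying the strong equivariance structure). Since $G$ is connected reductive, the category of $\Oq$-comodules is semisimple by quantum Peter--Weyl, so the invariants functor is exact and $\HKuniv$ is projective.

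The main obstacle is the generator property: every nonzero $M\in\Dqstr{G}$ must satisfy $M^G\neq 0$. The natural approach is via the Schur-Weyl functor $F_N\colon \Dqstr{G}\to \DN\modu$ of \cite{J2008}. One first identifies $F_N(\HKuniv)$ as a $\DN$-module containing $\DN\cdot\ee$ as a direct summand, so that $\ee\cdot F_N(\HKuniv)\cong \ee\cdot\DN\cdot\ee$ --- the Morita dual algebra from Theorem \ref{mainthm:HKuniv}, and a generator of the antispherical DAHA's module category. Provided $F_N$ is conservative on $\Dqstr{G}$ and $\ee$ detects nonzero objects in its essential image, the vanishing of $M^G$ forces $\ee\cdot F_N(M)=0$, hence $F_N(M)=0$, hence $M=0$.

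The hardest step will be verifying this conservativity of Schur-Weyl on \emph{all} of $\Dqstr{G}$ rather than on an \emph{a priori} restricted subcategory. A likely strategy is to decompose $\Dqstr{G}$ by central character --- using the action of $\Oq^G\cong \cO(H)^W$ through the moment map --- and reduce block-by-block to a question about $\DN$-modules, where explicit computations using the Bernstein presentation of the DAHA, combined with the combinatorial control over weights afforded by the shift isomorphism and by the transverse position analysis underlying Theorems \ref{mainthm:Springer}--\ref{mainthm:Springerchi}, render the problem tractable. Once conservativity is secured, Morita theory closes the argument.
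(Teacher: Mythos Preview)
The paper does not prove this theorem. It is stated in Section~1.3 as a result of the forthcoming paper \cite{GJVY}, explicitly introduced with ``we prove the following results, which we state here for context.'' The present paper only establishes the ingredients you list under ``compactness'' and ``projectivity'' (your projectivity argument is exactly Proposition~\ref{prop: Hom HK gives invariants}), together with the endomorphism computation $\End(\HKuniv)\cong \ee\cdot\DN\cdot\ee$ (Theorem~\ref{mainthm:HKuniv}). The generator statement---equivalently, conservativity of $\ee F_N$ on all of $\Dqstr{G}$---is precisely what is deferred; see also the remark following Corollary~\ref{cor:Dq to HN}.

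Your outline correctly isolates conservativity of $F_N$ as the crux, and this is indeed the approach taken in \cite{GJVY}. But your proposal is not a proof: you explicitly label conservativity ``the hardest step'' and then offer only a ``likely strategy.'' That strategy, as written, has problems. First, the phrase ``action of $\Oq^G$ through the moment map'' is confused: on a strongly equivariant module the moment-map action of $\Oq$ is by definition the $\Uq$-action via $\Rosso$, and the relevant central/infinitesimal character in this paper comes instead from $\cB^G\subset\Dq$ (Definition~\ref{def:central}). Second, decomposing by central character and invoking ``combinatorial control'' from the transverse-position analysis does not by itself show that an arbitrary nonzero strongly equivariant module has nonzero invariants; the results of Theorems~\ref{mainthm:Springer}--\ref{mainthm:Springerchi} concern specific quotients of $\HKuniv$, not arbitrary objects, and in particular say nothing about objects outside the subcategory $\HKuniv$ already generates. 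Closing that gap is the actual content of \cite{GJVY}, and your sketch does not supply it.
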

In fact, we also prove that $\ee \cdot \DN$ defines a Morita equivalence between $\DN$ and $\ee \cdot \DN \cdot \ee$, so one may replace $\ee \cdot \DN \cdot \ee$ by $\DN$ in Theorem \ref{thm:HK generates (GJV)}. Using a similar, well-known Morita equivalence for the symmetric idempotent $\ee_+$, one may replace $\cD_q(H)^{\SN}$ by $\cD_q(H)\# \SN$. 
It is somewhat more involved to state the $\SLN$ analogue of these results. We will give a full statement in \cite{GJVY}, and state here the case of $\SL_{2}$.

\begin{theorem}\label{thm:DqstrSL2}
    Let $G=\SL_2$. There is an equivalence of categories
    \[
    \Dqstr{G} = \cD_q(H)^{\Sm{2}}\modu \oplus \Vect_\K^{\oplus 4}
    \]
\end{theorem}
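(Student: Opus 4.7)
The plan is to parallel the $GL_N$ proof of Theorem~\ref{thm:HK generates (GJV)}, passing via Schur-Weyl to modules over a suitable antispherical $SL$-DAHA, and then decomposing that algebra explicitly for $N=2$. The novelty over the $GL_N$ case is that the Hotta-Kashiwara module no longer generates $\Dqstr{SL_2}$, due to the presence of the finite center $\mu_2\subset SL_2$; the missing simples will supply the four $\Vect_\K$ summands.

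First, I would extract the continuous block. Using the $SL_2$ case of Theorem~\ref{mainthm:HKuniv} together with Morita equivalences for the sign and symmetric idempotents inside $\HS$, the Hotta-Kashiwara module $\HKuniv$ gives a fully faithful embedding $\cD_q(H)^{\Sm{2}}\modu \hookrightarrow \Dqstr{SL_2}$ whose image is a block that I call the \emph{principal} block. A useful auxiliary step here is to compare $\Dqstr{SL_2}$ with $\Dqstr{GL_2}$ along the isogeny $SL_2\times\mathbb{G}_m \twoheadrightarrow GL_2$ with kernel $\mu_2$: this lets me transport the $GL_2$ equivalence of Theorem~\ref{thm:HK generates (GJV)} to confirm that the $\HKuniv$-generated subcategory of $\Dqstr{SL_2}$ is exactly $\cD_q(H)^{\Sm{2}}\modu$, after accounting for the finite central twist.

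Second, I would construct and classify the four isolated rigid simples. These correspond to ``cuspidal'' $\q$-character sheaves on $SL_2$ not detected by $\HKuniv$, analogous to Lusztig's cuspidal character sheaves in the classical setting. Concrete candidates can be built from representations of $U_\q(\mathfrak{sl}_2)$ with degenerate central character, or by parabolic induction from characters of $H$ whose stabilizer fails the transverse-position hypothesis of Theorem~\ref{mainthm:Springerchi}; the counting of four should arise from combining the two characters of $\mu_2$ with the nontrivial component-group data at the subregular locus.

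The hard part will be exhaustion: showing that every indecomposable of $\Dqstr{SL_2}$ is either in the image of the principal embedding or isomorphic to one of the four discrete simples, and that each such simple has $\End = \K$ with $\Ext^{\geq 1}$ vanishing (so that the isolated block is $\Vect_\K$ rather than a nontrivial local algebra). This reduces to showing that the antispherical $SL$-DAHA decomposes as an algebra direct sum $\cD_q(H)^{\Sm{2}} \oplus \K^{\oplus 4}$; I would attack this by a spectral analysis of the Dunkl-Cherednik operators on standard modules, separating the generic central characters (which sweep out the principal block, recovered via the shift isomorphism) from a finite list of degenerate characters producing finite-dimensional blocks, and then checking by direct calculation that this list has exactly four entries for $N=2$.
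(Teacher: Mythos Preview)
The paper does not prove this theorem; it is explicitly announced as a result of the forthcoming companion paper \cite{GJVY} (see the heading of Section~1.3), so there is no in-paper proof to compare against.

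That said, your outline has a genuine gap in the exhaustion step. You propose to reduce everything to showing that the antispherical $SL$-DAHA decomposes as an algebra into $\cD_q(H)^{\Sm{2}} \oplus \K^{\oplus 4}$. But by Theorem~\ref{mainthm:HKuniv} the antispherical $SL$-DAHA \emph{is} $\End(\HKuniv)^{\op}$, so its module category is exactly the full subcategory of $\Dqstr{SL_2}$ generated by $\HKuniv$ (Corollary~\ref{cor:Dq to HN}). You have already, correctly, identified this subcategory with $\cD_q(H)^{\Sm{2}}\modu$ in your first step; there are no extra $\K^{\oplus 4}$ summands hiding in the antispherical DAHA. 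The four cuspidal simples are precisely the objects $M$ with $\Hom(\HKuniv, M) = M^G = 0$ (Proposition~\ref{prop: Hom HK gives invariants}), hence they are invisible to $\ee F_N$ and to any spectral analysis of DAHA standard modules. The remark following Corollary~\ref{cor:Dq to HN} says exactly this: $\ee F_N$ is not conservative for $SL_N$. So the reduction you propose is both false as an algebra statement and, even if it held, would establish the wrong thing.

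To prove exhaustion you must work directly in $\Dqstr{SL_2}$ rather than in the DAHA: show that the subcategory generated by $\HKuniv$ is a \emph{direct summand} (not merely a full subcategory), and analyze its complement by other means. The paper's own hints (Section~\ref{sec:quantum springer}) point toward the central-character decomposition under $Z(SL_2)^\vee \cong \Z/2\Z$, with the cuspidals supported at the nontrivial character, and toward probing that block using the objects $\Dist(W)$ of Remark~\ref{rem:Dist} for odd-dimensional $W$, or the functors $F_n$ for $n$ not divisible by $N$.
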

\begin{remark}
    The four simple objects appearing above may be understood as cuspidal $\q$-character sheaves (see the discussion in Section \ref{sec:quantum springer}). 
\end{remark}

As a consequence of Theorem \ref{thm:HK generates (GJV)}, we are able to show that the modules $M_\lambda$ and $M_{\lambdatup}$ are irreducible, not only indecomposable, and that in the $\GLN$ case they exhaust all irreducible unipotent $\q$-character sheaves.

\subsection{Characters of classical and quantum Harish-Chandra bimodules} 
Replacing $\fg$ with $G$, one may also consider the category $\cD(G)\modu^{G}$ of strongly equivariant $\cD(G)$-modules. 
The similarly defined category of strongly equivariant $\cD(G)$-modules may be identified with the categorical trace of the monoidal category of Harish-Chandra bimodules $\HC(G)$, that is, $\cU(\fg)$-bimodules with an action of $G$ integrating the diagonal $\fg$-action (see \cite{BZGN} for a proof in the derived setting). Modules for $\HC(G)$ are sometimes known as categorical $G$-representations (more formally, these are the weak invariants of a categorical $G$-representation, namely a module category for the categorical convolution algebra $(\cD(G), \ast)$; see \cite{BZGO} for further details). This leads to an interpretation of strongly equivariant $\cD(G)$-modules as characters of categorical representations.

The formalism of factorization homology gives us a satisfying parallel to this classical story in the quantum setting.  Whereas the category  of strongly equivariant $\Dq$-modules (``quantum character sheaves") is attached by factorization homology to the two-torus, the same theory attaches to the circle a monoidal category $\HCq(G)$ of \emph{quantum Harish-Chandra} bimodules (see Section \cite{Safronov-qmm-paper} for discussion).

The fact that characters of quantum Harish-Chandra bimodules are indeed strongly equivariant $\Dq$-modules is therefore simply a formal consequence of the fact that the two-torus is obtained from the circle by taking the Cartesian product with another circle, and that in topological field theory crossing with a circle produces the appropriate form of Hochschild homology, i.e. the universal receptacle for characters.  For example, one may interpret the module $\HKchi$ as coming from the solid torus $S^1\times D^2$, with a line defect along $S^1\times \{0\}$ labeling the conjugacy class.  This is precisely the character $\Dq$-module of the object of $\HCq$ which is $\Oq \otimes_{\Oq^G}\chi$, the quantization of the conjugacy class determined by $\chi$.

This approach to quantum character theory strongly echoes the ideas of the classical character field theory papers \cite{BZN} by Ben-Zvi and Nadler, and subsequently %quantized 
in their work \cite{BZGN} with the first author.

\subsection{Quantum Springer theory}\label{sec:quantum springer}
Recall that the main objects of study in this paper, namely the category $\Dqstr{G}$ and $\HKuniv$, may be defined for any connected reductive group $G$ (together with the additional data required to defined the corresponding ribbon category of representations of the quantum group). Although our techniques in this paper are rooted in the type $A$-specific philosophy of Schur-Weyl duality, combining our results with the analogous results in the $\cD$-module setting leads us to the following natural conjecture.

\begin{conjecture}
    The isomorphism (\ref{eqn:LS}) and the results of Theorems \ref{mainthm:Springer} and \ref{mainthm:Springerchi}
     all hold for an arbitrary connected reductive group $G$ (replacing the symmetric group $\SN$ with the Weyl group $W$ of $G$). 
\end{conjecture}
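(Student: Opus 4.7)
Our type-$A$ arguments rely crucially on the Schur--Weyl functor $F_N$ and on properties of the rank-$N$ DAHA $\DAHA{N}{N}$, neither of which is available verbatim in other Lie types. The plan is to substitute two broader tools: (i) Cherednik's DAHA $\mathbb{H}(R)$ attached to the root system $R$ of $G$, together with its spherical/antispherical subalgebras, which for generic $\q$ already encodes $\cD_\q(H)^W$; and (ii) a flat deformation argument connecting $\Dq$ to the classical $\cD(\fg)$ so that the theorems of Levasseur--Stafford and Hotta--Kashiwara can be transported to the quantum side. The factorization-homology picture sketched in the introduction suggests that both the category $\Dqstr{G}$ and the bimodule $\HKuniv$ survive the general-type setting without modification, so the content of the conjecture is really about computing invariants of objects that already exist.

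\textbf{Step 1 (Levasseur--Stafford).} I would construct a radial-parts homomorphism $\Dq^G \to \cD_\q(H)^W$ extending the known quantum Harish-Chandra isomorphism $\Oq^G \cong \cO_\q(H)^W$, for example by composing the quantum moment map $\ad\colon\Oq\to\Dq$ with a Cherednik-type embedding. To prove bijectivity, pass to the associated graded with respect to the PBW filtration on $\Dq$: the graded map recovers the classical multiplicative Levasseur--Stafford isomorphism (proved by Jordan, Etingof--Kirillov, and others for general reductive $G$). Flatness of $\Dq$ as a deformation of $\cO(T^*G_{\mathrm{mult}})$, together with faithful flatness of the $G$-invariants (using Weyl's integration / complete reducibility in the generic $\q$ setting), then lifts the classical isomorphism to the quantum level.

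\textbf{Step 2 (Endomorphism algebras).} Given the isomorphism $\Dq /\!/ G \cong \cD_\q(H)^W$, the endomorphism algebra of $\HKchi$ becomes
\[
\End(\HKchi) \cong \cD_\q(H)^W \big/ \bigl(\cD_\q(H)^W \cdot (y - \chi(y))_{y \in \cO_\q(H)^W}\bigr).
\]
This is a localization-type quotient supported set-theoretically at the $W$-orbit of a lift $\overline{\chi} \in H$. Applying a Mackey-style decomposition of $\cD_\q(H)^W$ along the stratification of $H$ by stabilizer type --- using Knop--Joseph-type arguments, or equivalently passing through the spherical subalgebra of Cherednik's DAHA localized at $\chi$ --- collapses the quotient to $\K[W_{\overline{\chi}}]^{\op}$ whenever the transversality hypothesis $W \cap \sigma W_J\sigma^{-1} = \{\id\}$ rules out extra Hecke-type relations in the completion. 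Specialising to $\chi = \epsilon$ (identity), the stabilizer is all of $W$ and one recovers $\End(\HKe) \cong \K[W]^{\op}$. The bimodule decomposition then follows as in the type-$A$ argument: semisimplicity of $\K[W_J]$ produces the isotypic splitting, and the indecomposability of the factors $M_{\lambdatup}$ as $\q$-character sheaves is extracted from the Mackey decomposition together with an end-of-proof argument using the locally finite $\cO_\q(H)^W$-action.

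\textbf{Main obstacle.} The hardest step is Step 1: outside type $A$ we cannot invoke the Morita equivalence $\Dqstr{G} \simeq \ee\cdot\DN\cdot\ee\modu$, so we have no direct categorical bridge to a known algebra whose spherical subalgebra is $\cD_\q(H)^W$. One must either construct the radial-parts map by hand and control its kernel and cokernel (which amounts to a non-trivial quantum version of Wallach's classical argument), or else establish generic flatness of $\End(\HKuniv)$ as a module over $\K[\q^{\pm 1}]$ and upper-semicontinuously transport the classical Levasseur--Stafford isomorphism. Either route requires a sharp PBW-type theorem for $\Dq^G$ that is not, to my knowledge, fully in the literature for general $G$; supplying this theorem is the principal analytic input needed to settle the conjecture.
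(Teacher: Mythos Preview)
The statement you are attempting to prove is a \emph{conjecture} in the paper, not a theorem: the paper offers no proof of it. Immediately after stating the conjecture, the authors write that proving it ``would require a deeper understanding of $\q$-deformed Springer theory, as outside of type $A$ we lack elliptic Schur-Weyl duality,'' and they go on to suggest that the right approach is to develop parabolic induction and restriction functors for $\Dqstr{G}$ via factorization homology (using the domain wall $\Repq B$ on $T^2\times\{\tfrac12\}$ inside $T^2\times I$). So there is nothing to compare your argument to at the level of a proof.

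That said, your proposal is not a proof either, and you correctly identify this yourself in the ``Main obstacle'' paragraph. Step~1 is the entire difficulty: you need a radial-parts map $\Dq^G\to\cD_\q(H)^W$ and a proof that it descends to an isomorphism on Hamiltonian reduction, and you propose to get this by a PBW/associated-graded argument transporting the classical Levasseur--Stafford result. But you do not actually carry this out, and the ingredients you invoke (a sharp PBW theorem for $\Dq^G$ in general type, flatness of the invariants, control of the kernel of the radial-parts map) are precisely the content of the conjecture, not established facts. Step~2 is also a sketch rather than an argument: the ``Mackey-style decomposition'' and ``Knop--Joseph-type arguments'' you allude to would need to be made precise in the $\q$-deformed setting, and the transversality hypothesis has to be shown to suffice for semisimplicity of the endomorphism ring (in type $A$ this required real work, cf.\ Example~\ref{ex:nilpotent end}).

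In short: your strategy is a plausible alternative to the paper's suggested route through quantum Springer theory, and the two approaches are genuinely different (deformation/radial-parts versus categorical parabolic induction). But neither the paper nor your proposal constitutes a proof; both are programs, and yours would need the missing PBW and flatness inputs you flag before it could be completed.
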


Proving these conjectures would require a deeper understanding of $\q$-deformed Springer theory, as outside of type $A$ we lack  elliptic Schur-Weyl duality, hence the reformulation via intertwiners for the double affine Hecke algebra, which are our main tools in proving Theorem \ref{mainthm:Springer}. 

We believe that the correct approach to these conjectures is to develop a ``quantum Springer theory", which is to say a theory of parabolic induction and restriction functors for categories of strongly equivariant $\Dq$-modules.  The required functors should arise in the factorization homology framework by considering the domain wall given by $\Repq B$, placed at the surface $T^2\times \{\frac12\}$ inside the 3-manifold $T^2\times I$. We plan to return to this construction in future work. 

Given such a formalism, one can the define the notion of a \emph{cuspidal} $\Dq$-module: one killed by parabolic restriction for every proper parabolic subgroup. The generalization of Theorems \ref{thm:HK generates (GJV)} and \ref{thm:DqstrSL2} for other groups $G$ requires the following more general notion: a \emph{$\q$-cuspidal datum} is defined to be a $G$-conjugacy class of pairs $(L,C)$, where $L$ is an elliptic-pseudo Levi subgroup of $G$ (in the sense of \cite{FratilaGunninghamLi}) and $C$ is a simple unipotent cuspidal $\DqG{L}$-module. For example, if $L=C$, a $\q$-cuspidal datum is the same thing as a unipotent simple cuspidal $\Dq$-module. At the other extreme, for any $G$, there is a canonical cuspidal datum $Spr=(H,\OqG{H})$, where $H$ is the maximal torus of $G$, and $\OqG{H}$ is the canonical module. 
The following statement is the $\q$-analogue of Theorem A of \cite{G}. 

\begin{conjecture}\label{conj:q-generalized Springer}
    For each connected reductive group $G$, there is a finite block decomposition of the category $\Dqstr{G}$ indexed by the set of $\q$-cuspidal data. Moreover, the block corresponding to a cuspidal datum $(L,C)$ is equivalent to the category of modules for the smash product of $\cD_\q(Z(L)^\circ)$ by a twisted group algebra of a certain finite group. 
\end{conjecture}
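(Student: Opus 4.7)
The plan is to develop a quantum analogue of the generalized Springer theory of \cite{G}, mirroring the structure of that proof while replacing $\cD$-modules by $\Dq$-modules throughout. The central technical device is a pair of adjoint parabolic (or more generally, pseudo-parabolic) induction and restriction functors
\[
\Ind_L^G \colon \Dqstr{L} \rightleftarrows \Dqstr{G} \colon \Res_L^G,
\]
one pair for each elliptic-pseudo Levi $L \subset G$, constructed by factorization homology from the $\Repq B$ domain wall along $T^2\times\{\tfrac12\} \subset T^2\times I$ indicated in Section \ref{sec:quantum springer}. Given these functors, the conjecture reduces to a quantum Mackey formula together with an induction on semisimple rank, exactly as in \cite{G}.

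First one would construct $\Ind_L^G$ and $\Res_L^G$ and establish their basic formal properties: biadjointness (up to a shift or twist), transitivity in stages, and preservation of strong equivariance and of the quantum moment map $\ad\colon \Oq \to \Dq$ of \cite{VV}. The crucial structural statement is a quantum Mackey formula of the schematic form
\[
\Res_L^G \circ \Ind_M^G \;\simeq\; \bigoplus_{w \in W_L\backslash W/W_M} \Ind_{L \cap {}^wM}^L \circ \,{}^w(-) \circ \Res_{L^w \cap M}^M,
\]
which should fall out of a stratified cobordism decomposition of the relevant factorization homology. In particular, the case $L = M$ should yield that $\Ind_L^G$ is fully faithful on cuspidal objects and that the complementary summands involve only strictly smaller pseudo-Levis.

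Granting this, one declares a simple object of $\Dqstr{G}$ to be \emph{cuspidal} if $\Res_L^G$ annihilates it for every proper pseudo-parabolic, and uses induction on semisimple rank to show that every simple object is a summand of some $\Ind_L^G C$ with $(L,C)$ a $\q$-cuspidal datum.  To extract the block decomposition one computes $\End(\Ind_L^G C)$: by the Mackey formula and cuspidality of $C$, all terms with $w \notin N_G(L,C)/L$ vanish, and the surviving terms assemble into a twisted group algebra $\K^{\alpha}[N_G(L,C)/L]$ acting on $\End(C)$. Since a simple unipotent cuspidal object $C$ on $L$ is expected to have $\End(C) \cong \cD_\q(Z(L)^\circ)$, passage to modules via $\Hom(\Ind_L^G C, -)$ identifies the block with modules for the smash product $\cD_\q(Z(L)^\circ) \# \K^{\alpha}[N_G(L,C)/L]$, and orthogonality between distinct cuspidal data is immediate from Mackey and Frobenius reciprocity.

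The principal obstacle is the very first step: rigorously defining these parabolic functors and proving the Mackey formula require a careful development of factorization homology for stratified surfaces with $\Repq B$ defects, together with a verification of compatibility with the quantum moment map of \cite{VV}. This is precisely the program deferred to future work at the end of Section \ref{sec:quantum springer}, and is the reason we state only a conjecture here. A secondary difficulty, sharper in types other than $A$, is the explicit classification of simple unipotent cuspidal $\q$-character sheaves on each elliptic-pseudo Levi of $G$: the Schur--Weyl methods of the present paper that handle the Springer datum in type $A$ via Theorems \ref{mainthm:HKuniv}--\ref{mainthm:Springerchi} are not available in general, and matching the classification against its classical counterpart \cite{FratilaGunninghamLi} will likely require substantial new input.
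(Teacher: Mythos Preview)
The statement is labeled a \emph{Conjecture} in the paper and is not proved there; the paper offers no argument beyond the surrounding discussion in Section~\ref{sec:quantum springer}. Your proposal is not a proof but a plan of attack, and you correctly flag this yourself (``the reason we state only a conjecture here''). That plan is faithful to what the paper sketches: the authors explicitly say they believe the right approach is to develop parabolic induction/restriction functors on $\Dqstr{G}$ via the $\Repq B$ domain wall in factorization homology, and that they defer this to future work. Your Mackey-formula-plus-induction-on-rank outline is the natural quantum transcription of the argument in \cite{G}, which is precisely the classical result the conjecture is meant to $\q$-deform.

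So there is no discrepancy to report: the paper has no proof, and your outline matches the strategy the authors themselves indicate. The genuine gaps you identify---constructing the functors rigorously, establishing the quantum Mackey formula, and classifying cuspidals outside type $A$---are exactly the missing ingredients, and the paper does not supply them either.
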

In particular, there is always at least one block in the above decomposition, namely the \emph{Springer block}, corresponding to the Springer cuspidal datum. We additionally conjecture that this block is generated by the universal Hotta-Kashiwara module $\HKuniv$. According to Theorem \ref{thm:HK generates (GJV)}, this is the only block in the case $G=GL_N$. On the other hand, by
Theorem \ref{thm:DqstrSL2}, there are five blocks: the Springer block, and four blocks corresponding to the four distinct simple cuspidal modules. More generally, we expect the cuspidal $\DqG{SL_N}$-modules to be those whose central character $\overline{n} \in \Z/N\Z = Z(SL_N)^\vee$ satisfies $\gcd(n,N) = 1$. Applying the Schur-Weyl functor $F_n$ to such modules produces finite-dimensional modules for the corresponding DAHA in parallel with the results of \cite{CEE} in the rational case.

Finally, we note that the category of strongly equivariant $\Dq$-modules is the category appearing on the $B$-side in the quantum Betti geometric Langlands conjectures for a 2-torus \cite{BZN18}. One may also wonder how this category is related to the corresponding category in the more traditional de Rham quantum geometric Langlands program. In forthcoming work of the first author, an analogous generalized Springer decomposition is proved for generically twisted $\cD$-modules on the moduli stack $\operatorname{Bun}_G(E)$ of $G$-bundles on an elliptic curve $E$ (see \cite{BZN15} and \cite{FratilaGunninghamLi} for some context). In particular, there is an analogous set $\cuspell{G}$ of \emph{elliptic cuspidal data} indexing the blocks. Although the categorical structure of the individual blocks is quite different in the elliptic and the $\q$-cases, one still expects the following comparison.

\begin{conjecture}
    There is a natural bijection between the set of simple cuspidal $\Dq$-modules and simple cuspidal generically twisted $\cD$-modules on $\operatorname{Bun}_G^{ss,0}(E)$.  
\end{conjecture}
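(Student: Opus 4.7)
The plan is to reduce this bijection to a matching of data attached to each side. Both $\cuspq{G}$ and $\cuspell{G}$ are, or should be, indexed by $G$-conjugacy classes of pairs $(L,C)$ in which $L$ is an elliptic-pseudo-Levi subgroup of $G$ in the Fratila--Gunningham--Li sense \cite{FratilaGunninghamLi} and $C$ is a simple cuspidal object at the level of $L$. Since the notion of elliptic-pseudo-Levi is purely group-theoretic and common to both settings, the ``support'' data $L$ already lines up, and the content of the conjecture reduces to matching simple cuspidal objects inside $\DqG{L}\modu$ with simple cuspidal generically twisted $\cD$-modules on $\Bun_L^{ss,0}(E)$ for each such $L$ separately.

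First I would verify the statement in the cases where both sides are directly accessible. For $G=\SLN$, Conjecture \ref{conj:q-generalized Springer} together with the heuristic given after it predicts $\varphi(N)$ simple cuspidal $\Dq$-modules, indexed by central characters $\bar n \in \Z/N\Z$ with $\gcd(n,N)=1$; Theorem \ref{thm:DqstrSL2} exhibits the four expected such modules in the case $N=2$. On the elliptic side, the Looijenga-type parametrization of elliptic-pseudo-Levi data for $\SLN$-bundles on $E$ used in \cite{FratilaGunninghamLi} similarly produces $\varphi(N)$ simple cuspidal generically twisted $\cD$-modules supported on the semistable locus of degree $0$. Matching the explicit parameters on both sides in type $A$, and in type $A$-like reductions of other classical groups, should give the bijection in a family of examples and suggest a uniform description.

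To make the correspondence canonical in general, I would route it through the (quantum) Betti geometric Langlands framework for the $2$-torus \cite{BZN18}: the $\q$-character sheaves appear naturally on one side of this duality, while the twisted $\cD$-modules on $\operatorname{Bun}_G^{ss,0}(E)$ appear in the companion de Rham/elliptic Langlands program. A duality statement at the level of cuspidal data should then produce the desired natural bijection, with Langlands duality acting on the elliptic-pseudo-Levi component. The main obstacle is two-fold: first, Conjecture \ref{conj:q-generalized Springer} itself is not established outside of type $A$, so the very parametrization of cuspidal $\Dq$-modules by cuspidal data is conjectural; second, the Langlands-type duality exchanging the $\q$- and elliptic sides of the character theory has not been made precise beyond low-rank examples. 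I would therefore proceed type by type, using Theorems \ref{mainthm:decomp}, \ref{thm:HK generates (GJV)}, and the forthcoming paper \cite{GJVY} to control the type $A$ picture explicitly, and using parabolic induction/restriction (developed in parallel on both sides along the lines sketched in Section \ref{sec:quantum springer}) to reduce the general conjecture to the elliptic-pseudo-Levi rank-zero case.
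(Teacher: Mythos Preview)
The paper does not prove this statement: it is explicitly stated as a \emph{conjecture} and left open. There is therefore no proof in the paper to compare your proposal against.

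Your write-up is not a proof either, and you are aware of this: you explicitly flag that Conjecture~\ref{conj:q-generalized Springer} is unproved outside type $A$, that the Langlands-type duality you would invoke ``has not been made precise beyond low-rank examples,'' and that you ``would proceed type by type.'' What you have written is a reasonable heuristic programme, consistent with the discussion surrounding the conjecture in the paper, but it does not constitute a proof and should not be presented as one. In particular, the key step---matching simple cuspidal objects on the $\Dq$-side with those on the elliptic side for each elliptic-pseudo-Levi $L$---is precisely the content of the conjecture, and your proposal does not supply an argument for it beyond the expectation that both sides are governed by the same combinatorics.
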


Note that the cuspidal $\cD(\fg)$-modules have been explicitly determined for each simply connected quasi-simple group $G$ (see the table on the second page of \cite{Lusztig1984}). As part of ongoing work of the first author with Penghui Li and Dragos Fratila, we will describe how the set of $\q$-cuspidal data (or equivalently, elliptic cuspidal data) for any given group $G$  can be explicitly determined from Lusztig's table, giving a complete description of the category of strongly equivariant $\Dq$-modules. 

\subsection{Skeins dictionary}

Theorem \ref{mainthm:HKuniv} and its corollaries are of independent interest in the study of skein algebras.  The works \cite{BZBJ1,BZBJ2, Cooke,GJS} establish, for any group $G$, a canonical equivalence of categories,
\[
\Dqstr{G} \simeq \SkCat(T^2)\modu,
\]
between the category of strongly equivariant $\Dq$-modules and the free co-completion $\SkCat_G(T^2)\modu$ of the $G$-skein category of the two-torus.

Under this equivalence, $\HKuniv$ maps to the empty skein object: indeed, the presentation we gave for $\HKuniv$ is precisely the presentation of the distinguished object in factorization homology via quantum Hamiltonian reduction given in \cite{BZBJ2}, which is shown in \cite{Cooke} to coincide with the empty skein object.  We immediately obtain an isomorphism between the \emph{skein algebra} $\SkAlg(T^2)$ of the torus $T^2$ -- i.e. the endomorphism algebra in the skein category of the empty object -- and the endomorphism algebra $\End(\HKuniv)$ within strongly equivariant $\Dq$-modules.  To summarize:
\begin{corollary}\label{cor:skeinalg}
We have isomorphisms,
\[
\SkAlg_G(T^2)\cong \End(\HKuniv) \cong \ee\cdot \DAHA{N}{N}^{op}\cdot \ee \cong  (\Coinvq)^{op},
\]
\end{corollary}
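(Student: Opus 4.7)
The proof will string together three isomorphisms, each of which is essentially a citation or an invocation of a result already in hand. First, I would invoke the factorization-homology equivalence
\[
\Dqstr{G} \simeq \SkCat_G(T^2)\modu
\]
established in \cite{BZBJ1, BZBJ2, Cooke, GJS}. The essential point is to identify $\HKuniv$, under this equivalence, with the \emph{empty skein object}, i.e.\ the image of the monoidal unit of $\SkCat_G(T^2)$ under the Yoneda embedding. This is built into the constructions cited: the presentation $\HKuniv = \Dq / \Dq C(\ad)$ recalled in the introduction is exactly the presentation of the distinguished ``vacuum'' object obtained via quantum Hamiltonian reduction in \cite{BZBJ2}, and \cite{Cooke} shows that this coincides with the empty skein. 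Since $\SkAlg_G(T^2)$ is by definition the endomorphism algebra of the empty object in $\SkCat_G(T^2)$, the equivalence of categories then induces the algebra isomorphism $\SkAlg_G(T^2) \cong \End(\HKuniv)$.

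Second, the isomorphism $\End(\HKuniv) \cong \ee \cdot \DAHA{N}{N}^{\op} \cdot \ee$ is exactly Theorem \ref{mainthm:HKuniv}, which has already been proven earlier in the paper, so this step is a direct appeal.

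Third, for the isomorphism $\ee \cdot \DAHA{N}{N}^{\op} \cdot \ee \cong (\Coinvq)^{\op}$, I would apply the shift isomorphism of Section \ref{sec:shift}, which, as explicitly noted immediately after the statement of Theorem \ref{mainthm:HKuniv}, identifies the antispherical DAHA with the algebra $\Coinvq$ of $\SN$-invariant quantum differential operators on $H$. Taking opposite algebras throughout yields the desired form.

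The only genuinely delicate point, and hence the main obstacle, is the first step: making sure the identification of $\HKuniv$ with the empty skein is the one that is \emph{algebra-compatible}, i.e.\ that under composition the endomorphism-algebra structure corresponds correctly to the convolution product of skeins (with the right $\op$ convention). This is not a new calculation but rather a careful bookkeeping of conventions across the chain of equivalences in \cite{BZBJ2} and \cite{Cooke}; once the presentations are matched on the nose as described above, no further computation is required, and the corollary follows by transporting $\End$ along the equivalence.
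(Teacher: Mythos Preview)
Your proposal is correct and mirrors the paper's own reasoning essentially verbatim: the paper derives the first isomorphism from the equivalence $\Dqstr{G}\simeq\SkCat_G(T^2)\modu$ together with the identification of $\HKuniv$ with the empty skein object (via \cite{BZBJ2} and \cite{Cooke}), the second isomorphism from Theorem~\ref{mainthm:HKuniv}, and the third from the shift isomorphism discussed immediately thereafter. The corollary is stated in the introduction with its justification given in the surrounding prose rather than a separate proof block, and your write-up reproduces that argument faithfully.
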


Recall that the case $G=\SL_2$ of this isomorphism is a fundamental computation in skein theory due to Frohman and Gelca \cite{Frohman-Gelca}.  Hence, we may regard Corollary \ref{cor:skeinalg} as a generalization of Frohman and Gelca's result from $G=\SL_2$ to the groups $G=\SLN, \GLN$.  As justification for the categorical approach taken in this paper, we note that a completely elementary/computational proof of this corollary appears intractable due to the well-known combinatorial complexity of a generators-and-relations presentation of $\Coinvq$. In the sequel paper \cite{GJVY}, we extend the isomorphism to all of $\DN$, and apply this isomorphism to compute dimensions of $\GLN$- and $\SLN$-skein modules of the three-torus $T^3$.

In this formulation, Corollary \ref{cor:skeinalg} 
may also be compared to a theorem of Morton and Samuelson \cite{Morton-Samuelson}, where analogous descriptions are obtained relating  the Homflypt skein algebra to the $q=t$ specialization of the elliptic Hall algebra.

It should be possible to recover Morton and Samuelson's result from ours, by simultaneously regarding the Homflypt skein as a ``limit" (polynomial interpolation) of the categories $\Repq(\GLN)$, and likewise the elliptic Hall algebra as an inverse limit of spherical double affine Hecke algebras.  It does not seem possible to reverse this process to obtain our result as a consequence of theirs, and indeed understanding the relation at finite rank rather than in the limit was a primary motivation for the present paper.

In the forthcoming paper \cite{GJVY}, we prove a stronger result (using the Morita equivalence discussed in Theorem \ref{thm:HK generates (GJV)}), which identifies certain relative skein algebras with (full rather than anti-spherical) double affine Hecke algebras.  That result bears an analogous relation to recent work \cite{Mellit-and-co}, where the double affine Hecke algebra was realised as a relative skein algebra for the Homflypt skein relations.

\subsection{Structure of the paper} \label{sec:plan} 
The paper is laid out as follows.  In Section \ref{sec-quantum} we give the basic definitions relating to quantum groups and $\Dq$-modules.  Many of these are recollections, but some are new definitions.  In Section \ref{sec:AHA} we recall the basic definitions for double affine Hecke algebras attached to $\GL$ and $\SL$.  In Section \ref{sec:EndHK} we prove Theorem \ref{mainthm:HKuniv}, by showing that a quantum spherical Schur-Weyl duality homomorphism constructed there is an isomorphism.  In Section \ref{sec:End HK via shift}, we apply the well-known shift isomorphism to compute endomorphism algebras of quantum Hotta-Kashiwara modules $\HKe$ and $\HKchi$, proving Theorems \ref{mainthm:Springer}, \ref{mainthm:decomp}, and \ref{mainthm:Springerchi}.  The paper ends with Section \ref{sec:End via intertwiners}, which can be read independent of the other results in the paper.  It proves via the method of intertwiners a statement purely about certain representations of double affine Hecke algebras, which taken together with Theorem \ref{thm:HK generates (GJV)} provides an alternative proof of Theorems \ref{mainthm:Springer}, \ref{mainthm:decomp}, and \ref{mainthm:Springerchi}.  We include it in the present paper rather than the forthcoming paper as it is closer in spirit to the results obtained here, despite the fact that its application depends on the results from \cite{GJVY}.

\begin{remark}
On the eve of posting this paper, we received a beautiful pre-print \cite{Wen} from Joshua Wen, which establishes a quantum Harish-Chandra isomorphism similar to our Theorem \ref{mainthm:HKuniv} for quantum multiplicative quiver varieties using the radial parts construction.
\end{remark}

\subsection{Acknowledgments and dedication}
We have benefited during this work from insightful conversations with David Ben-Zvi, Pavel Safronov, Peter Samuelson, and Jos\'e Simental,.  The main idea of this paper was suggested to us by Tom Nevins, who outlined to the second author in 2012 the idea for a quantum Springer theory approached via quantum-Hotta Kashiwara modules.  Tom was a generous and inspiring mathematician, and a kind soul; we dedicate this paper in honour of his memory.

The first author was partially supported by NSF grant DMS-2202363. The second author was partially supported by ERC Starting Grant no. 637618, and by the Simons Foundation award 888988 as part of the Simons Collaboration on Global Categorical Symmetry.
The third author was partially supported by Simons Foundation Collaboration Grants 319233 and 707426. 
The authors are grateful to the International Centre for Mathematical Sciences Research in Groups Programme, and to Aspen Center for Physics, which is supported by National Science Foundation grant PHY-1607611, who hosted research visits during which parts of this work was undertaken.

\section{Quantum groups, their differential operators and Hotta-Kashiwara modules}
\label{sec-quantum}
In this section we recall basic definitions about quantum groups, braided quantum coordinate algebras, and quantum differential operators.  Finally, we recall the notion of strongly equivariant $\Dq$-modules, and we give the construction of the Hotta-Kashiwara modules $\HKchi$.

\begin{notation}\label{not:groundfieldquantum}
Fix a positive integer $N$. In this section, we let $\cR$ denote a $\Q$-algebra that is an integral domain together with an element $\q^{\frac 1N} \in \cR^\times$ such that $\frac{\q^m-1}{\q-1}$ is invertible for all $m\geq 1$, and let $\cK$ denote its field of fractions. Our main example for $\cR$ is the local ring $\cR = \Q[\q^{\frac 1N}]_{(\q-1)}$ at $\q=1$, in which case $\cK = \Q(\q^{\frac 1N})$.  Unless otherwise specified, everything in this section will be defined over the base ring $\cR$.
\end{notation}

\begin{remark} 
 \label{rem: various q}
 To straddle the various conventions in the literature, we have had to introduce  different fonts for our parameters:  (teletype) $\q$ for quantum groups of Section \ref{sec-quantum}, (Roman) $q$ (respectively  bold Roman $\Sq$) for the loop parameter of the DAHA of type $\GL$ in Section \ref{sec: GL DAHA} (resp. type $\SL$ in Section \ref{sec: SL DAHA}). As we shall see below, the quadratic parameter in the Hecke algebras is usually denoted $t$, and for Schur-Weyl considerations we later specialize $t=\q$.  All of the parameters come together in Section \ref{sec:EndHK}.
\end{remark}

\begin{notation}
We will have occasion to use the quantum integers denoted $[k]_r$ for $k \in \Z$ and for various elements $r\in\cR$.  We take the (unbalanced) convention
$[k]_{r} = \frac{r^k - 1}{r - 1}$
and
$[n]_{r}! = [n]_{r} \cdots [2]_{r} [1]_{r}$. 
As usual ${\genfrac{[}{]}{0pt}{}{n}{k}}_r = \frac{[n]_r [n-1]_r \cdots [n-k+1]_r}{[k]_r \; [k-1]_r \; \, \cdots  \, \; [1]_r}$.
\end{notation}

\subsection{The quantum groups $\Uqgl, \Uqsl$}
We may work over  Lusztig's integral form of the quantum group over $\cR$, but as $[n]_\q$ is a unit in $\cR$, there is no  appreciable difference working over this integral form as compared to  other conventions. 
We refer to \cite{KlSch} for detailed definitions, in particular the Serre presentation of the quantum groups $\Uqgl$ and $\Uqsl$, the formulas for $R$-matrices, and the Peter-Weyl theorem.

We will consider the $\cR$-linear braided tensor categories $\Rep_\q(\GLN)$ and $\Rep_\q(\SLN)$ of integrable $\Uqgl$-modules (resp. $\Uqsl$-modules). Note that our hypotheses on the ring $\cR$ mean that the Weyl modules $V(\lambda)$ form a collection of compact projective generators as $\lambda$ ranges over the set of integral dominant weights. There are no nontrivial homomorphisms nor extensions between different Weyl modules. Thus every object of $\Rep_\q(G)$ may be written as a direct sum 
\begin{equation}\label{eqn:Repdecomp}
V \cong \bigoplus_\lambda V(\lambda) \otimes_\cR M_\lambda
\end{equation}
where $\M_\lambda$ is an $\cR$-module.
The braiding is specified by an $R$-matrix over the ring $\cR$.

\subsection{The vector and fundamental representations} The vector representation $\cR^N$ for either $\Uqgl$ and $\Uqsl$ will be simply denoted $V$.  We fix $v_{1},\ldots, v_N$ to be the standard basis for $V$.

Let $\tau:V\ot V$ denote the tensor flip, $\tau(v\ot w)=w\ot v$.
 The braiding, $\sigma_{V,V}=\tau\circ R$,
 where $R = R_{V,V}$ denotes the $R$-matrix for $\Uqgl$,
 satisfies a Hecke relation
$$(\sigma_{V,V}-\q)(\sigma_{V,V}+\q^{-1})=0.$$

The $\SLN$ braiding on the vector representation is equal to the $\GLN$ braiding, multiplied by a factor of $\q^{-\frac{1}{N}}$.  To avoid confusion, we will reserve the notation $R$ and $\sigma_{V,V}$ for the $\GLN$ $R$-matrix and braiding, and write $\q^{-\frac{1}{N}}R$ and $\q^{-\frac{1}{N}}\sigma_{V,V}$ to reference the $\SLN$ $R$-matrix and braiding. 

The operators $T_i=\id_{V^{\otimes (i-1)}}\otimes \sigma_{V,V}\otimes \id_{V^{\otimes n-i-1}}$ (in both the $\GLN$ and $\SLN$ case) define an action of the finite Hecke algebra $\Hf_n$ on $V^{\ot n}$ with quadratic parameter $\q$.
We  denote by $\wedgeN{k}: = \bigwedge\nolimits_\q^k(V)$ the $k$th fundamental representation of $U_\q(\glN)$. In particular $\wedgeN{N}$ is the one-dimensional determinant representation. 
We can embed $\wedgeN{N}$  into $V^{\otimes N}$ as the span of 
\begin{gather} \label{eq:z} 
z = \eN \cdot v_1 \otimes v_2 \otimes \cdots \otimes v_N
= \frac{1}{[N]_{\q^{-2}}!}  \sum_{\sigma \in \SN} (-\q^{-1})^{\ellen(\sigma)}
v_{\sigma(1)} \otimes  v_{\sigma(2)} \otimes \cdots \otimes v_{\sigma(N)}.
\end{gather}

In the case of $\slN$ the  vector $z$ defined by
\eqref{eq:z}  is now an  invariant vector. 
Note as in \eqref{eq:triv sign} below for $\q=t$, that $\eN$ is the sign idempotent in $\Hf_N$ .

\subsection{The quantum coordinate algebra}

\begin{definition}\label{defn:REA} The reflection equation algebra of type $\GLN$, denoted $\cO_\q(\MatN)$, is the $\cR$-algebra generated by symbols $\ellgen^i_j$, for $i,j = 1,\ldots N$, subject to the relations,
 $$R_{21}L_1R_{12}L_2 =  L_2R_{21}L_1R_{12},$$
where $L := \sum_{i,j} \ellgen^i_j E^j_i$ is a matrix with entries the generators $\ellgen^i_j$, and for a matrix $X$, we write $X_1=X\ot \id_V$, $X_2=\id_V\ot X$, so that the matrix equation above is equivalent to the list of relations, for $i,j,n,r\in\{1,\ldots,N\}$:
\begin{equation}\sum_{k,l,m,p}R^{ij}_{kl}\ellgen^l_mR^{mk}_{np}\ellgen^p_r = \sum_{s,t,u,v} \ellgen^i_sR^{sj}_{tu}\ellgen^u_vR^{vs}_{nr}.
\label{eqn:OqRelns}\end{equation}
\end{definition}
\begin{remark} We note that, since $R$ appears quadratically on both sides of the defining relation, the relations are unchanged by replacing $R \leadsto \q^{-\frac{1}{N}}R$.
\end{remark}

\begin{proposition}[\cite{JW}]
The element,
$$\detq(L) := \sum_{\sigma\in \SN} (-\q)^{\ellen(\sigma)}\cdot \q^{e(\sigma)} \ellgen^1_{\sigma(1)}\cdots \ellgen^N_{\sigma(N)},$$
is central in $\cO_\q(\MatN)$.
\end{proposition}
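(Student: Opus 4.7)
The plan is to recognize $\detq(L)$ as a $\Uqgl$-invariant element of $\cO_\q(\MatN)$ arising from the top exterior power of the vector representation, and then to invoke the general principle that $\Uqgl$-invariants in a reflection equation algebra are automatically central.  The identification with an invariant proceeds by viewing $\detq(L)$ as the scalar entry in $\End(\wedgeN{N}) \otimes \cO_\q(\MatN) \cong \cR \otimes \cO_\q(\MatN)$ obtained by multiplying $N$ braided copies of $L$ in $\End(V^{\otimes N}) \otimes \cO_\q(\MatN)$ and contracting from both sides with the sign idempotent $\eN \in \Hfn$.  Since $\eN$ projects onto the one-dimensional subspace $\wedgeN{N} \subset V^{\otimes N}$ (cf.~\eqref{eq:z}), and since $\wedgeN{N} \otimes (\wedgeN{N})^\ast$ is the trivial $\Uqgl$-representation, the resulting element of $\cO_\q(\MatN)$ is manifestly $\Uqgl$-invariant.

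More concretely, the first step requires writing a ``braided $N$-fold product'' of $L$, in which each new factor $L_k$ is conjugated by a string of $R$-matrices $R_{ij}$ to pass through the earlier factors, and then expanding $\eN$ as an alternating sum over $\SN$ with coefficients $(-\q^{-1})^{\ellen(\sigma)}$.  The factor $(-\q)^{\ellen(\sigma)}$ in the statement arises from this expansion, while the extra weight $\q^{e(\sigma)}$ accumulates from the hidden $R$-matrix conjugations; matching the outcome against the formula defining $\detq(L)$ is a direct but delicate combinatorial identification.

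For the second step, centrality of invariants in $\cO_\q(\MatN)$ is the familiar slogan that the reflection equation $R_{21} L_1 R_{12} L_2 = L_2 R_{21} L_1 R_{12}$ is precisely designed so that the adjoint $\Uqgl$-action is algebra-equivariant and the braiding collapses to the ordinary tensor flip against any invariant argument.  Thus the braided-commutativity properties of the REA, which in general are weaker than ordinary commutativity, reduce to $xy = yx$ whenever $y$ is $\Uqgl$-invariant.  Equivalently, one can verify directly that taking the $\q$-trace $\operatorname{tr}_{\q^{-2\rho}}$ of the reflection equation over any auxiliary factor produces a central element, and $\detq(L)$ is precisely such a trace with auxiliary space $\wedgeN{N}$.

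The chief obstacle is the combinatorial bookkeeping in the first step: tracking how the function $e(\sigma)$ is produced by the accumulated $R$-matrix contributions and matching it against the formula in the statement.  If this matching becomes cumbersome, a hands-on alternative is to prove centrality directly by commuting a single generator $\ellgen^i_j$ past each of the $N$ factors of $L$ in the sum defining $\detq(L)$ using the defining relation \eqref{eqn:OqRelns}, and observing that the antisymmetrization over $\SN$ causes the off-diagonal contributions to cancel.  This avoids the categorical machinery but at the cost of a lengthy explicit computation.
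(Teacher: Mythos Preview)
The paper does not supply its own proof of this proposition: it is stated with a citation to \cite{JW} and used as a black box.  So there is nothing to compare against beyond noting that your sketch matches the standard argument in the literature.

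Your approach is the right one and is essentially how the result is proved in \cite{JW}: one interprets $\detq(L)$ as the matrix coefficient $L_{\wedgeN{N}}$ in the sense of Remark~\ref{rem:not just V}, which lands in the invariant subspace $(\wedgeN{N})^\ast \otimes \wedgeN{N} \subset \cO_\q(\MatN)$ and is therefore central because the REA is braided-commutative and invariants braid trivially.  The one place your sketch is vague is the combinatorial identification of the explicit formula (the appearance of the excedance statistic $e(\sigma)$); this is genuinely the delicate part and is what \cite{JW} carries out in detail.  Your fallback of commuting a single generator past $\detq(L)$ by hand would work in principle but is painful; the conceptual route via invariants is the one actually used.
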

Here $\ellen(\sigma)$ denotes the length, i.e. the number of pairs $i<j$ such that $\sigma(i)>\sigma(j)$, and $e(\sigma)$ denotes the excedence, i.e. the number of elements $i$ such that $\sigma(i)>i$.

\begin{definition}
The quantum coordinate algebras $\cO_\q(\GLN)$ and $\cO_\q(\SLN)$ are the algebras obtained from $\cO_\q(\MatN)$, by inverting, 
respectively specializing to one, the central element $\detq(L)$.  That is,
$$\cO_\q(\GLN) = \cO_\q(\MatN)[\detq(L)^{-1}],\qquad \cO_\q(\SLN) = \cO_\q(\MatN)/\langle \detq(L)-1 \rangle.$$
\end{definition}

The $\cR$-algebra $\Oq$ is naturally an algebra object in the tensor category $\Repq(G)$, in other words it is a locally finite $\Uq$-module algebra.  We recall the quantum Harish-Chandra isomorphism, 
\[
\Oq^G \cong \cR[H]^W,
\]
 which holds for generic $\q$, and identifies the algebra $\Oq^G$ of $\Uq$-invariant elements of $\Oq$ with the $W$-invariant functions on the maximal torus $H$.
 
 \begin{notation}\label{not:ck-def}
 We denote by $\cas{1},\ldots,\cas{N}$ the canonical generators -- ``quantum Casimirs" -- which are obtained as the quantum trace of the $k$th fundamental representation $\wedgeN{k}$ (see \eqref{eqn:c_k} below for a depiction).  In particular, we have $\cas{N}=\detq(L)$.  
 Working over $\K$ the elements $\cas{k}$ generate the center of $\Oq$.
 \end{notation}

\subsection{The quantum Harish--Chandra category $\HCq(G)$}
\label{sec:quantum HC}
The discussion in this subsection is not strictly necessary to establish the results in this paper, but is meant to aid the reader in interpreting those results.

Let $A$ be some algebra object in the category $\Repq(G)$.  Recall that this means $A$ is an object of $\Repq$, with an associative product $m: A\otimes A \to A$ internal to $\Repq(G)$, i.e. $m$ is a homomorphism of $U_\q(\mathfrak{g})$-modules.

\begin{definition}
The category of \defterm{weakly equivariant} (left) $A$-modules consists of objects $M\in \Repq(G)$, equipped with associative action maps $\Dq\ot M\to M$ internal to $\Repq(G)$.
\end{definition}

The quantum Harish--Chandra category $\HCq(G)$ is the category of $\Oq$-modules internal to the category $\Repq G$.  It carries the cp-rigid monoidal structure of relative tensor product. See \cite{BZBJ2, Safronov-qmm-paper} for further details.

\begin{remark}
Specializing the category $\HCq(G)$ at $\q=1$ (that is, working over the ring $\kappa = \cR/(\q-1)$), we obtain the category $\mathrm{QCoh}([G/G])$ of quasi-coherent sheaves on the quotient stack $[G/G]$. On the other hand, as explained in \cite{Safronov-qmm-paper}, the quasi-classical degeneration $\q\to 1$ of $\HCq(G)$ becomes the classical category of Harish-Chandra $U(\mathfrak{g}$)-bimodules.
\end{remark}

\subsection{Quantum differential operators}
The algebra of quantum differential operators on $G$, which we denote by $\cD_\q(G)$, was studied in many different settings.  The presentation below as a twisted tensor product is adapted from the paper \cite{VV} (see also \cite{BrochierJordan2014}), and hence matches the conventions of \cite{Jordan2008} (see footnote 3 there, however) and \cite{Jordan-Vazirani}.

\begin{definition} \label{def:Dq} For $G=\GLN$, or $\SLN$, the algebra $\cD_\q(G)$ is the twisted tensor product,
\begin{gather}
\label{eq-Dq=Oq-Oq}
\cD_\q(G) = \cO_\q(G) \mathbin{\widetilde{\ot}} \cO_\q(G).
\end{gather}
Denoting $a^i_j$ and $b^i_j$ to be the generators of the first and second factors (henceforth denoted $\cA$ and $\cB$), the cross relations are given in matrix form by:
\begin{align}B_2R_{21}A_1 &= R_{21}A_1R_{12}B_2R_{21}, & \textrm{if $G=\GLN$} \label{eqn:DqRelns}\\
B_2R_{21}A_1 &= R_{21}A_1R_{12}B_2R_{21}\q^{-2/N}, & \textrm{if $G=\SLN$}\label{eqn:DqRelns2}
\end{align}
where $A= \sum_{i,j}a^i_jE^j_i$ and $B=\sum_{i,j}b^i_j E^j_i$.
\end{definition}

\begin{remark}\label{rem:not just V}
The matrices $A$ and $B$ here are defined using the defining representation $V$ of $U_\q(\g)$, and its dual $V^*$.  We note for later reference that in fact for any representation $X$  we can define a matrix $A_X\in \Oq \otimes \End_{\cR}(X)$, which will satisfy the relations of \eqref{eqn:OqRelns}, with $R$ replaced by $R_{X,X}$.  Similarly, in the algebra $\cD_\q(G)$ we can define matrices $A_X$ and $B_Y$ for any representations $X$ and $Y$, and their cross relations will be as in \eqref{eqn:DqRelns}, with $R$ replaced by $R_{X,Y}$.  We refer to \cite{Jordan2008} for details about these matrices and the resulting diagrammatic calculus for computing with $\cD_\q(G)$.
\end{remark}

\begin{notation}\label{def:iA and iB}
  We denote by $i_{\cA}$ and $i_{\cB}$  the inclusions into the first and second tensor factor of \eqref{eq-Dq=Oq-Oq}.
  %%, that is $i_{\cA}(\ellgen_j^i) = a_j^i$ and $i_{\cB}(\ellgen_j^i) = b_j^i$   
  We denote by $\iBt$ the inclusion of $\Oq^G$ into $\Oq$ followed by $i_\cB$, whose image is denoted $\cB^G$.
\end{notation}

Observe that $i_{\cA}\ot i_{\cB}:\Oq\ot\Oq \to\cD_\q(G)$ is the tautological isomorphism of $U_\q(\g)$-modules (however it is not an algebra homomorphism).

\begin{definition}\label{def:qmm}
The quantum moment map,
\[
\ad: \Oq \to \Dq
\]
is defined by the matrix formulas by $\ad(L)=B^{-1}ABA^{-1}$.
\end{definition}

We have the Rosso homomorphism $\Rosso:\Oq\to U_\q(\g)$.  Let $\rhd$ denote the quantum adjoint action of $U_\q(\g)$ on $\Dq$.  Then the homomorphism $\ad$ satisfies the quantum moment map equation,
\begin{equation}
\ad(\ell)y = (\Rosso(\ell)_{(1)}\rhd y) \ad(\Rosso(\ell)_{(2)}), \qquad \textrm{for $\ell\in\Oq$, $y\in\Dq$.}\label{eqn:qmm}
\end{equation}
We refer to \cite{Safronov-qmm-paper} for a more in-depth discussion of quantum moment maps.
\begin{remark}
    If we work over $\cK$, the Rosso homomoprhism $\Rosso$ is an algebra embedding, and pulling back along $\Rosso$ defines a fully faithful functor $\Repq(G) \to \Oq\modu$.
\end{remark}

\begin{notation}\label{not:C}
We require notation for the following $\cR$-submodules  %vector subspaces 
of $\Dq$:
\begin{align*}
C(\ad) &:=  \{\ad(\ell)- \epsilon(\ell) \,\,|\,\, \ell\in\Oq\},\\
C({i_\cA}) &:= \{ i_{\cA}(\ell)- \widetilde{\epsilon} (\ell) \,\,|\,\, \ell\in\Oq\},\\
\Cchi &:= \{ i_{\cB}(y)- \chi(y) \,\,|\,\, y\in\Oq^G\}
\end{align*}
where $\chi$ is an algebra homomorphism from $\Oq^G$ to $\cR$, i.e., a character.

Here, we denote by $\widetilde{\epsilon}$ the linear morphism $\Oq\to\cR$ determined on matrix coefficients $W^*\otimes W$ 
%(for an arbitrary simple module $W$) 
for a Weyl module $W$
by the formula,
\[
W^*\otimes W \xrightarrow{D\otimes 1} W^* \otimes W \xrightarrow{\epsilon} \cR,
\]
where $D$ denotes the natural transformation of $\id_{\Repq(G)}$ depicted on the left below. Hence $\tilde\epsilon$ is denoted on the right below:

\begin{center}
\begin{tikzpicture}

\begin{scope}[shift={(0,0)}]
    \node at (-1,2) {$D =$};
\begin{knot}[
consider self intersections,
%clip radius=8pt,
clip width=5
]
\strand [ultra thick]
(0,2)
to [out=down, in=right]
(-0.5,.75)
to [out=left, in=down]
(-0.75,1)
to [out=up, in=left]
(-0.5,1.25)
to [out=right, in=up]
(0,0) ;

\strand [ultra thick] (0,2)
to [out=up, in=left]
(0.5,3.25)
to [out=right, in=up]
(0.75,3)
to [out=down, in=right]
(0.5,2.75)
to [out=left, in=down]
(0,4);

\flipcrossings{1,2}
\end{knot}
\end{scope};

\begin{scope}[shift={(5,0)}]
    \node at (-2,2) {$\widetilde{\epsilon} :=\epsilon \circ (D \ot 1)=$};
\begin{scope}[scale=0.8]
\begin{knot}[
consider self intersections,
%clip radius=8pt,
clip width=5
]
\strand [ultra thick]
(0,2)
to [out=down, in=right]
(-0.5,.75)
to [out=left, in=down]
(-0.75,1)
to [out=up, in=left]
(-0.5,1.25)
to [out=right, in=up]
(0,0) ;

\strand [ultra thick] (0,2)
to [out=up, in=left]
(0.5,3.25)
to [out=right, in=up]
(0.75,3)
to [out=down, in=right]
(0.5,2.75)
to [out=left, in=down]
(0,4);

\strand [ultra thick] (0,4)
to [out=up, in=up]
(2,4)
to [out=down, in=up]
(2,0);

\flipcrossings{1,2}
\end{knot}
\end{scope};
\end{scope};

\end{tikzpicture}
\end{center}

\end{notation}

\begin{proposition}\label{prop:containment}
We have a containment of right ideals,
\[
C(\ad)\cdot \Dq \subset C(i_{\cA})\cdot \Dq.
\]
Equivalently the surjective homomorphism of right $\Dq$-modules, \[\Dq\to (C(i_{\cA})\cdot \Dq)\big \backslash \Dq,\] factors canonically to a surjective homomorphism,
\[
(C(\ad)\cdot \Dq)\big \backslash \Dq\to (C(i_{\cA})\cdot \Dq)\big \backslash \Dq,
\]
making the following diagram commute:
 \begin{equation}
 \begin{tikzcd}%[hoptionsi]
 \Dq \arrow[d] \arrow[r] & C(i_\cA)\cdot\Dq \, \backslash 
\Dq \\
C(\ad)\cdot \Dq \, \backslash \Dq \arrow[ur, dashed, twoheadrightarrow]
\end{tikzcd}.
 \end{equation}
\end{proposition}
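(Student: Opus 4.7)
The plan is to verify that $\ad(\ell) - \epsilon(\ell)$ lies in the right ideal $C(i_\cA)\cdot \Dq$ for every $\ell \in \Oq$; the surjective factorization of $\Dq$-modules in the proposition then follows by general properties of quotients. A first reduction exploits that $\ad:\Oq\to\Dq$ is an algebra homomorphism (being a quantum moment map) and $\epsilon$ is a character, so that
\[
\ad(\ell m) - \epsilon(\ell m) = \bigl(\ad(\ell) - \epsilon(\ell)\bigr)\ad(m) + \epsilon(\ell)\bigl(\ad(m) - \epsilon(m)\bigr).
\]
Since $C(i_\cA)\cdot \Dq$ is a right ideal, both summands belong to it whenever the ingredients $\ad(\ell)-\epsilon(\ell)$ and $\ad(m)-\epsilon(m)$ do. Hence the set of $\ell \in \Oq$ satisfying the desired property is a subalgebra, and it suffices to check the claim on algebra generators. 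For $\GLN$ and $\SLN$, these are the matrix entries $L^i_j$ of the defining representation $V$ (together with $\detq^{-1}$ in the $\GLN$ case, which is handled routinely as $\detq(L)$ is central).

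For the defining representation Definition \ref{def:qmm} gives $\ad(L) = B^{-1}ABA^{-1}$, while $\epsilon(L)=I$ and $\widetilde\epsilon(L) = M$. Since $V$ is irreducible and $D$ is a natural endomorphism of $\id_{\Repq(G)}$, Schur's lemma forces $M = d_V \cdot I$ for some scalar $d_V \in \cR^{\times}$. Because $A$ is invertible in $\Dq$ (its quantum determinant $\detq(L)$ being invertible), the containment $\ad(L) - I \in C(i_\cA)\cdot \Dq$ entrywise is equivalent, after right-multiplication by $A$, to
\[
B^{-1}AB - A \;\in\; C(i_\cA)\cdot \Dq.
\]
Since $M$ is a scalar matrix we have $B^{-1}MB = M$, and decomposing $A = M + (A-M)$ yields
\[
B^{-1}AB - A \;=\; (M - A) \;+\; B^{-1}(A - M)B.
\]
The first summand $M - A$ lies entrywise in $C(i_\cA)$, so it suffices to show $B^{-1}(A-M)B \in C(i_\cA)\cdot\Dq$.

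For this last step, we would use the cross relation \eqref{eqn:DqRelns} (or its $\SLN$ variant \eqref{eqn:DqRelns2}) to push the entries of $A - M$ past $B^{-1}$ to the left, rewriting
\[
B^{-1}(A - M) B \;=\; \sum_{a,b} (A - M)^{ab}\cdot Y_{ab}
\]
for certain $Y_{ab} \in \Dq$, which manifestly lies in $C(i_\cA)\cdot\Dq$. I expect the main technical hurdle to be the $R$-matrix bookkeeping required to confirm that the reordering really produces only left-multiples of entries of $A-M$, with no surviving ``scalar remainder''. Conceptually this is the statement that the pivotal shift $\widetilde\epsilon(L) = M$ is exactly the constant correction needed so that the quantum moment map $\ad$ factors compatibly through the quotient by $C(i_\cA)\cdot\Dq$; that is, the pivotal structure encoded by $D$ is precisely calibrated to the cross relation defining $\Dq$.
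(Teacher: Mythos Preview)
Your reduction to generators and your matrix reformulation $B^{-1}AB - A \in C(i_{\cA})\cdot\Dq$ are both sound, and indeed the paper proceeds in a closely related way (it works instead with $B^{-1}A - AB^{-1}$, obtained from yours by right-multiplying by $B^{-1}$). Your decomposition $B^{-1}AB - A = (M-A) + B^{-1}(A-M)B$ is correct and elegant.

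However, there is a genuine gap at your final step. You assert that the cross relation lets you rewrite $B^{-1}(A-M)B = \sum_{a,b}(A-M)^{ab}Y_{ab}$ with ``no surviving scalar remainder'', but you do not verify this; you only say you \emph{expect} it and give a conceptual gloss. This is precisely the heart of the matter. When you PBW-reorder $(B^{-1}A)^i_j$ to the form $\sum_{a,b}A^a_b\,g^{ab}_{ij}$ with $g^{ab}_{ij}\in\cB$, the vanishing of your scalar remainder amounts to a specific identity relating $\sum_a g^{aa}_{ij}$ to $(B^{-1})^i_j$. This identity is not formal: it depends on the interaction between the $R$-matrix appearing in the cross relation and the pivotal element $D$ hidden in $\widetilde\epsilon$. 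Your observation that $M = d_V\cdot I$ is scalar is not by itself enough to force the remainder to vanish.

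The paper's proof handles exactly this point by passing to graphical calculus. Rather than tracking $R$-matrix entries, it represents $B^{-1}A$ as a braid diagram (built from the coproduct in $\Oq$, the antipode on the $\cB$-factor, and the multiplication in $\Dq$), then observes that reducing modulo $C(i_{\cA})\cdot\Dq$ corresponds to capping off the $\cA$-strand with $\widetilde\epsilon$. The resulting diagram is then \emph{isotopic} to the diagram for $B^{-1}$ alone---the two curls in $D$ are precisely what are needed to undo the framing picked up by the $\cA$-strand during PBW reordering. This isotopy \emph{is} the missing calibration you allude to in your last sentence; the paper's Figure~1 is the proof, and without something equivalent your argument is incomplete. (The paper then notes that $AB^{-1}$ reduces the same way, which is immediate since $A$ is already on the left.)

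A minor secondary point: your treatment of $\detq^{-1}$ as ``handled routinely'' also needs care. The subalgebra argument gives $\ad(\detq) - 1 \in C(i_\cA)\cdot\Dq$, but passing to the inverse via $\ad(\detq^{-1}) - 1 = -\ad(\detq^{-1})(\ad(\detq)-1)$ requires left-multiplying an element of the \emph{right} ideal $C(i_\cA)\cdot\Dq$ by $\ad(\detq^{-1})$, which is not automatic.
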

\begin{proof}
It will be convenient to use the matrix notation from Definition \ref{def:qmm}.  Since the matrices $A$ and $B$ are invertible (with inverse given by the antipode), we may rewrite $C(\ad)\cdot \Dq = C'(\ad)\cdot\Dq$,
where 
\[
C'(\ad) =  \{\textrm{matrix coefficients of the matrix } B^{-1}A- AB^{-1}\}.\]  It will suffice to show that $C'(\ad)$ is already zero in $(C(i_\cA)\cdot \Dq)\big \backslash \Dq$.  Clearly the matrix $AB^{-1}$ reduces to $B^{-1}$ modulo $C(i_\cA)\cdot \Dq$.  In order to evaluate $B^{-1}A$, we must first PBW order it so that the $A$'s are to the left, and then apply the coinvariant relation to the $\cA$ factor.  We compute this in Figure \ref{fig:Binverse A}, showing that $B^{-1}A$ also reduces modulo $C(i_\cA)\cdot \Dq$ to $B^{-1}$, hence finally that $C'(\ad)$ is zero in $(C(i_\cA)\cdot \Dq)\big \backslash \Dq$, as required.
\end{proof}

\begin{figure}[h]
    \centering

\begin{tikzpicture}
\begin{scope}[shift={(-0.6,-0.5)} ]
    \node[scale=1.2] at (-0.2,2) { $ B^{-1} A=$};
\end{scope}

\begin{scope}[shift={(0,0)}, scale=0.5]
\draw[blue,knot,ultra thick]  %%% no way specify clip width
(4,0)
to [out=up, in=down]
(4,1)
to [out=up, in=down]
(3,2) 
to [out=up, in=down]
(3,4) 
to [out=up, in=down]
(8,8) ;

%%\draw[red,knot,ultra thick] (0.5,6.75) to [out=right, in=left] (2.5,8.75);
\draw[dotted, thick] (0.75, 2.5)--(8, 2.5); 
\draw[dotted, thick] (0.75, 4.5)--(8, 4.5);

\draw[black,knot,ultra thick]
(5,0)
to [out=up, in=down]
(5,1) 
to [out=up, in=down]
(7,2)
to [out=up, in=down]
(7,4)
to [out=up, in=down]
(4,8);

\draw[black,knot,ultra thick]
(2.5,3)
to [out=up, in=right]
(1.75,4) 
to [out=left, in=down]
(1,5)
to [out=up, in=down]
(6,8);

\draw[black,knot,ultra thick]
(1,6.5)
to [out=down, in=up]
(1,6)
to [out=down, in=left]
(3,5.5) 
to [out=right, in=up]
(5.5,3)
to [out=down, in=right]
(4.5,2)
to [out=left, in=down]
(3.5,2.75)
to [out=up, in=right]
(2.75,3.5)
to [out=left, in=up] 
(2,3)
to [out=down, in=down]
(2.5,3);

\end{scope}

\begin{scope}[shift={(5.5,0)}, scale=0.5]
\draw[blue,knot,ultra thick]  %%% no way specify clip width
(4,0)
to [out=up, in=down]
(4,1)
to [out=up, in=down]
(3,2) 
to [out=up, in=down]
(3,4) 
to [out=up, in=down]
(8,8) ;

\draw[red,knot,ultra thick]
(0.5,6.75) to [out=right, in=left] (2.5,8.75);
\draw[black,knot,ultra thick]
(5,0)
to [out=up, in=down]
(5,1) 
to [out=up, in=down]
(7,2)
to [out=up, in=down]
(7,4)
to [out=up, in=down]
(4,8); 
\draw[red,knot,ultra thick] % old yellow
(4,8) 
to [out=up, in=up]
(2,9)
to [out=down, in=left]
(2.5,8.25)
to [out=right, in=down]
(2.75,8.5) 
to [out=up, in=right]
(2.5,8.75);

\draw[black,knot,ultra thick]
(2.5,3)
to [out=up, in=right]
(1.75,4) 
to [out=left, in=down]
(1,5)
to [out=up, in=down]
(6,8);

\draw[red,knot,ultra thick] % old yellow
(0.5,6.75)
to [out=left, in=down]
(0.25,7) 
to [out=up, in=left]
(0.5,7.25)
to [out=right, in=up]
(1,6.5);
\draw[black,knot,ultra thick]
(1,6.5)
to [out=down, in=up]
(1,6)
to [out=down, in=left]
(3,5.5) 
to [out=right, in=up]
(5.5,3)
to [out=down, in=right]
(4.5,2)
to [out=left, in=down]
(3.5,2.75)
to [out=up, in=right]
(2.75,3.5)
to [out=left, in=up] 
(2,3)
to [out=down, in=down]
(2.5,3);

\end{scope}

\begin{scope}[shift={(9,-0.5)} ]
    \node[scale=1.2] at (1.5,2) {$ =$};
    \node[scale=1.2] at (4,2) {$ = B^{-1}$};
\end{scope}

\begin{scope}[shift={(9.5,0)}, scale=0.5]  %%% B^{-1}
\draw[blue,knot,ultra thick]
(3.5,0)
to [out=up, in=down]
(6,8) ;

\draw[black,knot,ultra thick]
(4,8)
%to [out=down, in=right]
to [out=down, in=up]
(3,5.5)
to [out=down, in=right]
(2,4.15)
to [out=left, in=down]
(1.75,4.5)
to [out=up, in=left]
(2,4.85);

\draw[black,knot,ultra thick]
(2,4.85)
to [out=right, in=up]
(5,0) ;

\end{scope}

\end{tikzpicture}

\caption{We compute $B^{-1}A$ modulo $C(i_\cA)\Dq$.  The first diagram, reading from bottom to top, is a composition of the coproduct in $\Oq$, the map $B\mapsto B^{-1}$ on the leftmost resulting factor ($\cB)$ of $\Oq$ in $\cB\otimes \cA = \Oq\otimes \Oq$, and finally the multiplication in $\Dq$ is computed at the top.  The second diagram is equivalent to the first one modulo $C(i_\cA)$, as we have applied ${\color{red}{\widetilde\epsilon}}$ to the $\cA$-factor.  Finally in the third diagram we have simplified the diagram by applying isotopies.  To make the isotopies easier to follow, we've colored an inert backmost strand blue.}
    \label{fig:Binverse A}
\end{figure}

%%%%%%

\begin{notation}
We let $\Dq^G$ (resp. $\cA^G$, $\cB^G$)  denote the invariant subalgebra of $\Dq$ (resp. $\cA^G$, $\cB^G$) with respect to the quantum adjoint action $\rhd$. 
\end{notation}

\begin{definition} \label{def: strongly equivariant}
A weakly equivariant left (respectively, right) $\Dq$-module $M$ is \defterm{strongly equivariant} if the equation 
\[
\ad(\ell)\cdot m = \Rosso(\ell)\cdot m,\qquad \textrm{respectively} \qquad
m\cdot\ad(\ell) = S(\Rosso(\ell)).m\]
holds for all $\ell\in \Oq$ and $m\in M$.  The category of strongly equivariant $\Dq$-modules is denoted $\Dqstr{G}$.
\end{definition}

If we work over a base field $\K$ (so that we treat the quantum parameter $\q$ as generic), then there we have the following more elementary formulation of strongly equivariant $\Dq$-modules.
\begin{definition}
Let $A$ be an $\K$-algebra and $M$ an $A$-module.  We say $M$ is \defterm{locally finite} if, for all $m\in M$, we have $\dim_\K(A\cdot m)<\infty$.
\end{definition}

\begin{proposition}\label{prop:full subcategory}
Suppose that the base ring is the field $\K$.  Then the category of strongly equivariant left (respectively, right) $\Dq$-modules is equivalent to the full subcategory of left (respectively, right) $\Dq$-modules whose restriction to $\Oq$ via $\ad$ is locally finite.
\end{proposition}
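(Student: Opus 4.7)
The plan is to leverage the remark immediately preceding the statement: over the field $\K$, pullback along the Rosso embedding $\Rosso:\Oq \hookrightarrow \Uq$ is a fully faithful functor $\Repq(G) \to \Oq\modu$. The strengthening needed here is that its essential image consists precisely of the $\Oq$-modules on which $\Oq$ acts locally finitely, giving an equivalence between $\Repq(G)$ and the full subcategory of locally finite $\Oq$-modules, through which the $\Repq(G)$-structure on a $\Uq$-module is recovered from local finiteness of the pulled back $\Oq$-action.

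For the forward direction, a strongly equivariant module $M$ is by definition weakly equivariant, so $M\in\Repq(G)$ and hence its restriction along $\Rosso$ is locally finite; the strong equivariance identity $\ad(\ell)\cdot m = \Rosso(\ell)\cdot m$ says precisely that the two $\Oq$-actions on $M$ coincide, so the $\ad$-action is also locally finite. Conversely, given a $\Dq$-module $M$ whose restriction along $\ad$ is locally finite, I use the equivalence to endow $M$ with the unique $\Uq$-structure whose pullback along $\Rosso$ recovers the $\ad$-action; strong equivariance then holds by construction. The remaining check is that this $\Uq$-structure together with the $\Dq$-action makes $M$ into a weakly equivariant $\Dq$-module, i.e., that the action map $\Dq\ot M \to M$ lifts to a morphism in $\Repq(G)$. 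Pulling back along $\Rosso$, this reduces to the identity
\[
\ad(\ell)\cdot(y\cdot m) \;=\; \sum (\ell_{(1)}\rhd y)\cdot (\ad(\ell_{(2)})\cdot m),
\]
which is immediate from the $\Dq$-module structure on $M$ combined with the quantum moment map equation \eqref{eqn:qmm}.

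Finally, morphism spaces match: any $\Dq$-linear map between objects of the proposed subcategory is automatically $\Oq$-linear with respect to $\ad$, hence corresponds under the equivalence to a $\Uq$-linear map, i.e., a morphism in $\Repq(G)$. The main obstacle I anticipate is pinning down the essential image half of the Rosso equivalence, equating local finiteness as an $\Oq$-module with lying in $\Repq(G)$; this is standard for generic $\q$ but needs a careful appeal to the Peter--Weyl theorem and the structure of Weyl modules over $\cK$. Once this is in hand, everything else is a formal matter of transporting structure through the equivalence and one application of the moment map identity.
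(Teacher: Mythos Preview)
The paper states this proposition without proof. Your approach is the natural one and is correct in outline. The only substantive step is precisely the one you flag: that pullback along $\Rosso$ gives an equivalence between $\Repq(G)$ and the full subcategory of locally finite $\Oq$-modules. Over $\K$ this follows from the identification of $\Rosso(\Oq)$ with the ad-locally-finite part of $\Uq$ (work of Joseph--Letzter and Caldero) together with semisimplicity of finite-dimensional $\Uq$-modules at generic $\q$; it is standard but, as you say, not entirely trivial.

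Your verification that the action map $\Dq\otimes M\to M$ is a morphism in $\Repq(G)$ via the moment map equation is correct. One small point worth making explicit: the argument implicitly uses that the image of $\Rosso$ is closed under the $\Uq$-coproduct (again part of the Joseph--Letzter picture), so that the Sweedler factors appearing in \eqref{eqn:qmm} remain in the image of $\Rosso$ and your displayed identity can be read entirely on the $\Oq$-side. With that granted, full faithfulness of $\Rosso^*$ upgrades $\Oq$-equivariance to $\Uq$-equivariance, completing the weak-equivariance check.
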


\begin{proposition}\label{prop:coinv}
Suppose that the base ring is $\K$, that $M$ (respectively $N$) is a strongly equivariant left (resp. right) $\Dq$-module.  Then we have isomorphisms of vector spaces, 
$$
\rightquotient{C(\ad)\cdot M}{M} \, \cong M_G \cong M^{G} \quad \text{and} \quad
\leftquotient{N}{N \cdot \, C(\ad) } \, \cong N_G \cong N^{G},
$$
where $M_G$ and $M^G$ (resp. $N_G$ and $N^G$) denote
 the coinvariants and invariants.
\end{proposition}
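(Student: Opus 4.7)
The plan is to use strong equivariance to rewrite $C(\ad)\cdot M$ in a form directly controlled by the semisimple decomposition of $M$ in $\Repq(G)$.

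My first step would be to unpack the definitions: for $\ell \in \Oq$ and $m \in M$, strong equivariance gives
\[
(\ad(\ell) - \epsilon(\ell))\cdot m \;=\; \Rosso(\ell - \epsilon(\ell)\cdot 1)\cdot m,
\]
where the right-hand action is the $\Uq$-action on $M \in \Repq(G)$ pulled back via the Rosso embedding.  As $\ell$ ranges over $\Oq$, the element $\ell - \epsilon(\ell)\cdot 1$ ranges over the augmentation ideal $\Oq^+ = \ker \epsilon$, and I obtain the key reformulation
\[
C(\ad)\cdot M \;=\; \Rosso(\Oq^+)\cdot M.
\]

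My second step is to apply the decomposition $M = \bigoplus_\lambda V(\lambda)\otimes M_\lambda$ of \eqref{eqn:Repdecomp}.  Since $V(\lambda)^G = 0$ for $\lambda \neq 0$ and $V(0) = \K$, taking invariants gives $M^G \cong V(0)\otimes M_0 \cong M_0$.  I would then establish
\[
\Rosso(\Oq^+)\cdot M \;=\; \bigoplus_{\lambda \neq 0} V(\lambda)\otimes M_\lambda,
\]
which combined with the previous step yields $M/(C(\ad)\cdot M) \cong V(0)\otimes M_0 \cong M^G$, as required.  One containment is easy: $\Rosso$ intertwines the two counits, so $\Rosso(\Oq^+)$ lies in the augmentation ideal of $\Uq$, which acts as zero on the trivial summand $V(0)\otimes M_0$.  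For the other containment, the image of $\Rosso$ in $\End_\K(V(\lambda))$ coincides (via Peter--Weyl duality) with the image of $\Uq$, so $\Rosso(\Oq^+)$ and the augmentation ideal of $\Uq$ act identically on each isotypic piece; for $\lambda \neq 0$, simplicity of $V(\lambda)$ together with the fact that the invariants of $V(\lambda)$ vanish forces the augmentation ideal to act surjectively on $V(\lambda)$.

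The right-module statement is entirely parallel, using $m\cdot \ad(\ell) = S(\Rosso(\ell))\cdot m$ and noting that the antipode $S$ stabilizes the augmentation ideal of $\Uq$.  The main obstacle I anticipate is the Peter--Weyl/density argument equating the actions of $\Rosso(\Oq^+)$ and of the augmentation ideal of $\Uq$ on each Weyl module $V(\lambda)$; once that is in hand, everything reduces to semisimple bookkeeping in $\Repq(G)$.
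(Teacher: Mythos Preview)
The paper states this proposition without proof, treating it as a standard consequence of the semisimplicity of $\Repq(G)$ over $\K$ together with the definition of strong equivariance; so there is no ``paper's own proof'' to compare against.  Your argument is correct and supplies exactly the details one would expect: strong equivariance identifies $C(\ad)\cdot M$ with $\Rosso(\Oq^+)\cdot M$, and then the isotypic decomposition in the semisimple category $\Repq(G)$ reduces the claim to showing that the augmentation ideal of $\Uq$ (equivalently, its image under $\Rosso$) kills the trivial summand and acts surjectively on each nontrivial simple.  The Peter--Weyl/density step you flag as the main obstacle is indeed the only nontrivial input, and it holds over $\K$ because the Rosso map is injective and its image (the locally finite part of $\Uq$) surjects onto $\End_\K(V(\lambda))$ for each $\lambda$; once you have that, everything else is, as you say, semisimple bookkeeping.
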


\subsection{Quantum Hotta-Kashiwara modules}
\begin{notation}
Given a character $\beta$  of an algebra, we write $\r(\beta)$ for the corresponding 1-dimensional module.

Given algebras $R$ and $S$, an algebra homomorphism $\phi:R\to S$, a right $S$-module $N$, and a left $R$-module $M$, we will denote $N\rt{\phi}M$ as a shorthand to denote the relative tensor product $\phi^*(N)\rt{R}M$.  In the case $N$ is a $S$-$S$ bimodule, $N\rt{\phi}M$ inherits the structure of a left $S$-module.
 \end{notation}
\begin{definition}
Let $\HKuniv$ denote the \defterm{universal quantum Hotta-Kashiwara module},
\[
\HKuniv = \Dq\rt{\ad}\r(\epsilon), \]
where $\epsilon$ denotes the counit on $\Oq$, i.e. $\epsilon(\ellgen^i_j)=\delta_{i,j}$.
\end{definition}

\begin{remark}\label{rmk:ideal} Alternatively, $\HKuniv$ is the quotient of $\Dq$ by the left ideal $J$ generated by elements of
$C(\ad)$. 
Since $\Dq^G$ commutes with $\ad(\ell)-\epsilon(\ell)$ for any $\ell\in \Oq$, it follows that the regular $\Dq$-$\Dq^G$-bimodule action on $\Dq$ descends through the quotient, making $\HKuniv$ a $\Dq$-$\Dq^G$-bimodule.
\end{remark}

\begin{proposition}\label{prop: Hom HK gives invariants}
Let $M$ be a strongly equivariant $\Dq$-module. Then
\[
\Hom_{\Dqstr{G}}(\HKuniv,M)\cong \Hom_{\Repq(G)}(\un,M) =: M^G,
\]
where ${\un}$ denotes the trivial representation.
In particular, $\HKuniv$ is a projective object in $\Dqstr{G}$.
\end{proposition}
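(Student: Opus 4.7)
The plan is to use the adjunction afforded by the presentation $\HKuniv = \Dq \rt{\ad} \r(\epsilon)$ as an induced module. By the tensor--Hom adjunction, a $\Dq$-linear homomorphism $\HKuniv \to M$ is the same as an $\Oq$-linear map $\r(\epsilon) \to M$, where $\Oq$ acts on $M$ via $\ad$. Since $\r(\epsilon)$ is one-dimensional and generated by $1$, such a map is determined by the image $m \in M$ of the generator, and the relation imposed on $m$ is exactly $\ad(\ell)\cdot m = \epsilon(\ell)\cdot m$ for all $\ell \in \Oq$. Equivalently, the element $m$ must be annihilated by the left ideal $J$ generated by $C(\ad)$, as in Remark \ref{rmk:ideal}.

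Next, I would invoke the strong equivariance hypothesis on $M$: by Definition \ref{def: strongly equivariant}, the operator $\ad(\ell)$ acts on $M$ as $\Rosso(\ell)$. Hence the defining condition on $m$ becomes $\Rosso(\ell)\cdot m = \epsilon(\ell)\cdot m$ for every $\ell \in \Oq$. This identifies $\Hom_{\Dqstr{G}}(\HKuniv,M)$ with the subspace of vectors in $M$ on which $U_\q(\g)$ acts via the counit after restriction through $\Rosso$.

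The remaining step is to match this subspace with $M^G = \Hom_{\Repq(G)}(\un,M)$. For this I would appeal to the remark following Definition \ref{def:qmm}: over $\cK$, the Rosso map $\Rosso \colon \Oq \hookrightarrow U_\q(\g)$ is an algebra embedding through which pulling back defines a fully faithful functor $\Repq(G) \to \Oq\modu$, and the trivial representation $\un$ pulls back to $\r(\epsilon)$. Consequently, the condition $\Rosso(\ell)m = \epsilon(\ell)m$ for all $\ell \in \Oq$ cuts out precisely $M^G$, yielding the claimed natural isomorphism $\Hom_{\Dqstr{G}}(\HKuniv,M) \cong M^G$.

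For the projectivity statement, I would observe that the composite functor $\Dqstr{G} \to \Repq(G) \to \Vect_\cK$ sending $M \mapsto M^G$ is exact: the forgetful functor $\Dqstr{G} \to \Repq(G)$ is exact, and taking $G$-invariants is exact on $\Repq(G)$ because $\Repq(G)$ is semisimple for $\q$ not a root of unity. Combined with the isomorphism just established, this shows $\Hom_{\Dqstr{G}}(\HKuniv,-)$ is exact, so $\HKuniv$ is projective. The only real subtlety to watch is ensuring that the strong equivariance condition (an equation internal to $\Dq$) really is enforceable on the image of the generator of $\HKuniv$, which is exactly what the bimodule description in Remark \ref{rmk:ideal} guarantees.
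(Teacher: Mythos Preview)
Your proof is correct and takes essentially the same approach as the paper: both use the tensor--Hom adjunction for $\HKuniv = \Dq \otimes_{\ad} \r(\epsilon)$ and then invoke strong equivariance to reduce the $\Oq$-module condition to $G$-invariance. The paper phrases this more tersely via the internal category $\Oq\modu_{\Repq(G)}$ while you unpack the same content through the Rosso map and its full faithfulness; your explicit argument for projectivity via exactness of $(-)^G$ on $\Repq(G)$ fills in a step the paper leaves implicit.
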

\begin{proof}
We have
\[
\Hom_{\Dqstr{G}}(\HKuniv,M)\cong \Hom_{\Oq\modu_{\Repq(G)}}(\mathbf 1,M),
\]
where $\Oq\modu_{\Repq(G)}$ denotes the category of $\Oq$-modules internal to $\Repq(G)$. As $M$ is assumed to be strongly equivariant, this is the just the same as $\Hom_{\Repq}(\un,M)= M^G$ as required. 
\end{proof}

\begin{remark} \label{rem:Dist}
    More generally, we have
    \[
    \Hom_{\Dqstr{G}}(\Dist(W), M) = \Hom_{\Repq(G)}(W,M)
    \]
    where for an object $W\in \Repq$, we denote the strongly equivariant module
\[
\Dist(W) = \Dq \otimes_{\ad} \mathfrak R^\ast(W).
\]
\end{remark}

\begin{definition}
Fix a character $\chi$ of $\Oq^G$.
We define the $\Dq$-module
\[
\HKchi = \HKuniv \rt{\cB^G} \r(\chi),
\]
which we call a \defterm{quantum Hotta-Kashiwara module}.
\end{definition}

\begin{remark}
Unwinding the definitions one sees that the quantum Hotta-Kashiwara module may be presented as
\[
\HKchi = \leftquotient{\Dq }{ ( \Dq\cdot C(\ad) + \Dq\cdot \Cchi)},
\]
as in equation \eqref{eqn:HKchi} from the introduction.
\end{remark}

\begin{remark} We note that $\HKuniv$ and each $\HKchi$ are strongly equivariant as $\Dq$-modules.
\end{remark}

\begin{proposition}\label{prop: HK chi not zero} Suppose that the base ring is $\K$,  the natural inclusion $i_{\cB^G}: \Oq^G\to \Dq$ descends to an isomorphism of vector spaces,
	\[
	\Oq^G\cong \rightquotient{C(i_{\cA})\cdot \HKuniv}{\HKuniv},
	\]
\end{proposition}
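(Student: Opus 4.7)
The plan is to factor the claimed isomorphism through
\[
\HKuniv/(C(i_\cA)\cdot \HKuniv) \;=\; \Dq/(\Dq\cdot C(\ad) + C(i_\cA)\cdot \Dq) \;\cong\; \cB/\{(x-\epsilon(x))\rhd b\} \;\cong\; \cB^G = \Oq^G,
\]
identifying the middle quotient with the adjoint coinvariants of $\cB=\Oq$ and invoking semisimplicity of $\Repq(G)$ to pass from coinvariants to invariants.

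First I would identify $\Dq/(C(i_\cA)\cdot \Dq)$ with $\cB$ as a $U_\q(\g)$-module under $\rhd$. The twisted counit $\tilde\epsilon\colon\cA\to\K$ is an algebra character of the reflection equation algebra $\cA$ (corresponding to the ``pivotal point'' $D_V$), so $C(i_\cA)=\ker\tilde\epsilon$ is a two-sided ideal of codimension one. The PBW decomposition $\Dq\cong\cA\otimes\cB$ then yields $\Dq/(C(i_\cA)\cdot\Dq)\cong (\cA/\ker\tilde\epsilon)\otimes\cB\cong \cB$, with the identification induced by $i_\cB$. Since $\tilde\epsilon$ is $U_\q$-equivariant with target the trivial representation, $C(i_\cA)$ is $\rhd$-stable and the adjoint action descends.

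Next I would compute the image of $\Dq\cdot C(\ad)$ in this quotient. Starting from the quantum moment map identity \eqref{eqn:qmm}, an antipode/counit manipulation yields the identity
\[
\ad(\ell_{(1)})\cdot y\cdot \ad(S(\ell_{(2)})) = \Rosso(\ell)\rhd y
\]
in $\Dq$. Since $C(i_\cA)\cdot \Dq$ is a \emph{right} ideal, Proposition \ref{prop:containment} lets us replace the leftmost $\ad(\ell_{(1)})$ by $\epsilon(\ell_{(1)})$ modulo it, and the counit axiom collapses the left side to $y\cdot \ad(S(\ell))$. Substituting $\ell\mapsto S^{-1}(\ell)$ and subtracting $\epsilon(\ell) y$ then produces
\[
y\cdot(\ad(\ell)-\epsilon(\ell)) \equiv \bigl(\Rosso(S^{-1}(\ell))-\epsilon(\Rosso(S^{-1}(\ell)))\bigr)\rhd y \pmod{C(i_\cA)\cdot \Dq}.
\]
Since the image of $\Rosso$ is the locally finite part of $U_\q(\g)$ and surjects onto $\End(V(\lambda))$ for every Weyl module, the image of $\Dq\cdot C(\ad)$ inside $\cB$ is exactly the adjoint $U_\q(\g)$-coinvariant subspace.

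Finally, the Peter-Weyl decomposition $\cB\cong\bigoplus_\lambda V(\lambda)^\ast\otimes V(\lambda)$ together with semisimplicity of $\Repq(G)$ ensures that in each summand both the adjoint invariants and the adjoint coinvariant quotient are one-dimensional, and that the canonical map $\cB^G\hookrightarrow \cB\twoheadrightarrow \cB/\{\mathrm{coinvariants}\}$ is an isomorphism. Composing with $\cB^G = \Oq^G$ yields the asserted isomorphism $\Oq^G \cong \HKuniv/(C(i_\cA)\cdot \HKuniv)$ induced by $i_{\cB^G}$. The main obstacle is the first step: verifying that $\tilde\epsilon$ is an algebra character of the reflection equation algebra $\cA$, so that the right ideal $C(i_\cA)\cdot\cA$ is of codimension one. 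This is a special feature of the twisted counit defined via the pivot $D$ (and fails for the ordinary Hopf counit), and is precisely what the diagrammatic computation in the proof of Proposition \ref{prop:containment} implicitly exploits.
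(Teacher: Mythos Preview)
Your proof is correct and follows essentially the same route as the paper's. The paper's argument is the four-line chain
\[
C(i_\cA)\cdot\HKuniv\backslash\HKuniv \;=\; \big(C(i_\cA)\cdot\Dq\backslash\Dq\big)\big/\Dq\cdot C(\ad)\;\cong\;\big(C(i_\cA)\cdot\Dq\backslash\Dq\big)^G\;\cong\;\Oq^G,
\]
invoking Proposition~\ref{prop:coinv} for the penultimate step and the PBW decomposition for the last. Your argument has the identical skeleton: you use PBW (plus the character property of $\tilde\epsilon$) to identify $C(i_\cA)\cdot\Dq\backslash\Dq$ with $\cB$, and then your step~2 is precisely an explicit, hands-on verification of what Proposition~\ref{prop:coinv} asserts abstractly --- namely that right multiplication by $C(\ad)$ cuts out the adjoint coinvariants --- carried out via the moment-map identity and Proposition~\ref{prop:containment}. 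Your step~3 (semisimplicity gives coinvariants $=$ invariants) is again exactly the content of Proposition~\ref{prop:coinv}.

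The one point worth noting is that your flagged ``main obstacle'' --- that $\tilde\epsilon$ is an algebra character of the reflection equation algebra, so that $C(i_\cA)\cdot\cA$ has codimension one --- is genuinely needed and is implicitly used in the paper's appeal to PBW as well; the paper does not spell this out. Your identification of it is apt, and it does hold (the matrix $D_V$ solves the numerical reflection equation, which is what is required).
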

\begin{proof}
We compute,
\begin{align*}
\rightquotient{C(i_{\cA})\cdot\HKuniv}{\HKuniv} &= \rightquotient{C(i_{\cA})\cdot\Dq}{\Big(\leftquotient{\Dq}{\Dq\cdot C(\ad)}\Big)} \\
&= \leftquotient{\left(\rightquotient{C(i_{\cA})\cdot\Dq}{\Dq}\right) }{\Dq\cdot C(\ad)} \\
&\cong \Big(C(i_{\cA})\cdot\Dq \,\,\big\backslash\,\, \Dq\Big)^G\\
&\cong \Oq^G
\end{align*}
where the penultimate isomorphism follows from Proposition \ref{prop:coinv}, and the last isomorphism follows from the PBW property for $\Dq$. The isomorphism is compatible with the inclusion $i_\cB:\Oq\to\Dq$, and hence the claim follows.
\end{proof}
\begin{corollary}\label{cor:HKchi}
Suppose that the base ring is $\K$. For any character $\chi$ of $\Oq^G$, the image of $1\in\Dq$ in the quantum Hotta--Kashiwara module $\HKchi$ is nonzero.
\end{corollary}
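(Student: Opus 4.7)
The plan is to further quotient $\HKchi$ by the left action of $C(i_\cA)$, identify the result with $\cR$, and observe that $1 \in \Dq$ has image $1 \in \cR$, which is nonzero.

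First I would unwind the definition to write $\HKchi = \HKuniv / \HKuniv \cdot C(\chi)$, where $\cB^G = i_\cB(\Oq^G) \subset \Dq^G$ acts on $\HKuniv$ on the right via the $(\Dq,\Dq^G)$-bimodule structure from Remark~\ref{rmk:ideal}. Then I would pass to the further quotient
\[
\HKchi \big/ C(i_\cA)\cdot \HKchi \;=\; \HKuniv\big/ \bigl(C(i_\cA)\cdot \HKuniv + \HKuniv\cdot C(\chi)\bigr).
\]
The key observation is that $C(i_\cA)\cdot\HKuniv$ is stable under the right $\cB^G$-action, since left and right actions commute in a bimodule. Hence the right $\cB^G$-action descends to the quotient $\HKuniv/C(i_\cA)\cdot\HKuniv$.

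Next I would apply Proposition~\ref{prop: HK chi not zero}: the map $i_\cB$ induces an isomorphism $\Oq^G \cong \HKuniv/C(i_\cA)\cdot\HKuniv$, sending $y \mapsto [i_\cB(y)]$. Since $i_\cB$ is an algebra homomorphism, the induced right $\cB^G$-action on the quotient corresponds to multiplication in $\Oq^G$: namely $[i_\cB(y)] \cdot i_\cB(z) = [i_\cB(yz)]$, which under the isomorphism is just $y \mapsto yz$. Consequently, the image of $\HKuniv\cdot C(\chi) = \HKuniv \cdot \{i_\cB(y)-\chi(y) : y\in \Oq^G\}$ in $\Oq^G$ is the $\Oq^G$-submodule generated by $\{y - \chi(y) : y\in \Oq^G\}$, which is precisely the augmentation ideal $\ker(\chi)$.

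Combining these steps yields
\[
\HKchi \big/ C(i_\cA)\cdot \HKchi \;\cong\; \Oq^G/\ker(\chi) \;\cong\; \cR,
\]
and the image of $1 \in \Dq$ is $i_\cB(1) = 1 \in \Oq^G$, which maps to $1 \in \cR$ and is nonzero. This shows the image of $1$ in $\HKchi$ itself is nonzero. The main point requiring care is the compatibility of the three quotients and of the $(\Dq,\cB^G)$-bimodule structure under the identification of Proposition~\ref{prop: HK chi not zero}; however, once one notes that the isomorphism there is given by the algebra map $i_\cB$, this compatibility reduces to the tautological identity $i_\cB(y)\cdot i_\cB(z) = i_\cB(yz)$, and the argument goes through straightforwardly.
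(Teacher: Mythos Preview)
Your argument is correct and follows essentially the same route as the paper's proof: both reduce to Proposition~\ref{prop: HK chi not zero} by quotienting $\HKuniv$ on the left by $C(i_\cA)$ and on the right by $C(\chi)$, landing in $\Oq^G/\ker(\chi)\cong\K$. The only difference is that the paper first passes to $G$-invariants via Proposition~\ref{prop:coinv} (quotienting on the left by $C(\ad)$) and then uses Proposition~\ref{prop:containment} to surject further onto the quotient by $C(i_\cA)$, whereas you quotient by $C(i_\cA)$ directly; your version is slightly more streamlined since it avoids invoking those two propositions.
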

\begin{proof}
We compute,
\begin{eqnarray*}
\HKchi^G \cong \doublequotient{C(\ad)\cdot\Dq}{\Dq}{( \Dq\cdot C(\ad) + \Dq\cdot \Cchi)}.
\end{eqnarray*}
By Proposition \ref{prop:containment}, this surjects onto
\[
\doublequotient{C(i_\cA)\cdot\Dq}{\Dq}{( \Dq\cdot C(\ad) + \Dq\cdot \Cchi)},
\]
which is one-dimensional by Proposition \ref{prop: HK chi not zero}, and spanned by the image of $1\in\Dq$.
\end{proof}

The following definition is a $\q$-analogue of the corresponding notion for $\cD(G)$-modules defined by Ginzburg \cite{Ginsburg1989}.
\begin{definition} \label{def:admissible}
    A strongly equivariant $\Dq$-module $M$ is called \emph{admissible} if it is $\cB^G$-locally finite. We write $\Dqadm{G}$ for the full subcategory of $\Dqstr{G}$ consisting of admissible modules. 
\end{definition}

In analogy with the $\cD(G)$ and $\cD(\mathfrak{g})$ setting, we will use the term $\q$-character sheaf for an admissible strongly equivariant $\Dq$-module.

\begin{definition}\label{def:central}
    Let $C$ denote an orbit for the action of $W^{\aff}$ on the torus $H$. 
    We say that an admissible $\Dq$-module $M$ has \emph{infinitesimal character $C$} if, 
    for any simultaneous eigenvalue $\chi:\cB^G\to \cK$ for the action of $\cB^G$ on $M$, the character $\chi$ maps to $C$ under the projection 
    $\Spec(\cB^G) \cong H/W \to H/W^{\aff}$. 
    We write $\DqadmC{G}$ for the subcategory of admissible modules with infinitesimal character $C$. 
    We call an admissible module \emph{unipotent} if it has infinitesimal character in the $W^{\aff}$-orbit of $1\in H$. 
\end{definition}

\begin{remark}
Just as in the classical case \cite[Theorem 1.3.2]{Ginsburg1989}, one can show that there is an orthogonal decomposition
    \[
\Dqadm{G} = \bigoplus_{C\in H/W^{aff}} \DqadmC{G}.
    \]
We will not need the result in this paper, so we omit the proof. In the case $G=\GLN$, this fact follows readily by considering the functor $\Upsilon$ from Section \ref{sec:shift}, which is known to be an equivalence after the results of \cite{GJVY}. 
\end{remark}

\section{Double affine Hecke algebras}
\label{sec:AHA}

\begin{notation}\label{not:groundfieldAHA}
    When discussing the finite and affine Hecke algebra we will use the following notation. Fix a $\Q$-algebra $\cR$ which is an integral domain and contains fixed element $t \in \cR^\times$, such that $\frac{t^m-1}{t-1}$ is invertible for all $m \neq 0$. Let $\cK$ denote the field of fractions of $\cR$. Unless stated otherwise everything will be defined with base ring $\cR$.
\end{notation}

\begin{definition} The finite Hecke algebra $\Hf_n(t)$  is the $\cR$-algebra with  generators $T_1,\ldots T_{n-1}$, and relations:
\[
T_iT_{i+1}T_i = T_{i+1}T_iT_{i+1}, \textrm{ for $i=1,\ldots, n-2$},\]
\[\quad T_iT_j=T_jT_i, \textrm{ if $|i-j|\geq 2$ },\qquad
(T_i-t)(T_i+t^{-1})=0, \textrm{ for $i=1,\ldots n-1$}.
\]
\end{definition}

Given a reduced word $w = s_{i_1} \cdots s_{i_m} \in \Sn$ we have a corresponding well-defined element $T_w = T_{i_1} \cdots T_{i_m} \in \Hf_n(t).$  We will write $\ellen(w) =m$ for the length of $w$. By convention $T_{\id} = 1.$ Note that the set
$\{ T_w | w \in \Sn\}$ forms a basis of $\Hf_n(t).$

The finite Hecke algebra has two one-dimensional representations. Corresponding to the trivial and the sign representation, respectively, we have the idempotents
\begin{gather} \label{eq:triv sign}
   \etrivn =  \frac{1}{[n]_{t^2}!} \sum_{w \in \Sn} t^{\ellen(w)} T_w, \qquad     \en = \frac{1}{[n]_{t^{-2}}!} \sum_{w \in \Sn} (-t^{-1})^{\ellen(w)} T_w.
\end{gather}
These satisfy $(T_i -t) \etrivn = 0$ and $(T_i + t^{-1}) \en = 0$, for $1 \le i < n$. 

\begin{definition}
The extended affine Hecke algebra $\H_n(t)$ is generated by the algebras $\cR[Y_1^{\pm 1},\ldots Y_n^{\pm 1}]$ and $\Hf_n(t)$, with relations: 
\[
T_iY_iT_i=Y_{i+1}, \textrm{for $i=1,\ldots, n-1$},\quad 
T_i Y_j = Y_j T_i, \textrm{ for $j \neq i, i+1$}
\]
\end{definition}
\begin{remark} \label{rem:Y inverse}  A common alternative presentation  imposes instead the relations $T_i Y_i^{-1} T_i = Y_{i+1}^{-1}$.  An isomorphism between the presentations may be given by inverting $Y_i$.
\end{remark}

We denote by $\RYn$ the $\cR$-subalgebra generated by $Y_1^{\pm 1},\ldots Y_n^{\pm 1}$. 
\begin{notation}\label{not:KYn}We denote by $\SymYn = \RYn^{\Sn}$ the space of symmetric Laurent polynomials, which is also the center of $\H_n(t)$.
To simplify notation, we will write the parameters $t$ and $n$ only when required, and otherwise abbreviate $\H_n(t)$ by either $\H_n$ or simply $\H$. Similarly for $\Hf$, $\RY$, and $\SymY$.\end{notation}
 \subsection{$GL$ DAHA} \label{sec: GL DAHA}

\begin{remark}Throughout most of the paper, statements and proofs will be made using $G=\GLN$ notation. We will warn the reader in cases where statements or proofs need modification to be correct for $\SLN$, with the symbol $\SLGL$. 
\end{remark}

\begin{notation}\label{not:groundfieldGL}
    When discussing the $GL$-DAHA we will use the following notation. Fix a $\Q$-algebra $\cR$ which is an integral domain and contains fixed elements $q,t \in \cR^\times$, such that $\frac{t^m-1}{t-1}$ and $\frac{q^m-1}{q-1}$ are invertible for all $m \neq 0$. Let $\cK$ denote the field of fractions of $\cR$. Unless stated otherwise everything will be defined with base ring $\cR$.
\end{notation}

\begin{definition}\label{def:GLDAHA}
The $\GL_n$ double affine Hecke algebra $\HG$ is
the $\cR$-algebra presented by generators:
$$T_0,T_1,\ldots T_{n-1}, \pi^{\pm 1}, Y_1^{\pm 1},\ldots, Y_n^{\pm 1},$$
subject to relations\footnote{As with $\AffSym$, we drop the relations on the 
second line when $n=2$.}:
\begin{align} 
&(T_i-t)(T_i+t^{-1})=0 \quad (i=0,\ldots, n-1),& && \label{HeckeReln}\\
&T_iT_jT_i = T_jT_iT_j\quad (j\equiv i\pm 1 \bmod n),& &T_iT_j = T_jT_i \quad (\textrm{otherwise}),&\label{BraidReln}\\
&\pi T_i\pi^{-1} = T_{i+1} \quad (i=0,\ldots, n-2),& &\pi T_{n-1}\pi^{-1}=T_0,&\nonumber\\
&T_iY_iT_i=Y_{i+1} \quad (i=1,\ldots, n-1),& &T_0Y_nT_0 = q^{-1}Y_1&\nonumber\\
&T_i Y_j = Y_j T_i \quad  (j \not\equiv i, i+1 \bmod n),& &&\nonumber\\
&\pi Y_i\pi^{-1} = Y_{i+1} \quad (i=1,\ldots, n-1),& &\pi Y_{n}\pi^{-1}=
 q^{-1}Y_1&\nonumber
\end{align}
\end{definition}

We recall that $\HG$ has basis $\{  T_w Y^\beta \mid
w \in \AffSym, \beta \in \Z^n\}$ where we identify $\pi$ with $T_\pi$. 

An alternate presentation of $\HG$ includes generators $X_i^{\pm 1}, 1 \le i \le n$, which are related to the generators above via $X_1 = \pi T_{n-1}^{-1} \cdots T_1^{-1}$ and $X_{i+1} = T_i X_i T_i$.  
%Writing $\K[\cY] = \K[Y_1^{\pm 1}, \cdots, Y_n^{\pm 1}] $ and 
Similar to above, we may write $\cR[\cX] = \cR[X_1^{\pm 1}, \cdots, X_n^{\pm 1}]$.

\begin{notation} \label{not:AHA-GL}
We denote by $\HY$ and $\HX$, respectively, the subalgebras of $\HH$ generated by $\Hf$ and $Y_i^{\pm}$'s (resp, $X_i^{\pm}$'s). Note $\HX$ is also generated by $\Hf$ and $\pi^{\pm 1}$.  Each subalgebra identifies with $\H$ as an abstract algebra.
\end{notation}

\medskip
\subsection{$SL$ DAHA} \label{sec: SL DAHA}

\begin{notation}\label{not:groundfieldSL}
    When discussing the $SL$ DAHA we will use the following notation. Fix a $\Q$-algebra $\cR$ which is an integral domain containing fixed elements $\Zprod$, $\Sq,t \in \cR^\times$, such that $\frac{\Zprod^m-1}{\Zprod-1}$, $\frac{\Sq^m-1}{\Sq-1}$ and $\frac{t^m-1}{t-1}$ are invertible. Let $\cK$ denote the field of fractions of $\cR$.  Unless otherwise specified, all definitions will have base ring $\cR$.
\end{notation}

\begin{definition}\label{def:SLDAHA}
The $\SL_n$ double affine Hecke algebra $\HS$ is presented by generators:
$$T_0,T_1,\ldots T_{n-1}, \Spi, Z_1,\ldots, Z_n,$$
subject to relations:
\begin{align*} 
&(T_i-t)(T_i+t^{-1})=0 \quad (i=0,\ldots, n-1),& &&\\
&T_iT_jT_i = T_jT_iT_j\quad (j\equiv i\pm 1 \bmod n),& &T_iT_j = T_jT_i \quad (\textrm{otherwise}),&\\
&\Spi T_i = T_{i+1} \Spi \quad (i=0,\ldots, n-2),& &\Spi T_{n-1}=T_0 \Spi,&\\
&T_iZ_iT_i=Z_{i+1} \quad (i=1,\ldots, n-1),&
&T_i Z_j = Z_j T_i \quad  (j \not\equiv i, i+1 \bmod n),&T_0Z_nT_0 = \Sq^{2n}Z_1,&
\\
& \Spi Z_i = \Sq^{-2} Z_{i+1}\Spi \quad (i=1,\ldots, n-1),
&
& \Spi Z_{n}=
 \Sq^{2n-2}Z_1 \Spi,&\\
&Z_1 Z_2 \cdots Z_n = \Zprod, & &\Spi^n=1.&
\end{align*}
\end{definition}

We define the generators $X_i$, and the subalgebras $\cR[\cX]$, $\cR[\cY]$, $\HX$, $\HY$ of $\HS$ in the same way as for $\HG$, noting however that the further relations $Z_1\cdots Z_n = \Zprod$ and $X_1\cdots X_n=1$ now apply.

\begin{notation} $\SLGL$ By abuse of notation $\cR[\cY]$ refers to the $\cR$-algebra generated by the $Z_i$, likewise for the other symbols introduced in Notations \ref{not:KYn} and \ref{not:AHA-GL}; this will be an added convenience for making uniform statements for $G = \GL$ or $\SL$ below.\end{notation}

\begin{remark}
    \label{rem:affine Hecke SLGL} $\SLGL$
    Note that any representation of $\HY \subseteq \HS$ lifts to a representation of $\HY \subseteq \HG$.  Likewise any representation of the latter on which $Y_1 Y_2 \cdots Y_n - \Zprod$ vanishes can be seen as a representation of the former.
\end{remark}

\subsection{Intertwiners} \label{sec:intertwiners}

Cherednik's theory of intertwiners plays an important role in Section \ref{sec:End via intertwiners}.  Readers interested only in Section \ref{sec:End HK via shift} may safely skip the current section.

Recall the standard left action of the extended affine symmetric group $\AffSym$ on $\Z^n$, which depends on an integer $p$:
\begin{align}
    w\cdot (b_1,\ldots, b_n) &= (b_{w^{-1}(1)},\ldots,b_{w^{-1}(n)})\label{eqn:symaction}, \quad \textrm{for $w\in \Sn$},\\
\pi\cdot (b_1,b_2,\cdots b_n) &= (b_n+p,b_1,b_2,\ldots,b_{n-1}),\\
s_0\cdot(b_1,\ldots, b_n) &= (b_n+p,b_2,\ldots,b_{n-1},b_1-p).
\end{align}
\noindent We  take $p=1$ here, but the action is valid for any dilation.
Although we use the symbol $\cdot$ to denote the action, this is not to be confused with the {\em dot} action on weights, which will not (explicitly) appear in this paper.

It is notationally convenient to identify $\Z^n$ with the set of quasi-periodic sequences in $\Z^{\Z}$, as those sequences satisfying $b_{i+mn} = b_i-mp$.  Under this identification, we have:
\begin{align*}
    \pi\cdot (b_1,\ldots, b_n) &= (b_0,b_1,\ldots, b_{n-1})\\
    s_0 \cdot (b_1,\ldots,b_n) &= (b_0,b_2,\ldots, b_{n-1},b_{n+1}),
\end{align*}
and so \eqref{eqn:symaction} now holds for all $w \in \AffSym$, not just for $w \in \Sn$.  We will call elements of $\AffSym$ affine permutations. 

Given an element $\beta \in \Z^n$,  \defterm{translation} by $\beta$ is the affine permutation $\tr(\beta)$ sending $i \mapsto i+ n\beta_{i \bmod n}$.  The translation then acts on $\Z^n$ via
$\tr(\beta) \cdot (b_1,\ldots, b_n) = (b_1 + p\beta_1,\ldots, b_n +  p\beta_n) = (b_1 + \beta_1,\ldots, b_n +  \beta_n),$ given we have taken $p=1$ above. The translations form an abelian  normal subgroup of $\AffSym$. 

\medskip

Similarly
we may define an action of $\AffSym$ on $\cR^n$. The subgroup $\Sym_n$ acts by permuting coordinates as in \eqref{eqn:symaction},
but now:
\begin{align*}
\pi\cdot (b_1,b_2,\cdots b_n) &= (q b_n,b_1,b_2,\ldots,b_{n-1}),\\
s_0\cdot(b_1,\ldots, b_n) &= (q b_n,b_2,\ldots,b_{n-1},q^{-1} b_1),
\end{align*}
and so we identify $b_{i+mn} = q^{-m} b_i$.

When quotienting $\AffSym$ by the subgroup generated by $\pi^n$ we have to modify $\SLGL$ the action of $\pi$ to $\Spi$ as follows
\begin{align} \label{eq:modify pi for SL}
\Spi\cdot (b_1,b_2,\cdots b_n) &= (\Sq^{-2n}\Sq^2 b_n,\Sq^2 b_1,\Sq^2 b_2,\ldots,\Sq^2 b_{n-1}).
\end{align}
and then can compute $s_0 = \Spi^{-1} s_1 \Spi$.

Let us extend the subscripts of $T$, $Y$, and $Z$ 
 to take arbitrary integer value, by letting
\begin{align}
    T_{j+mn} &:= T_j, \textrm{ for $j\in \{0,\ldots, n-1\}$ and } m\in \mathbb{Z} \\
    Y_{i+mn} &:= q^{-m}Y_i,
    %\quad   Z_{i+mn}=q^{-m}Z_i,  \quad 
    \textrm{ for $i\in \{1,\ldots, n\}$ and $m\in \mathbb{Z}$.}\\
    Z_{i+mn} &:= \Sq^{2nm}Z_i,
    %\quad   Z_{i+mn}=q^{-m}Z_i,  \quad 
    \textrm{ for $i\in \{1,\ldots, n\}$ and $m\in \mathbb{Z}$.}
\end{align}

$\SLGL$ The following operators and relations are written in $G = \GL$ notation, but obvious modifications  can adapt them to $G=\SL$.

\begin{definition}\label{def:loc}
We let $\cR[\Yloc]$ and $\HHloc$, respectively, denote the Ore localization of $\RY$ (respectively, $\HH$) at the set of $\{\fij{i}{j}\}$ for integers $i,j \in \Z$ with $i \not \equiv j \bmod n$, where
\[
\fij{i}{j} := t Y_i - t^{-1} Y_j
.\]
\end{definition}

\begin{definition}\label{def:intertwiners}
For each integer $i$, we recall \defterm{Cherednik's intertwiners}:
    \[\phii := T_iY_i-Y_iT_i = T(Y_i-Y_{i+1}) + (t-t^{-1})Y_{i+1}.\]
    \item The \defterm {renormalized intertwiners} are
    \[\nui: =\phii(\fij{i}{i+1})^{-1}\in \HHloc.\]
\end{definition}

We recall the following well-known intertwining relations for Cherednik's and the renormalized intertwiners:
\begin{alignat}{4}
    &Y_j\phii & &= \phii Y_{s_i(j)}&
    \qquad 
    &\pi \phii & &= \phij{i+1}\pi, \label{eq:piphi}\\ 
     &Y_j\nui & &= \nui Y_{s_i(j)}&
     \qquad    &\pi\nui & &=\nuj{i+1}\pi.  \label{eq:pinu}
\end{alignat}

We additionally recall the following  braid and almost-quadratic relations for Cherednik's intertwiners:
    \begin{align}
    \phii\phij{j}\phii &= \phij{j}\phii\phij{j},& \textrm{if $i\equiv j\pm 1 \bmod n$}\label{eq:phibraid1}\\
    \phii\phij{j} &= \phij{j}\phii,& \textrm{if $i\not\equiv j\pm 1 \bmod n$}\label{eq:phibraid2}\\
    \phii^2 &= \fij{i}{i+1}\fij{i+1}{i}.
    \end{align}

These easily imply braid and quadratic relations for renormalized intertwiners:
    \begin{align}
    \nui\nuj{j}\nui &= \nuj{j}\nui\nuj{j},& \textrm{if $i\equiv j\pm 1 \bmod n$}\label{eq:nubraid1}\\
    \nui\nuj{j} &= \nuj{j}\nui,& \textrm{if $i\not\equiv j\pm 1 \bmod n$}\label{eq:nubraid2}\\
    \nui^2 &=1.
    \end{align}
Finally, although we will not make use of them, we record the following mixed braid relations: 
    \begin{align*}
    \nui T_{j}\nui &= \nuj{j}T_i\nuj{j},& &\textrm{if $i\equiv j\pm 1\bmod n$}\\
    \nui\nuj{j}T_i&=T_{j}\nui\nuj{j},& &\textrm{if $i \equiv j\pm 1 \bmod n$}\\
     T_i\nuj{j} &= \nuj{j}T_i,& &\textrm{if $i\not\equiv j-1,j,j+ 1 \bmod n$.}\label{eq:nubraid2}
\end{align*}
$\SLGL$ For the $\SL$ version of the above definitions and relations, using $Z_i$ in place $Y_i$, the only relation we need modify is equation \eqref{eq:piphi} to $\Spi \phii = \Sq^{-2} \phij{i+1}\Spi$.
 \medskip

Let us also adopt the convention that $\phij{\pi} = \pi$ and $\nuj{\pi} = \pi$.  Given an affine permutation $w$ with reduced word decomposition $s_{i_1}\ldots s_{i_m}$, we define $\phij{w} = \phij{i_1}\cdots \phij{i_m}$, and $\nuj{w} = \nuj{i_1}\cdots \nuj{i_m}$.  The braid relations \eqref{eq:phibraid1},\eqref{eq:phibraid2},\eqref{eq:nubraid1}, as well as \eqref{eq:piphi} and \eqref{eq:pinu},   ensure that $\phij{w}$ and $\nuj{w}$ are well-defined independent of the choice of reduced word.

Given an  affine permutation $w$, we let  $\Inv(w)$ denote the set of its \defterm{inversions}
\[
\Inv(w) := \{ (i,j) \in \{1,\ldots n\} \times \Z,  \,\, | \,\, i<j \textrm{ and } w(i) > w(j)  \}.
\]
Then we have the following formula relating $\phij{w}$ and $\nuj{w}$:
\begin{equation}\label{eqn:phivsnu}
    \phij{w} = \nuj{s_{i_1}}\fij{i_1}{i_1+1}^{-1}\cdots \nuj{s_{i_m}}\fij{i_m}{i_m+1}^{-1} = \nuj{w}\cdot q^{\alpha}\prod_{(i,j)\in \Inv(w)}\fij{i}{j}
\end{equation}
 which follows immediately from the intertwining relations. The exponent $\alpha$ on $q^\alpha$ arises from applying the relation $\fij{i}{j} = q \fij{i+n}{j+n}$ in order to enforce our convention that $1 \le i \le n$ for $(i,j) \in \Inv(w)$.  
 \begin{example}
 For example, let $w= s_1 s_2 s_0 s_1 s_2 s_0 s_1 s_2 \in \AffSymn{3}$.  While the natural subscripts $(i,j)$ on the product $\prod \fij{i}{j}$ from \eqref{eqn:phivsnu} are 
 \[
 \{(-2,9), (-1,9), (-2,6), (-1,6), (1,6), (-1,3), (1,3), (2,3) \}\]  we define  \[\Inv(w) = \{ (1,12),(2,12),(1,9),(2,9),  (1,6),(2,6),  (1,3), (2,3)  \}.\]  
Thus $\alpha = 5$ in \eqref{eqn:phivsnu} for this $w$.
 \end{example}

\subsection{Induced modules}\label{sec:induced modules}  

For the remainder of this section, we work over $\K$. In other words, we take $\cR$ to already be a field and so $\cR = \K$.

\begin{definition} \label{def:SH}
Let $\SHn$ denote the subalgebra of $\HYn$ generated by $\Hfn$ and $\SymYn$. 
\end{definition}
We note the decomposition $\SHn = \SymYn \otimes_{\cK} \Hfn$ as algebras.  Just as $\cK[\Yn]$ is a free $\SymYn$-module of rank $n!$,  $\HYn$ is a free $\SHn$-module of rank $n!$.  Recall $\HYn$ is also a free $\cK[\Yn]$-module of rank $n!$. When $n$ is understood, we sometimes merely write $\SH$,  as we may write $\Y$ for $\Yn$. 

\begin{notation}
Write $\atup= (a_1, \cdots, a_n)\in (\cK^\times)^n$ for the one dimensional $\cK[\Y]$-module on which all $Y_i - a_i$ vanish. Let  $\vv$ be a basis of this one-dimensional vector space so that $\atup = \cK \vv$.
Write $\{\atup\}$ for the one-dimensional $\SymY$-module obtained as the restriction of $\atup$. In other words, upon which  $\sum_{i=1}^n (Y_i^m - a_i^m)$ vanish for all $m$. 
Alternatively, one may parameterize the module by the $\Sn$-orbit of $\atup$, which is the unordered collection or multiset of the $a_i$, hence our notation.%\\ %% $\{ \atup \}$.
Write $\asgn$ for the $\SH$-module, 1-dimensional over $\cK$, on which $\SymY$ acts as $\{ \atup \}$
%%, i.e., $\left( \sum_{i=1}^n Y_i^m - a_i^m\right)$ vanishes for all $m$, 
and $T_i + t^{-1}$ vanishes for all $1 \le i < n$. 
Let us write  $\uu$ for a basis of this one-dimensional vector space so that $\asgn  = \cK \uu$.
\end{notation}
\begin{remark}
The above construction makes sense for any character of $S(\Y)$, and it is just for convenience that we have chosen one which is the restriction of a character for $\Y$.
\end{remark}

$\SLGL$ In the case of $G=\SLn$, we will also require $\prod_i a_i = \Zprod$, and replace $t$ with $t^{1/N}$ in defining descending, but no other modifications are required.  See also Remark \ref{rem:SLGL affsym} below.
\begin{definition}\label{def: reverse right order}
  We call an $n$-tuple $\atup = (a_1, \cdots, a_n) \in (\cK^\times)^n$ \defterm{descending}
  %%\defterm{reverse right ordered} 
  if $a_i/a_j = t^{2z}$ with $z \in \Z$  and $i< j$ implies $z \ge 0$.   
\end{definition}

\begin{remark}\label{rem:reverse}
If $\atup$ is descending 
%%reverse right ordered 
then $\fij{i}{j} \otimes \vv \neq 0$ for all $1 \le i < j \le n$, i.e., $t a_i - t^{-1} a_j \neq 0$ when $i < j$.
\end{remark}

Then we have the following isomorphism of $n!$-dimensional $\HYn$-modules. To lighten notation below we merely write $\Y$ for $\KY$. 
\begin{theorem}\label{thm:Ind SH}
Suppose that $\atup$ is 
descending. %%reverse right ordered. 
Then the map
\begin{align*}
    \Ind_{\SH}^{\HY} \asgn &\to \Ind_\Y^{\HY} \atup \\
    1 \otimes \uu &\mapsto \ek{n} \otimes \vv
\end{align*} 
is an isomorphism. 
\end{theorem}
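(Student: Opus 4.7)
The plan is threefold: verify the map is well-defined, observe that both sides have $\cK$-dimension $n!$, and then prove surjectivity (which implies isomorphism by equality of finite dimensions).

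For well-definedness, it suffices to check that $\en\otimes \vv$ satisfies the defining relations of the cyclic generator $\uu$ of $\asgn$. The relation $(T_i+t^{-1})\uu=0$ for $1\le i<n$ transfers because $(T_i+t^{-1})\en=0$ in $\Hf_n$ by the defining property of the sign idempotent. The relation $p\cdot \uu = \{\atup\}(p)\uu$ for $p\in \SymY$ transfers because $\SymY$ is central in $\HY$, so $p$ commutes with $\en$ and acts on $\vv$ by the character $\{\atup\}$. Both modules have $\cK$-dimension $n!$ via the standard PBW theorem: $\HY$ is free of rank $n!$ over $\Y$ (with basis $\{T_w\}_{w\in\Sn}$), and free of rank $n!$ over $\SH=\SymY\otimes_{\cK}\Hf_n$ (since $\Y$ is free of rank $n!$ over $\SymY$ and $\HY=\Y\otimes_{\cK}\Hf_n$ as a vector space).

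For surjectivity, since the target is cyclically generated by $1\otimes \vv$ as an $\HY$-module, it suffices to show $1\otimes \vv \in \HY\cdot(\en\otimes \vv)$. My strategy is to exploit Cherednik's intertwiners from Section \ref{sec:intertwiners}. By Remark~\ref{rem:reverse}, the descending hypothesis implies $\fij{i}{j}(\atup)\ne 0$ for $i<j$, so the renormalized intertwiners $\nuj{w}$ act on $1\otimes \vv$ in the localization $\HHloc$, producing joint $\Y$-generalized eigenvectors $v_w := \nuj{w}(1\otimes \vv)$ of weight $w(\atup)$ that span $\Ind_\Y^{\HY}(\atup)$. The expansion $\en\otimes\vv = \frac{1}{[n]_{t^{-2}}!}\sum_w (-t^{-1})^{\ell(w)} T_w\otimes\vv$ shows that every $T_w\otimes\vv$ appears with a nonzero coefficient; transformed to the intertwiner basis, $\en\otimes\vv$ has nonzero contributions across all the weight spaces. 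Applying Lagrange-interpolating polynomials in $\Y$ then extracts the $\atup$-component, yielding a nonzero scalar multiple of $1\otimes \vv$, placing it in $\HY\cdot(\en\otimes \vv)$ as required.

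The main obstacle is the case where some of the weights $w(\atup)$ coincide, which is compatible with the descending hypothesis (e.g., when some $a_i$ are equal, the condition $z\ge 0$ is satisfied with $z=0$). In that case naive Lagrange interpolation fails, and one must refine the argument using the Jordan-block structure of $\Y$ on generalized weight spaces, combined with the intertwining relations $Y_j\nuj{i} = \nuj{i}Y_{s_i(j)}$ and the nonvanishing $\fij{i}{j}(\atup)\ne 0$. Verifying carefully that these still force $1\otimes \vv$ into $\HY\cdot(\en\otimes \vv)$ -- i.e., that the combined action of $\Hf_n$ (which multiplies $\en\otimes\vv$ by the sign character) and $\Y$ (whose nilpotent parts interact nontrivially with $\en$ through the intertwiners) suffices to reach $1\otimes\vv$ -- is the essential technical step and the hardest part of the argument.
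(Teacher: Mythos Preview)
Your well-definedness and dimension count are correct and match the paper. The gap is in surjectivity. You reduce to showing $1\otimes\vv\in\HY\cdot(\en\otimes\vv)$, but then the argument dissolves into an outline that you yourself flag as incomplete. Two concrete problems:

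\begin{itemize}
\item Even in the case of distinct weights, the step ``transformed to the intertwiner basis, $\en\otimes\vv$ has nonzero contributions across all the weight spaces'' is asserted, not proved. That every $T_w\otimes\vv$ appears with nonzero coefficient says nothing about the coefficients after changing to the basis $\{\nu_w(1\otimes\vv)\}$; the transition matrix is triangular but the off-diagonal entries can conspire. You never actually verify that the $\atup$-component of $\en\otimes\vv$ is nonzero, which is the whole point.
\item In the case of coincident weights (which the descending hypothesis certainly allows, e.g.\ $a_i=a_j$ gives $z=0$), you identify that the Lagrange-interpolation idea breaks down and call the repair ``the essential technical step and the hardest part of the argument''---and then stop. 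That is not a proof.
\end{itemize}

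The paper avoids all of this with a single clean stroke: it writes down the explicit polynomial $g(Y)=\prod_{1\le i<j\le n}(Y_i-a_j)$ and computes directly that
\[
g(Y)\,\en\otimes\vv \;=\; \frac{t^{\binom{n}{2}}}{[n]_{t^2}!}\prod_{1\le i<j\le n}\fij{i}{j}\Big|_{\atup}\;\cdot\;(1\otimes\vv).
\]
The scalar on the right is exactly $\prod_{i<j}(ta_i-t^{-1}a_j)$ up to a unit, and the descending hypothesis says precisely that each factor is nonzero. No intertwiners, no case split on whether weights coincide, no Jordan-block analysis. This is the missing idea: rather than trying to project onto a weight space via interpolation, exhibit one polynomial in $\Y$ whose action on $\en\otimes\vv$ lands on a nonzero multiple of the cyclic generator.
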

\begin{proof}
Observe that $\sum_{i=1}^n (Y_i^m - a_i^m) T_j \otimes \vv = T_j \sum_{i=1}^n (Y_i^m - a_i^m) \otimes \vv = 0 $ for $1 \le j < n$ so 
the map 
$$A:  \Ind_{\SH}^{\HY} \asgn \to \Ind_{\Y}^{\HY} \atup$$
determined by $A(1 \ot u) = \ek{n} \ot \vv$ is a well-defined $\HY$-module map.
Due to the well-known equality
\[\dim_\K \Ind_{\SH}^{\HY} \asgn = \dim_\K \Ind_{\Y}^{\HY} \atup = n!,\] it suffices to show $A$ is surjective.

An easy calculation yields
\begin{gather*}
    (Y_i - a_{i+1}) \frac{1}{t+t^{-1}} (t-T_i) \ot \vv
    = 1 \ot \frac{t a_i - t^{-1} a_{i+1}}{t+t^{-1}} \vv
\end{gather*}
which agrees with $\frac{1}{t+t^{-1}} \fij{i}{i+1} \ot \vv= \frac{t}{t^2+1} \fij{i}{i+1} \ot \vv$. In particular, by Remark \ref{rem:reverse} and our assumption that $\atup$ is descending, this is nonzero. 
This is the heart of the $n=2$ case. 
More generally, let $g(Y) = \prod\nolimits_{1 \le i < j \le n}(Y_i - a_j).$
One can show
\begin{gather*}
    g(Y) \ek{n} \ot \vv 
    = \frac{t^{\binom{n}{2}}}{[n]_{t^2} !}  \prod_{1 \le i < j \le n} (t a_i - t^{-1} a_j) \ot \vv
    =    \frac{t^{\binom{n}{2}}}{[n]_{t^2} !} \prod_{1 \le i < j \le n} \fij{i}{j} \ot \vv.
\end{gather*}

As $\atup$ is descending, 
%% reverse right-ordered, 
the above expression evaluates 
at $\atup$ to a nonzero scalar $\alpha$ times $1 \ot \vv$, and so $\ek{n} \ot \vv$ generates $\Ind_{\Y}^{\HY} \atup$ as an $\HY$-module. Thus $A$ is a surjection.  
This completes the proof.

Observe, as $g(Y)\ot \uu$ is a $\Y$-weight vector of weight $\atup$, the map 
\begin{align*}
    B: \Ind_{\Y}^{\HY} \atup &\to \Ind_{\SH}^{\HY} \asgn \\
    1 \ot \vv   &\mapsto \frac {1}{\alpha} g(Y) \ot \uu
\end{align*}
is inverse to the map $A$.
\end{proof}

\section{Endomorphisms of $\HKuniv$ via elliptic Schur--Weyl duality}\label{sec:EndHK}

In this section, we recall the elliptic  Schur-Weyl duality functor from \cite{Jordan2008}, and observe that it intertwines the action of the algebra $\cB^G \cong \Oq^G$ of Casimirs, as appears in the definition of $\HKchi$, with the natural action by the distinguished subalgebra $\SymY\subset \DAHA{N}{N}$.  Exploiting this compatibility we prove several fundamental properties of Hotta--Kashiwara modules.

\begin{notation}\label{not:parameters-all-together}
In this section, we need to unite our many parameters $t, \q, q, \Sq, \Zprod$ as mentioned in Remark \ref{rem: various q} into a single ground ring.   We fix a natural number $N$ and we let $\cR$ be local ring $\Q[\q^{ \frac 1N}]_{(\q-1)}$, and so that $\K=\Q(\q^{\frac{1}{N}})$ is its field of fractions, and we let $\kappa=\cR/(\q-1)$ denote the residue field at $\q=1$.   The other parameters appear as follows:
\begin{itemize}
\item The quantum group parameter is $\q = (\q^{\frac{1}{N}})^N$.
\item The quadratic parameter in Hecke algebras is $t = \q= (\q^{\frac{1}{N}})^N$.  Taking $t= \q $ ensures compatibility with Schur-Weyl duality.
\item The loop parameter for the rank $n$ $\GL$ DAHA $\HG$ is specialized $q = t^{-2n/N} = \q^{-2n/N}$.  Henceforward, we lighten notation and write $\DAHA{N}{n}$ for this specialization.
\item The loop parameter for the rank $n$ $\SL$ DAHA  $\HS$ is specialized $\Sq = t^{1/N} =\q^{1/N}$, and further we take $\Zprod = \Sq^{n(n-N^2)} = t^{n(n/N-N)}$. 
Henceforward, we lighten notation and also write $\DAHA{N}{n}$ for this specialization.
\end{itemize}
We will have need to discuss various $\cR$-modules, denoted $\Rform{M}$, their localization to $\cK$, denoted $\Kform{M}$, and their specialization at $\q=1$ denoted $\qform{M}$.
 
\end{notation}

\subsection{The elliptic Schur--Weyl duality functor}\label{sec:psi is an isom}
Let us fix $G=\GLN$ for some integer $N$.  We will discuss the modification of statements and their proofs for $\SLN$ at the end of this section, indicated by $\SLGL$.   Let $M$ be a strongly equivariant $\Dq$-module, and recall $V$ denotes the defining representation of $U_\q(\mathfrak{g})$.  For each non-negative integer $n$ we have a functor \cite{Jordan2008},
\begin{align*}
    F_n: \Dqstr{G} &\longrightarrow \DAHA{N}{n}\modu,\\
      M \quad &\mapsto  \quad \Hom_{U_\q(\g)}((\wedgeN{N})^{\otimes n/N}, V^{\ot n}\ot M).
\end{align*}

\begin{remark}
Unwinding the definitions, we see that
\[
F_n(M) \cong \Hom_{\Dqstr{G}}(\Dist((V^{\ast})^{\otimes n} \otimes (\wedgeN{N})^{\otimes n/N}), M)
\]
(see Remark \ref{rem:Dist}).
\end{remark}

In the case $G=\GLN$, strong equivariance implies $F_n(M) = 0$ unless $n/N \in \ZZ_{> 0}$. For $G=\SLN$ we have no such restriction as $\wedgeN{N}$ is  trivial.
\begin{notation}\label{not:z1}
Recall from \eqref{eq:z} that $z\in V^{\ot N}$ denotes the sign-like element.
Denoting by $1$ the distinguished cyclic generator of $\HKuniv$, we will regard $z\otimes 1$ as an element of $F_N(\HKuniv)$  which by Proposition \ref{prop: HK chi not zero} is nonzero.
\end{notation}

\begin{notation}
We will re-use the notation $c_k$ from Notation \ref{not:ck-def} now to refer to the image of the quantum Casimir elements in $\cB^G$ under the embedding $i_{\cB}$ or $\iBt$.
\end{notation}

\begin{remark}
We note that the elements $c_k \in \Dq^G$ are not central in $\Dq$.  However, they commute with the image of the moment map: indeed we have that for any $\ell\in \Oq$ and for $y\in\Dq^G$, we have, in Sweedler notation,
\[\ad(\ell) \,  y = (\Rosso(\ell)_{(1)} \rhd y) \ad(\ell_{(2)}) =  y \, \ad(\ell).\]
\end{remark}

It will be convenient to re-express the functor $F_N$ in terms of a Schur-Weyl duality homomorphism from the double affine Hecke algebra to a certain $\Hom$ space in the category of strongly equivariant $\Dq$-modules.  We consider three homomorphisms, $\SWdaff$, $\SWeN$, and $\eSWeN$, the latter two of which we consider in this paper. 

We have  the  following three $\cR$-module homomorphisms, of which the first  and third \eqref{eq:SW}, \eqref{eq:eSWe} are algebra  homomorphisms and the second \eqref{eq:SWe} is a $\DAHA{N}{N}$-module homomorphism. 
\begin{align} %\label{eq:psi}
    \SWdaff: \DAHA{N}{N} &\longrightarrow \Hom(\Dist(V^{\ot N}),\Dist(V^{\ot N}))=F_N(\Dist(V^{\ot N})) \label{eq:SW} \\
    \SWeN: \DAHA{N}{N}\cdot \eN &\longrightarrow \Hom(\Dist(V^{\ot N}),\HKuniv) = F_N(\HKuniv), \label{eq:SWe}\\
    \eSWeN: \eN\cdot\DAHA{N}{N}\cdot \eN &\longrightarrow \Hom(\HKuniv,\HKuniv) = \HKuniv^G \label{eq:eSWe}
\end{align}

In this section, we prove that $\eSWeN$ is an isomorphism over $\K$.  In the forthcoming paper \cite{GJVY} we prove moreover that $\SWdaff$ and $\SWeN$ are also isomorphisms over $\K$.

As with other notation, we may lighten this to $\SWe$ and $\eSWe$ when $N$ is understood. 
\subsection{Compatibility of $\SWdaff$ with bimodule actions} We equip $\DAHA{N}{N}\cdot \eN$ with the structure of a $\DAHA{N}{N}$-$\SymY$-bimodule, with $\DAHA{N}{N}$ acting naturally on the left, and $\SymY$ multiplying on the right, noting that $\eN$, commutes with all elements of $\SymY$.  Let us fix an identification 
\begin{align}
\label{eq:SymB}\SymY &\cong \cB^G  \\
s_{(1^k)} &\mapsto \cas{k} \notag
\end{align}
sending the $k$th elementary symmetric polynomial $s_{(1^k)}$  to
the invariant $\cas{k}$. This equips $\HKuniv$ with the structure of $\Dq$-$\SymY$-bimodule, with $\SymY$ acting by right multiplication through the inclusion $\iBt: \cB^G\subseteq \Dq^G$.  

In the same way we equip $F_N(\HKuniv)$ with the structure of a $\DAHA{N}{N}$-$\SymY$ bimodule, with $\DAHA{N}{N}$ acting as constructed in \cite{Jordan2008}\ and $\SymY$ acting by right multiplication as in \eqref{eq:SymB}.

\begin{proposition} \label{prop:Y and c}
The map $\SWeN$ is a homomorphism of $\DAHA{N}{N}$-$\SymY$-bimodules.
\end{proposition}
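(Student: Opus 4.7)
The proof splits naturally into verifying the two one-sided module structures. For the left $\DAHA{N}{N}$-action, the map $\SWeN$ is defined precisely so that $h \cdot \eN \mapsto h \cdot \SWeN(\eN)$, where $\SWeN(\eN) = z \otimes 1 \in F_N(\HKuniv)$ and the right-hand side uses the $\DAHA{N}{N}$-action on $F_N(\HKuniv)$ constructed in \cite{Jordan2008}. Thus left $\DAHA{N}{N}$-linearity holds by construction. For the right $\SymY$-linearity, the crucial observation is that both right actions commute with the respective left actions already in play: on the source, because $\SymY$ is central in the affine Hecke subalgebra $\HY \subset \DAHA{N}{N}$ and commutes with $\eN$, we have $(h \cdot \eN)\cdot y = hy \cdot \eN$; on the target, because $\cB^G \subset \Dq^G$ commutes with the image of the quantum moment map $\ad$ (as noted in the remark preceding the proposition), right multiplication by $\cas{k}$ on $\HKuniv$ is a left $\Dq$-module map, hence induces a right $\cB^G$-action on $F_N(\HKuniv)$ commuting with the left $\DAHA{N}{N}$-action.

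By this bi-compatibility, the right $\SymY$-linearity of $\SWeN$ is equivalent to a single identity on the generator: for each $k = 1, \ldots, N$,
\[
e_k(Y_1, \ldots, Y_N) \cdot \SWeN(\eN) \;=\; \SWeN(\eN) \cdot \cas{k},
\]
where the left-hand action is via the DAHA and the right-hand action is via the $\cB^G$-structure on $\HKuniv$. The plan is to derive this from the explicit construction of the $Y_i$-action in \cite{Jordan2008}: each $Y_i$ is realized diagrammatically as a ``vertical loop'' operator that takes the $i$-th strand of $V^{\otimes N}$, braids it through the remaining strands, and wraps it around $M$ via the quantum moment map. Summing over subsets of size $k$ to form the elementary symmetric polynomial $e_k(Y_1, \ldots, Y_N)$ amounts to taking a quantum trace in the fundamental representation $\wedgeN{k}$, and by the very definition of $\cas{k}$ as the quantum trace of the $L$-matrix in $\wedgeN{k}$ composed with the inclusion $\iBt$, this produces exactly right multiplication by $\cas{k}$ on the $\HKuniv$-factor.

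The main technical step will be carrying out the ribbon-category manipulation cleanly, matching the conventions in \cite{Jordan2008} for the $Y_i$'s (which are normalized to act on $V^{\otimes n} \otimes M$ by wrapping through $M$ once) with our conventions for the quantum Casimir elements $\cas{k}$ and the identification \eqref{eq:SymB}. I expect this to be routine given the diagrammatic calculus developed for $\Dq$ in the setup of Remark \ref{rem:not just V}, but one must take care that the factor of $\eN$ on the left (which projects $V^{\otimes N}$ onto the invariant line $\wedgeN{N}$) is compatible with the quantum trace over $\wedgeN{k}$: this is precisely the point where the identification $\SymY \cong \cB^G$ via elementary symmetric polynomials $\leftrightarrow$ Casimirs in fundamental representations is forced upon us, fixing the identification \eqref{eq:SymB} as the correct one.
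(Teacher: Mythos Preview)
Your overall architecture matches the paper's: left $\DAHA{N}{N}$-linearity is by construction, and right $\SymY$-linearity reduces to checking a single identity on the cyclic generator $\eN \mapsto z\otimes 1$, to be verified via the graphical calculus for the $Y_i$-action from \cite{Jordan2008}. That much is correct.

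Where your execution diverges is in the diagrammatic step itself. You propose to show directly that ``summing over subsets of size $k$ to form $e_k(Y_1,\ldots,Y_N)$ amounts to taking a quantum trace in $\wedgeN{k}$'' and hence equals right multiplication by $\cas{k}$. This is not what the paper does, and it is not clear your claim can be made precise: the $Y_i$'s are braiding-plus-wrapping operators which do not commute in any simple way, so the sum over monomials $Y_{i_1}\cdots Y_{i_k}$ does not visibly organize into a quantum trace. The paper sidesteps this entirely. It works not with $e_k(Y)$ but with the single monomial $Y_1\cdots Y_k$, sandwiched by $\eN$: using $\eN\ek{k}=\eN$, the operator $\eN Y_1\cdots Y_k\eN$ on $z\otimes 1$ factors through $\wedgeN{k}$ and therefore lands in the one-dimensional space $\Hom_{\Uq}(\wedgeN{N},\wedgeN{N}\otimes C(\wedgeN{k}))$. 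Right multiplication by $\cas{k}$ lands in the same one-dimensional space, so the two are proportional; the constant $\lambda={\genfrac{[}{]}{0pt}{}{N}{k}}_{\q^{-2}}$ is then pinned down by applying $\id\otimes\epsilon_\cB$. The punchline is that the \emph{same} constant relates $\eN s_{(1^k)}\eN$ to $\eN Y_1\cdots Y_k\eN$ inside the DAHA (a purely Hecke-algebraic identity), so the two right actions agree.

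In short: your reduction-to-the-generator is right, but your proposed direct computation of $e_k(Y)\cdot(z\otimes 1)$ is underspecified and likely harder than you anticipate. The paper's trick is to compare both $\cas{k}$ and $e_k(Y)$ to the intermediate quantity $\eN Y_1\cdots Y_k\eN$, exploiting one-dimensionality rather than attempting a direct manipulation of the symmetric sum.
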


\begin{proof}
The map $\SWeN$  of \eqref{eq:SWe} is an $\DAHA{N}{N}$-module map by construction. Hence it only remains to show that $\SWeN$ intertwines the two right $\SymY$-actions.  
Since $\DAHA{N}{N} \cdot \eN$ is cyclic as an $\DAHA{N}{N}$-module, it suffices to equate the two $\SymY$ actions on the cyclic generators $\eN$ and $z \otimes 1 = \SWeN(\eN)$.

We recall from \cite{Jordan2008} that the action of $\cR[\Y] \subseteq \DAHA{N}{N}$ on $F_N(M)$ for any $\Dq$-module $M$ can be expressed using graphical calculus for braided tensor categories. 
In particular, we have:

    \begin{equation}
   \begin{tikzpicture}[line width=2pt, xscale=0.8]
\node[left] at (2,4) {$(Y_1 \cdots Y_k)(z \ot 1)$ \qquad $=$};
        \node[above] at (4,4) {$V^{\otimes N-k}$};
        \node[above] at (5,4) {${\otimes }$};
        \node[above] at (6,4) {$V^{\otimes k}$};
        \node[above] at (7,4) {${\otimes }$};
        \node[above] at (8,3.95) {$(V^{\otimes k})^\ast$};
        \node[above] at (9,4) {${\otimes }$};
        \node[above] at (10,4) {$V^{\otimes k}$};
        
        \draw (6,4) .. controls (6.3,3.5) and (7.7,3.5) .. (8,4);
        \draw (5,5) -- (5,6.5);
        \node[above] at (5,6.5) {$V^{\otimes N}$};
        \draw (9,5) -- (9,5.5);
        \draw (9,6.2) -- (9,6.5);
        \node[above] at (9,5.35) {\framebox{$i_\cB$}}; %% put coupon box
        \node[above] at (9,6.5) {$\cB$};
        %\draw (4,4) -- (4,3.5);
        %\node[above] at (4,3) {$\id_{N-k}$};
        \draw (4,3) -- (4,4);
        \node[above] at (4,2) {$V^{\otimes N-k}$};
        \node[above] at (5,2) {${\otimes }$};
        \node[above] at (6,2) {$V^{\otimes k}$};
        \draw (5,2) -- (5,1.6);
        \node[above] at (5,0.8) {$\wedgeN{N}$};
        %\draw (6,2.5) .. controls (6.3,3.5) and (7.7,3.5) .. (10,4);
        \Under[6,2.7][10,4];
\end{tikzpicture}
\noindent
\end{equation}
Because $\eN \ek{k} = \eN = \ek{k} \eN$ for $k \le N$ we have  
\begin{equation}
%%TIKZ
    \begin{tikzpicture}[line width=2pt, xscale=0.8]
\node[left] at (2,4) {$\eN (Y_1 \cdots Y_k) \eN \cdot (z\otimes 1)$ \qquad $=$};
%%%\node[right] at (8,2) {$\in \Hom_{?}(\wedgeN{N}, \wedgeN{N} \otimes C(\wedgeN{k}))$};
        \node[above] at (4,4) {$\wedgeN{N-k}$};
        \node[above] at (5,4) {${\otimes }$};
        \node[above] at (6,4) {$\wedgeN{k}$};
        \node[above] at (7,4) {${\otimes }$};
        \node[above] at (8,3.95) {$(\wedgeN{ k})^\ast$};
        \node[above] at (9,4) {${\otimes }$};
        \node[above] at (10,4) {$\wedgeN{ k}$};
        %%\node[above,right, red] at (11,4) {why not $\wedge^k V$ ?};
        \draw (6,4) .. controls (6.3,3.5) and (7.7,3.5) .. (8,4);
        \draw (5,5) -- (5,6.5);
        \node[above] at (5,6.5) {$\wedgeN{ N}$};
        \draw (9,5) -- (9,5.5);
        \draw (9,6.2) -- (9,6.5);
        \node[above] at (9,5.5) {$i_\cB$};
        \node[above] at (9,6.5) {$\cB$};
        \draw (4,4) -- (4,3.5);
        \node[above] at (4,3) {$\id_{\wedgeN{N-k}}$};
        \draw (4,3) -- (4,2.7);
        \node[above] at (4,2) {$\wedgeN{N-k}$};
        \node[above] at (5,2) {${\otimes }$};
        \node[above] at (6,2) {$\wedgeN{ k}$};
        \draw (5,2) -- (5,1.6);
        \node[above] at (5,0.8) {$\wedgeN{ N}$};
        %\draw (6,2.5) .. controls (6.3,3.5) and (7.7,3.5) .. (10,4);
        \Under[6,2.7][10,4];
\end{tikzpicture}
\label{eqn:eYse}
\end{equation}

\noindent
which is an element of the one-dimensional vector space, $\Hom_{\Uq}(\wedgeN{N}, \wedgeN{N} \otimes C(\wedgeN{k}))$ where $C(\wedgeN{k}) \cong (\wedgeN{k} )^\ast \otimes \wedgeN{k} \subseteq \cB \cong \Oq$ denotes the $\cR$-submodule of matrix coefficients 
of $\wedgeN k $ under the Peter-Weyl decomposition. 
%%\ref{peter-weyl}. 
Another such homomorphism is given by multiplication by $\cas{k}$ 
and is depicted in graphical calculus by %%TIKZ

\begin{equation} 
    \begin{tikzpicture}[line width=2pt, xscale=0.8]
\node[left] at (2,3) {$\cas{k} (z \ot 1)$ \, $=$};
\node[above] at (4,4) {$\wedgeN{N}$};
        \node[above] at (5,4) {${\otimes }$};
        \node[above] at (8,4) {$\wedgeN{k}$};
        \node[above] at (7,4) {${\otimes }$};
        \node[above] at (6,3.95) {$(\wedgeN{ k})^\ast$};
        \draw (4,2) -- (4,4);
        \node[below] at (4,2) {$\wedgeN{N}$};
        \begin{knot}[
consider self intersections,
clip radius=8pt,
clip width=5
]
\strand [ultra thick]
(6,4)
to [out=down, in=up]
(7.5,3)
to [out=down, in=down]
(6.6,3)
to [out=up, in=down]
(8,4);
\end{knot}
\end{tikzpicture}
 \label{eqn:c_k}
 \end{equation}

\noindent
It follows that $\cas{k}  (z\otimes 1) = \lambda  \eN  Y_1 Y_2 \cdots Y_k  \eN (z\otimes 1)$ for some $\lambda \in \cR$. 
In order to determine the scalar $\lambda$, we apply $\id_{\wedgeN{N}} \otimes \epsilon_\cB$ to both sides.

Applying $\id_{\wedgeN{N}} \otimes \epsilon_\cB$ to Equation \eqref{eqn:eYse} gives simply $z\otimes 1$.  On the other hand, applying $\id_{\wedgeN{N}} \otimes \epsilon_\cB$ to Equation \eqref{eqn:c_k} gives scalar multiplication by ${\genfrac{[}{]}{0pt}{}{N}{k}}_{\q^{-2}}$. 
%=\q^{k(k-N)}\dim_\q(\wedgeN{k})$. 
 Hence $\lambda = {\genfrac{[}{]}{0pt}{}{N}{k}}_{\q^{-2}}$, and so $\eN \cas{k} \eN = {\genfrac{[}{]}{0pt}{}{N}{k}}_{\q^{-2}} \eN Y_1 \cdots Y_k \eN$. We have the same proportion in $\DAHA{N}{N}$, i.e., $\eN s_{(1^k)} \eN = {\genfrac{[}{]}{0pt}{}{N}{k}}_{\q^{-2}}  \eN Y_1 \cdots Y_k \eN$, and so the $\SymY$ actions precisely coincide, as claimed in \eqref{eq:SymB}. 
% note 0 as 4th argument of genfrac makes it math display mode like
\end{proof}

As an aside, observe the scalars computed above satisfy ${\genfrac{[}{]}{0pt}{}{N}{k}}_{\q^{-2}} =\q^{k(k-N)}\dim_\q(\wedgeN{k})$ in the conventions of this paper.  

We record the following corollary, which will appear later in proof of Corollary \ref{cor:qrho}.

\begin{corollary} \label{cor:epsilon as qrho}
Under the identification $\cB^G\cong \SymY$, the restriction to $\cB^G$ of the trivial character $\epsilon:\cB\to\K$ coincides with the restriction to $\SymY$ of the character $\q^{-2\rho}$.    
\end{corollary}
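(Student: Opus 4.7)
The plan is to reduce the statement to an equality on algebra generators. By the identification \eqref{eq:SymB}, the algebras $\cB^G$ and $\SymY$ are generated respectively by the quantum Casimirs $\cas{k}$ and the elementary symmetric polynomials $s_{(1^k)} = e_k(Y_1,\ldots,Y_N)$, and these are matched under the isomorphism. Hence it suffices to verify $\epsilon(\cas{k}) = (\q^{-2\rho})(s_{(1^k)})$ for each $1 \le k \le N$. The right-hand side is tautologically $e_k(\q^{-2\rho_1},\ldots,\q^{-2\rho_N})$.

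To compute the left-hand side, I would unwind the definition of $\cas{k}$ as the quantum trace of the identity on the fundamental representation $\wedgeN{k}$, together with the fact that the pivotal element implementing the quantum trace is $\q^{-2\rho}$. As a function on $G$, $\cas{k}(g) = \operatorname{tr}_{\wedgeN{k}}(g \cdot \q^{-2\rho})$. Since $\epsilon$ is evaluation at the identity, this yields $\epsilon(\cas{k}) = \operatorname{tr}_{\wedgeN{k}}(\q^{-2\rho})$. Because $\wedgeN{k}$ is the $k$-th exterior power of $V$, on which the pivotal element $\q^{-2\rho}$ acts diagonally with eigenvalues $\q^{-2\rho_1},\ldots,\q^{-2\rho_N}$, this trace is exactly $e_k(\q^{-2\rho_1},\ldots,\q^{-2\rho_N})$, matching the right-hand side.

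Alternatively, the value $\epsilon(\cas{k}) = \genfrac{[}{]}{0pt}{}{N}{k}_{\q^{-2}}$ is already obtained in the proof of Proposition \ref{prop:Y and c} by applying $\id_{\wedgeN{N}} \otimes \epsilon_\cB$ to the diagram \eqref{eqn:c_k}. Using the standard $\GLN$ convention $\rho_i = (N+1-2i)/2$ (so $\q^{-2\rho_i} = \q^{2i-N-1}$), a direct application of the $q$-binomial theorem gives
\[
e_k(\q^{1-N},\q^{3-N},\ldots,\q^{N-1}) = \q^{k(k-N)}\genfrac{[}{]}{0pt}{}{N}{k}_{\q^{2}} = \genfrac{[}{]}{0pt}{}{N}{k}_{\q^{-2}},
\]
where the last equality is the standard duality identity for Gaussian binomials. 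This provides a purely computational confirmation that agrees with the conceptual argument.

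The main (minor) obstacle is simply keeping the conventions consistent: one must ensure that the pivotal element in the conventions of Section \ref{sec-quantum} is indeed $\q^{-2\rho}$ (as opposed to $\q^{2\rho}$), and that the quantum dimension formula used here produces $\genfrac{[}{]}{0pt}{}{N}{k}_{\q^{-2}}$ rather than $\genfrac{[}{]}{0pt}{}{N}{k}_{\q^{2}}$. Once these conventions are fixed, the equality on generators follows, and hence the two characters on $\cB^G \cong \SymY$ coincide.
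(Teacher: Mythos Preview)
Your proposal is correct, and the ``alternative'' computation you give is exactly the paper's own one-line proof: invoke the calculation from the proof of Proposition~\ref{prop:Y and c} that $\epsilon(\cas{k}) = \genfrac{[}{]}{0pt}{}{N}{k}_{\q^{-2}}$ and recognize this as $s_{(1^k)}$ evaluated at the tuple $\q^{-2\rho}$. Your first argument, via the pivotal element and the quantum trace, is a pleasant conceptual gloss that the paper does not spell out, but it reduces to the same numerics.

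One convention to watch: in this paper $\q^{-2\rho}$ means the tuple $(1,\q^{-2},\ldots,\q^{2-2N})$, i.e.\ $\rho_i = i-1$ (see Corollary~\ref{cor:eqrho}), not the half-sum $\rho_i = (N+1-2i)/2$ that you used. With the paper's normalization the identity $e_k(1,\q^{-2},\ldots,\q^{2-2N}) = \genfrac{[}{]}{0pt}{}{N}{k}_{\q^{-2}}$ is what is being asserted; your shifted choice introduces a spurious power of $\q$. You already flagged this as the main obstacle, so just align with the paper's convention and the argument goes through verbatim.
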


 \begin{proof}
 In Proposition \ref{prop:Y and c} we have shown $\cas{k}$ acts on $z \otimes 1 \in \wedgeN{N} \otimes \HKe^G$ as \[{\genfrac{[}{]}{0pt}{}{N}{k}}_{\q^{-2}} = s_{(1^k)}(1, \q^{-2}, \q^{-4},\ldots, \q^{2-2N}).\]    
 \end{proof}
 
\renewcommand{\DN}{\HH}

\subsection{Polynomial forms} \label{sec:positive HK}
In the proof of Theorem \ref{thm:SWsurjective}, we will apply Nakayama's lemma arguments.  These arguments are applicable to finitely generated $\cR$-modules, however the modules to which we apply them are not finitely generated.  In the $\GL$ setting, we have a natural grading, and in the $\SL$ setting we have a natural filtration, however even the graded (resp. filtered) parts are not finitely generated.

We will define ``polynomial" forms of the $\cR$-modules $\Dq$, $\HKuniv$, and $\DN$ with the property that the graded parts are finitely generated, and the entire module is obtained by localizing (in the $\GL$ case) or specialising (in the $\SL$ case) certain explicit quantum determinants.  This manoeuvre will allow us to apply Nakayama's lemma.

\begin{definition}\label{defn:REA+} The twisted reflection equation algebra of type $\GLN$, denoted $\overline{\cO_\q(\MatN)}$, is the algebra generated by symbols $\bar \ellgen^i_j$, for $i,j = 1,\ldots N$, subject to the relations,
 $$R_{21}^{-1}\bar{L}_1R_{12}^{-1}\bar{L}_2 =  \bar{L}_2R^{-1}_{21}\bar{L}_1R^{-1}_{12},$$
where $\bar{L} := \sum_{i,j} \bar{\ellgen}^i_j E^j_i$ is a matrix with entries the generators $\bar{\ellgen}^i_j$.
\end{definition}

Note that $\overline{\cO_\q(\MatN)} \simeq \cO_{\q^{-1}}(\MatN)$, as one obtains the inverse R-matrix by inverting $\q$. 

We note that by construction $\cO_\q(\MatN)$ and $\overline{\cO_\q(\MatN)}$ are positively graded with finite-dimensional graded pieces, a property which does not survive to their common localization to $\OqG{\GLN}$ nor their common quotient to $\OqG{\SLN}$. 

In order to give a polynomial version of $\Dq$, we need to introduce the following modifications.

\begin{definition} \label{def:Dq+} For $G=\GLN$, or $\SLN$, the algebra $\Dq^+$ is the twisted tensor product,
\begin{gather}
\label{eq-Dq+=Oq'-Oq}
\Dq^+ = \overline{\cO_\q(G)} \mathbin{\widetilde{\ot}} \cO_\q(G).
\end{gather}
Denoting $\bar a^i_j$ and $ b^i_j$ to be the generators of the first and second factors, the cross relations are given in matrix form by:
\begin{align}\bar A_1 R_{21}^{-1}  B_2 R_{21} &= R_{12} B_2 R_{21}\bar A_1, & \textrm{if $G=\GLN$} \label{eqn:Dq+Relns}\\
\bar A_1 R_{21}^{-1}  B_2R_{21} &= R_{12} B_2R_{21} \bar A_1 \q^{-2/N}, & \textrm{if $G=\SLN$}\label{eqn:Dq+Relns2}
\end{align}
where $\bar A= \sum_{i,j}\bar a^i_jE^j_i$ and $B=\sum_{i,j} b^i_j E^j_i$.
\end{definition}

Recall that $\HKuniv$ is naturally a quotient of $\Dq$ by a certain left ideal $J$. 
It will be convenient to use the matrix notation from Proposition \ref{prop:containment} to describe that ideal and motivate the definition of its polynomial version.  Since the matrices $A$ and $B$ 
are invertible, we may rewrite $J= \Dq\cdot C(\ad) = \Dq\cdot C''(\ad)$,
where 
\[
C''(\ad) =  \{\textrm{matrix coefficients of the matrix } A^{-1}B- BA^{-1}\}.
\]

\begin{definition}
For $G=\GLN$ or $\SLN$, the $\Dq^+$-module $\HKuniv^+$ is the quotient,
\[
\HKuniv^+ = \Dq^+/\Dq^+\cdot \bar{C}(\ad),
\]
where 
\[
\bar C(\ad) =  \{\textrm{matrix coefficients of the matrix } \bar AB- B\bar A\}.
\]
\end{definition}

By construction we have homomorphisms $\Dq^+ \to \Dq$ given by $\bar{A},B\mapsto A^{-1},B$. Note that for $G=\GLN$ this map is a localization whereas for $G=\SLN$ it is a quotient.  In either case we have an isomorphism,
\[
\HKuniv = \Dq\otimes_{\Dq^+}\HKuniv^+.
\]

We note for use in Theorem \ref{thm:SWsurjective} that $\HKuniv^+$ is  a graded $\cR$-module all of whose graded components are finitely generated over $\cR$. 
We now turn to the polynomial submodules in the DAHA setting, where definitions are more straightforward.

Let $\cR[\cY]^+ = \cR[Y_1, \cdots, Y_n] $ and  
$\cR[\cX]^+ = \cR[X_1^{-1}, \cdots, X_n^{-1}] $.  It is convenient to renotate $\bar X_i := X_i^{-1}$. We denote by $\HG^+$ the $\cR$-subalgebra of $\HG$ generated by $\Hf$, $\cR[\cY]^+$ and $\cR[\cX]^+$. 
By construction, we have that $\HG$ is the localization, 
\[\HG=\HG^+[(Y_1\dots Y_n)^{-1}, (\bar X_1\dots \bar X_n)^{-1}]\] at the elements $(Y_1\dots Y_n)$, $(\bar X_1\dots \bar X_n)$.

 We endow  $\HG^+$ with a grading of $\cR$-modules by setting $\deg(\bar X_i)=\deg(Y_i)=1$ for all $i=1,\ldots n$, and $\deg(T_j)=0$ for $j=1\ldots n-1$, and note that the resulting homogeneous components are finite rank and free over $\cR$.

\begin{remark}
Alternatively $\HG^+$ is the subalgebra generated by 
negative powers of $\pi$,  $\cR[\Y^+]$, and $\Hf$,
noting $(\bar X_1\cdots \bar  X_n) = \pi^{-n}$. With respect to the grading above, we have $\deg(\pi^{-1})=1$.
\end{remark}

We define $\HS^+$ to be the algebra generated by $\Hfn$, $Z_i$, $\Spi^{-1}$, with relations obtained by modifying each relation in Definition \ref{def:SLDAHA} (by multiplying on left and right by $\Spi^{-1}$ as needed) to involve only $\Spi^{-1}$. However, we do not include the final two relations $Z_1\cdots Z_n =\Zprod$ and $\Spi^{-n}=1$ (modified from the original relation $\Spi^{n}=1$).  We denote the corresponding subalgebras of $\HS^+$ by $\cR[\cY]^+ := \cR[Z_1, \cdots, Z_n]$ and $\cR[\cX]^+ := \cR[\bar X_1, \cdots,  \bar X_n]$, where $\bar X_1 = T_1 \cdots T_{n-1} \Spi^{-1}$ and $\bar X_{i+1} = T_i^{-1} \bar X_{i} T_i^{-1} $.  We note that $\HS^+$ surjects to $\HS$ with kernel (generated by the omitted relations), rather than embedding as a subalgebra.

Recall that we have the Schur-Weyl duality map \eqref{eq:eSWe}
\[
\eSWeN \colon \eN\DN\eN \to \HKuniv^G.
\]
In the case $G=\GLN$ we note that this map takes the positive subspace $\eN \DN^+ \eN$ to $(\HKuniv^+)^G$.  In the $\SLN$ case, we note instead that the generators-and-relations construction of $\SWdaff\eN$ in \cite{J2008} 
immediately lifts to a map from $\eN\DN^+\eN$ to $(\HKuniv^+)^G$.  We formulate these observations more precisely in the following proposition.

\begin{proposition}\label{prop:plusforms}
Each of $\eN \DN^+\eN$ and $(\HKuniv^+)^G$ are positively graded
$\cR$-modules with finitely generated graded components, and we have a map of graded $\cR$-modules 
    \[
    (\eSWeN)^+ \colon \eN\DN^+\eN \to (\HKuniv^+)^G
    \]
    such that we recover the map
    \[
    \eSWeN \colon \eN \DN \eN \to (\HKuniv)^G
    \]
    by localizing $ \bar {X}_1\ldots  \bar {X}_N Y_1\ldots Y_N$ on the source and $\operatorname{det}_{\q^{-1}}(\bar A)\detq(B)$ on the target in the $\GLN$ case, and by setting $ \bar {X}_1\ldots  \bar  {X}_N Y_1\ldots Y_N=1$ and $\operatorname{det}_{\q^{-1}}(\bar A)\detq(B)=1$ in the  $\SLN$ case.
\end{proposition}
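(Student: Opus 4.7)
The plan has three parts: establish the grading claims, construct $(\eSWeN)^+$, and verify compatibility with the localization (for $\GLN$) or quotient (for $\SLN$).

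For the grading, I would note that both $\cO_\q(\MatN)$ and $\overline{\cO_\q(\MatN)}$ are positively graded with finite-rank graded components by polynomial degree, and these gradings survive in the twisted tensor product used to define $\Dq^+$ (and in the $\SLN$ case, in the homogeneous quotient by $\operatorname{det}_{\q^{-1}}(\bar A)\detq(B)-1$). The relation matrix $\bar A B - B \bar A$ defining $\bar C(\ad)$ is bihomogeneous, so the left ideal it generates is homogeneous and $\HKuniv^+$ inherits a positive grading with finite-rank graded components. The $G$-invariants $(\HKuniv^+)^G$ are extracted isotypic-component-wise from each graded piece (each a finite direct sum of Weyl modules), preserving both the grading and the finite-rank property. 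The analogous statement for $\eN \DN^+ \eN$ is immediate from the defining grading on $\DN^+$.

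To construct $(\eSWeN)^+$ I will appeal to the Reshetikhin--Turaev-style graphical calculus for the Schur-Weyl duality action from \cite{J2008}. The actions of the generators $T_i$, $Y_i$, and $\bar X_i$ on $F_N(\HKuniv)$ are given by tangle diagrams whose coupons are labeled respectively by the braiding $\sigma_{V,V}$, by matrix coefficients of $B$ drawn from $\cO_\q(\MatN)$, and by matrix coefficients of $\bar A$ drawn from $\overline{\cO_\q(\MatN)}$. Crucially, none of these coupons require localization at $\detq(B)$ or $\operatorname{det}_{\q^{-1}}(\bar A)$, so they produce elements of $\Dq^+$ and, after projecting to $\HKuniv^+$ and taking $G$-invariants, of $(\HKuniv^+)^G$. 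This gives the desired graded map in the $\GLN$ case by restriction of $\eSWeN$. In the $\SLN$ case, because $\HS^+$ only surjects onto (rather than embeds in) $\HS$, I would instead use the generators-and-relations construction of $\SWdaff$ from \cite{J2008} to verify directly that the images of the generators $T_i$, $Z_i$, $\Spi^{-1}$ of $\eN \HS^+ \eN$ in $(\HKuniv^+)^G$ satisfy the defining relations of $\HS^+$.

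For compatibility with localization and quotient, the key computation, parallel to Proposition \ref{prop:Y and c}, is that $\detq(B)$ acts on $F_N(\HKuniv)$ as a nonzero scalar multiple of $\eN Y_1 \cdots Y_N \eN$ (a graphical consequence of closing the full antisymmetrizer against the product $Y_1 \cdots Y_N$), and likewise $\operatorname{det}_{\q^{-1}}(\bar A)$ corresponds to $\eN \bar X_1 \cdots \bar X_N \eN$. Inverting these determinants on the target then matches inverting the corresponding products on the source, recovering the full $\eSWeN$ in the $\GLN$ case; setting them equal to $1$ on the target in the $\SLN$ case matches the analogous relation on the source (up to the fixed scalar $\Zprod$ built into our normalization conventions in Notation \ref{not:parameters-all-together}). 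The main expected obstacle is the bookkeeping required to track the quantum-integer scalar normalizations consistently across these identifications; these are computed directly in graphical calculus in essentially the same way as the proof of Proposition \ref{prop:Y and c}.
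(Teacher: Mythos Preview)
Your approach is essentially the same as the paper's, which treats this proposition as a summary of the preceding observations rather than giving a standalone proof: the paper simply notes that in the $\GLN$ case the map $\eSWeN$ restricts to the positive subspaces, while in the $\SLN$ case the generators-and-relations construction of $\SWdaff$ from \cite{J2008} lifts directly to $\eN\DN^+\eN \to (\HKuniv^+)^G$. Your expansion via graphical calculus and the determinant correspondence is exactly in the spirit of Proposition~\ref{prop:Y and c}, as the paper intends.

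One slip to flag: in your treatment of the $\SLN$ grading you refer to the ``homogeneous quotient by $\operatorname{det}_{\q^{-1}}(\bar A)\detq(B)-1$.'' This relation is \emph{not} homogeneous, and imposing it is precisely what destroys the positive grading --- it is the passage from $\HKuniv^+$ to $\HKuniv$, not part of the construction of $\HKuniv^+$ itself. The entire purpose of introducing the polynomial forms in Section~\ref{sec:positive HK} is to work \emph{before} this quotient (or, in the $\GLN$ case, before the localization), where the grading with finitely generated pieces is available for the Nakayama argument in Theorem~\ref{thm:SWsurjective}. So the grading on $(\HKuniv^+)^G$ in the $\SLN$ case comes from the $\MatN$-level twisted tensor product, full stop; the determinant relation only enters at the final compatibility step.
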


\subsection{Proof of Theorem \ref{mainthm:HKuniv}}
We are now ready to prove the main result of this section, which is Theorem \ref{mainthm:HKuniv} from the introduction.
We now fix $n=N$ throughout, and suppress these from the notation for $\DAHA{N}{n}$, %% $\DN$, 
$\SH_N$, $\eN$, etc.  
Recall as in Notation \ref{not:parameters-all-together} we have specialized $q=t^{-2} = \q^{-2}$ and $\Sq=t^{1/N} = \q^{1/N}$.

\renewcommand{\eN}{\ee}

\begin{theorem} \label{thm:SWinj}
We have the following:
\begin{enumerate}
    \item The map $\Kform{(\SWeN)}$ is injective.
    \item The map $\Kform{(\eSWeN)}$ is injective.
    \item The maps $\Rform{(\SWeN)}$, $\Rform{(\eSWeN)}$ are injective.
\end{enumerate}
\end{theorem}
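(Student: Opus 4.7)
The plan is to deduce (3) from (1) and (2) via a torsion-freeness argument, to deduce (2) from (1) by the antisymmetrization idempotent, and to attack (1) over $\cK$ directly using the bimodule structure.

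For (3): the sources $\DAHA{N}{N}\cdot \eN$ and $\eN \cdot \DAHA{N}{N}\cdot \eN$ are free $\cR$-modules by PBW for the DAHA. The targets $F_N(\HKuniv)$ and $\HKuniv^G$ will be shown torsion-free via the polynomial form of Proposition \ref{prop:plusforms}: each graded piece of $\HKuniv^+$ is finitely generated over $\cR$, and flatness of these pieces is checked by comparing the defining matrix relations with their specialization at $\q=1$; flatness then passes through the $\GLN$-localization at $\operatorname{det}_{\q^{-1}}(\bar A)\detq(B)$ or the $\SLN$-quotient by this same regular element. Consequently, injectivity of $\Kform{(\SWeN)}$ and $\Kform{(\eSWeN)}$ over $\cK$ lifts to injectivity of $\Rform{(\SWeN)}$ and $\Rform{(\eSWeN)}$ over $\cR$.

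For (2) from (1): identify $\HKuniv^G = \Hom_{U_\q(\g)}(\wedgeN{N}, \wedgeN{N}\otimes \HKuniv)$ with the subspace of $F_N(\HKuniv) = \Hom_{U_\q(\g)}(\wedgeN{N}, V^{\otimes N}\otimes \HKuniv)$ obtained by applying the antisymmetrizer $\eN$ to the $V^{\otimes N}$ factor. Under this identification, $\eSWeN$ is precisely the restriction of $\SWeN$ to the subspace $\eN \DAHA{N}{N} \eN \subset \DAHA{N}{N}\cdot \eN$, so injectivity of $\SWeN$ forces injectivity of $\eSWeN$.

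For (1) over $\cK$: by Proposition \ref{prop:Y and c}, $\Kform{(\SWeN)}$ is a homomorphism of $(\DAHA{N}{N},\SymY)$-bimodules, and $\DAHA{N}{N}\cdot \eN$ is free as a right $\SymY$-module. Injectivity therefore reduces to injectivity of the induced maps $\DAHA{N}{N}\cdot \eN \otimes_{\SymY} \cK_\chi \to F_N(\HKchi)$ at a Zariski-dense family of characters $\chi$ of $\SymY \cong \cB^G$. For a descending character $\atup \in (\cK^\times)^N$ in the sense of Definition \ref{def: reverse right order}, Theorem \ref{thm:Ind SH} identifies the source fibre with the induced module $\Ind_{\SH}^{\DAHA{N}{N}}(\asgn)$, which by Section \ref{sec:intertwiners} admits an explicit basis of $\Y$-weight vectors arising from the intertwiners $\phij{w}$ applied to the cyclic generator $1\otimes \uu$. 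On the target side, $F_N(\HKchi)$ contains the nonzero class of $z\otimes 1$ by Corollary \ref{cor:HKchi}. The main obstacle is verifying that the images of this intertwiner basis remain $\cK$-linearly independent in $F_N(\HKchi)$: this relies on (i) the normalizing denominators $\fij{i}{j}$ staying invertible under the descending hypothesis (Remark \ref{rem:reverse}), so that the intertwiners produce pairwise distinct $\Y$-weights, and (ii) confirming that no accidental relation imposed by the quotient $\Dq/\Dq\cdot C(\ad) = \HKuniv$ collapses these weight vectors, a verification carried out using the diagrammatic calculus for $\Dq$-modules (Remark \ref{rem:not just V}) together with the weight-shifting relations \eqref{eq:piphi}.
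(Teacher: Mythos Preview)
Your handling of (3) and (2) is essentially fine, though for (3) you are working harder than necessary: you do not need to know anything about torsion-freeness of the \emph{targets}. Since the sources $\DAHA{N}{N}\cdot\eN$ and $\eN\cdot\DAHA{N}{N}\cdot\eN$ are free over $\cR$, any submodule---in particular the kernel---is torsion-free. On the other hand, injectivity over $\cK$ (parts (1) and (2)) says the kernel becomes zero after localization, so the kernel is torsion. Hence the kernel is both torsion and torsion-free, so zero. The flatness analysis of $\HKuniv^+$ that you sketch is not needed here.

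The genuine gap is in your argument for (1). You correctly reduce to checking that the fibre maps
\[
\SWeN\otimes_{\iBt}\chi:\ \Ind_{\SH}^{\DAHA{N}{N}}(\chi\boxtimes\sgn)\ \longrightarrow\ F_N(\HKchi)
\]
are injective for generic $\chi$, and you correctly note that the map is nonzero because $z\otimes 1\neq 0$ in $F_N(\HKchi)$ (Corollary \ref{cor:HKchi}). But you then attempt to prove injectivity by tracking an explicit intertwiner basis into the target and arguing its image stays independent. Your step (ii)---``confirming that no accidental relation imposed by the quotient $\Dq/\Dq\cdot C(\ad)$ collapses these weight vectors''---is exactly the content of the injectivity statement you are trying to prove, and you have not given any mechanism to verify it. Appealing to ``the diagrammatic calculus together with the weight-shifting relations'' does not produce such a mechanism: knowing that the intertwiners shift $\Y$-weights tells you nothing about whether the target weight spaces are nonzero.

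The paper sidesteps this entirely with a one-line observation you have overlooked: for \emph{generic} $\chi$, the source module $\Ind_{\SH}^{\DAHA{N}{N}}(\chi\boxtimes\sgn)$ is \emph{irreducible}. A nonzero $\DAHA{N}{N}$-module map out of an irreducible module is automatically injective. So the combination ``nonzero at the cyclic vector'' plus ``source irreducible for generic $\chi$'' already finishes (1), with no need to analyze the target at all.
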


\begin{proof}
    Claim (2) follows from claim (1) by applying $\eN$ to both sides.  Claim (3) then follows because each of $\DN\eN$, $\eN\DN\eN$ are free over $\cR$, 
    hence the kernel of each map must be both torsion and torsion free, hence zero.

    It remains to prove claim (1). For the remainder of the proof we work over base ring $\K$.  By their constructions, for each $\chi\in \spec(\SymY)$ we have natural isomorphisms, 
\[\DN\cdot \eN \rt{\SymY} \chi \cong \Ind^{\DN}_{\SH}(\chi \boxtimes \sgn),\quad \textrm{ and }\quad  \HKuniv\rt{\SymY}\chi \cong \HKchi.\]
By Proposition \ref{prop:Y and c}, $\SWeN$ descends to a $\DN$-module homomorphism,
\[
\SWeN\ot_{\iBt}\chi:\Ind^{\DN}_{\SH}(\chi \bt \sgn) \to F_N(\HKchi),
\]
such that $(\SWeN\ot_{\iBt}\chi)(1 \ot \uu) = z \otimes 1$.  This expression is nonzero by Corollary \ref{cor:HKchi}.  
Recall that $\DN \cdot \eN$ is a free right $\SymY$-module.   Hence
$\SWeN$ is injective if, and only if, its specialisation $\SWeN\ot_{\iBt}\chi$ is injective for generic $\chi$.  For generic $\chi$, $\Ind^{\DN}_{\SH}(\chi \bt \sgn)$ is irreducible and hence $\SWe\ot_{\iBt}\chi$, being nonzero, must be injective. Thus $\SWeN$ is injective.
\end{proof}

\begin{theorem}\label{thm:SWsurjective}
We have the following:
\begin{enumerate}
\item The map $\qform{(\eSWeN)}$ is an isomorphism.
    \item The map $\Rform{(\eSWeN)}$ is surjective.
    \item The map $\Kform{(\eSWeN)}$ is surjective.
\end{enumerate}

\end{theorem}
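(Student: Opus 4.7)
The three parts will be proved in order, with Part~(1) providing the base case and Parts~(2)--(3) following by Nakayama's lemma and flat base change, respectively. Injectivity is already in hand from Theorem~\ref{thm:SWinj}, so combined with the surjectivity established below one also obtains Theorem~\ref{mainthm:HKuniv}.

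\emph{Part~(1).} At $\q=1$, inspecting the cross relations \eqref{eqn:DqRelns}, \eqref{eqn:DqRelns2} shows that $\Dq$ specialises to the commutative ring $\cO(G \times G)$, with the quantum moment map $\ad$ specialising to the classical multiplicative conjugation moment map $(a,b) \mapsto b^{-1}a^{-1}ba$. Hence $\qform{\HKuniv}$ is the coordinate ring of the commuting scheme $\{(a,b) \in G\times G : ab = ba\}$, and its ring of $G$-invariants, by multiplicative Hamiltonian reduction in type $A$, is identified with $\cO(H \times H)^{\SN}$. On the DAHA side, at $\q=1$ we have $t = q = \Sq = 1$, so $\DN$ specialises to the smash product $\cR[X^{\pm 1}, Y^{\pm 1}] \# \SN$, and a standard computation with the sign idempotent $\eN$ yields
\[
\qform{\eN \cdot \DN \cdot \eN} \;\cong\; \cR[X^{\pm 1}, Y^{\pm 1}]^{\SN} \;\cong\; \cO(H \times H)^{\SN}.
\]
Proposition~\ref{prop:Y and c} then matches the quantum Casimirs $\cas{k}$ with the elementary symmetric functions in the $Y_i$ (up to the unit ${\genfrac{[}{]}{0pt}{}{N}{k}}_{\q^{-2}}$, which at $\q=1$ becomes $\binom{N}{k}$), and using this one verifies that $\qform{(\eSWeN)}$ realises the natural identification of both sides with $\cO(H\times H)^{\SN}$.

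\emph{Part~(2).} Pass to the polynomial forms $(\eSWeN)^+ \colon \eN\cdot\DN^+\cdot\eN \to (\HKuniv^+)^G$ from Proposition~\ref{prop:plusforms}, which are $\cR$-linear homomorphisms of graded $\cR$-modules whose graded pieces are finitely generated over $\cR$. The cokernel $C_\bullet$ is thus itself a graded $\cR$-module with finitely generated graded pieces, and by Part~(1) we have $C_\bullet \otimes_\cR \kappa = 0$. Since $\cR$ is local with maximal ideal $(\q-1)$, Nakayama's lemma applied degree-by-degree forces $C_\bullet = 0$. Hence $(\eSWeN)^+$ is surjective, and surjectivity of $\Rform{(\eSWeN)}$ follows from Proposition~\ref{prop:plusforms}: both the localisation (in the $\GLN$ case) and the quotient construction (in the $\SLN$ case) preserve surjectivity.

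\emph{Part~(3).} Since $\cK$ is a localisation of $\cR$, tensoring the surjection from Part~(2) with $\cK$ over $\cR$ yields the claimed surjectivity of $\Kform{(\eSWeN)}$.

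The main obstacle is the verification in Part~(1) that $\qform{(\eSWeN)}$ genuinely realises the natural identification of both sides with $\cO(H\times H)^{\SN}$. This requires explicit knowledge of the classical limit of the quantum moment map and a careful comparison between the classical limits of the antispherical DAHA and of the Hotta--Kashiwara invariants. The Harish--Chandra-type statement in Proposition~\ref{prop:Y and c} provides the essential bridge, matching the quantum Casimirs $\cas{k}$ with the elementary symmetric functions in the $Y_i$, so that once these classical identifications are in place the compatibility check is straightforward.
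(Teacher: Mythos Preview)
Your overall architecture for Parts~(2) and~(3) matches the paper's exactly: reduce to the polynomial forms via Proposition~\ref{prop:plusforms}, apply Nakayama degree-by-degree, then localise. The gap lies in Part~(1), and you yourself flag it in your final paragraph without resolving it.

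The assertion that the $G$-invariants of the $\q=1$ specialisation of $\HKuniv$ are identified with $\cO(H\times H)^{\SN}$ ``by multiplicative Hamiltonian reduction in type $A$'' is precisely the content of the theorem at $\q=1$; it is not something you can invoke. Two nontrivial facts are hidden here. First, the quotient $\Dq/\Dq\,C(\ad)$ at $\q=1$ is \emph{a priori} only a possibly non-reduced scheme whose reduced locus is the commuting variety; one must know that the relevant invariant ideal is reduced. Second, even granting reducedness, the statement that invariant functions on commuting pairs are exactly the diagonally symmetric polynomials is a Chevalley-type restriction theorem that requires proof. The paper handles both by passing to the polynomial forms $\HKuniv^+$, so that at $\q=1$ one lands in the additive commuting scheme $\Comm_N\subset\MatN\times\MatN$, and then invokes the theorem of Gan--Ginzburg \cite{Gan-Ginzburg} for reducedness of $I^G$ and Haiman's theorem \cite{Haiman-diagonal} for the restriction isomorphism $\cO(\Comm_N)^G\cong\kappa[\cX^+,\cY^+]^{\SN}$. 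The paper then checks directly that $\qform{(\eSWeN)^+}$ is inverse to Haiman's restriction map by evaluating on pairs of diagonal matrices.

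In short: your Part~(1) is a restatement of the desired conclusion rather than a proof. The missing ingredients are exactly the Gan--Ginzburg and Haiman results, and the reason the paper introduces the polynomial forms in Section~\ref{sec:positive HK} is not only to enable Nakayama but also to place the $\q=1$ limit in the additive setting where those theorems apply.
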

\begin{proof}
By Proposition \ref{prop:plusforms} and the right exactness of localization and quotients it suffices to prove these statements at the level of the polynomial forms introduced in Section \ref{sec:positive HK}. Claim (2) then follows from Claim (1) by an application of Nakayama's lemma, noting that the graded components of the modules are finitely generated $\cR$-modules. Claim (3) then follows from Claim (2) by the exactness of localization.

Thus it remains to prove Claim (1).  First note that the source $\qform{(\eN \DN^+ \eN)}$ is commutative, and identifies with the space of diagonally invariant polynomials 
$\kappa[\cX^+,\cY^+]^{\SN} := \kappa[\bar X_1, \ldots,  \bar X_N, Y_1, \ldots , Y_N]^{\SN}$, via the  
map which takes an invariant polynomial $p$ to $\eN \cdot p \cdot \eN$.
On the other hand, the target $\qform{(F_N(\HKuniv^+))}$ is also commutative, and identifies with
$(\cO(Mat_N)/I)^G = \cO(Mat_N)^G/I^G$,
where $I$ is the ideal generated by $\qform{C(\ad)}$. According to \cite{Gan-Ginzburg}[Theorem 1.2.1], the ideal $I^G$ is reduced, and this algebra identifies with the coordinate ring $\cO(\Comm_N)^G$ of the variety of pairs commuting matrices modulo simultaneous conjugation.

Putting these observations together, we may identify $\qform{\eSWeN^+}$
with a certain algebra map
\[
s: \kappa[\cX^+,\cY^+]^{\SN} \to \cO(\Comm_N)^G.
\]
Let us unpack the definition of the map in this setting. First, recall that given a pair of commuting matrices $\bar A,B \in \End(V)$, there is a natural algebra map
\begin{align*}
\kappa[\cX^+,\cY^+] &\to \End(V^{\otimes N})\\
p &\mapsto p_{\bar A,B}
\end{align*}
defined by the property that $(\bar X_i)_{\bar A,B}$ (respectively $(Y_i)_{\bar A,B}$) acts by $\bar A$ (respectively $B$) in the $i$th tensor factor. When $p \in \kappa[ \cX^+,\cY^+]^{\SN}$ is an invariant polynomial, $p_{ \bar A,B}$ acts by a scalar on the antisymmetric tensors $\wedgeN{N} 
\hookrightarrow V^{\otimes N}$. This scalar is the value of $s(p)$ on the pair of matrices $( \bar A,B) \in \Comm_{N}$. 

On the other hand, by \cite{Haiman-diagonal}[Proposition 6.2.1], the natural map induced by restriction to diagonal matrices induces an isomorphism
\[
r:\cO(\Comm_N)^G \to \cO(\ft \times \ft)^{\SN} \cong \kappa[\cX^+,\cY^+]^{\SN}.
\]
We claim that the the maps $r$ and $s$ are inverse to one another, and thus $s$ is an isomorphism as desired. 
Indeed, unwinding the definitions, the claim boils down to the fact that if $ \bar A=diag(a_1, \ldots , a_N)$ and $B= diag(b_1, \ldots , b_N)$ are diagonal matrices, then 
\[
s(p)( \bar A,B) = p(a_1, \ldots a_N,b_1, \ldots , b_N),
\]
which can be readily checked using the above description of the map $s$.
\end{proof}

In particular, combining Theorem \ref{thm:SWsurjective} with Theorem \ref{thm:SWinj}, we obtain that the maps $\Rform{(\eSWeN)}$ and $\Kform{(\eSWeN)}$ are isomorphisms, thus proving Theorem \ref{mainthm:HKuniv}.

\renewcommand{\Dist}{\operatorname{Dist}}

\section{Endomorphisms of $\HKe$ via shift isomorphism
} \label{sec:End HK via shift}

Throughout this section we fix $G=\GLN$, and set $n=N$, 
and we will assume $q=t^{-2}$.   We therefore adopt the shorthand from Notation \ref{not:parameters-all-together}. We also work over a base field $\K$. We sometimes abbreviate $\Y$ for $\K[\Y]$ and refer to $\Y$-weight spaces.  We discuss the modification of statements and their proofs for $\SLN$ at the end of this section. 

Sections \ref{sec:End HK via shift}  and \ref{sec:End via intertwiners} contain proofs and computations in the DAHA that will be used to prove the main theorems of this paper, via the functors $\ee F_N$ and $\F_N$.

Let us first recall the following basic categorical fact (see e.g. \cite[Appendix A]{G}).
\begin{proposition}\label{prop:barrbeck}
Let $\cC$ be a presentable abelian category and $P$ a compact projective object of $\cC$. Let $R=\End_\cC(P)^{\op}$. Then the natural functor 
\[F = \Hom_\cC(P,-): \cC \to R\modu\]
has a fully faithful left adjoint $F^L$, whose essential image is full subcategory of $\cC$ generated by $P$. In particular, given any quotient object $P\to Q$ we have an isomorphism
\[
\End_\cC(Q) \cong \End_{R}(F(Q)).
\]
\end{proposition}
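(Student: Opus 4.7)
The plan is to construct $F^L$ as a quantum analogue of tensor product with $P$, and then verify fully faithfulness using the compactness and projectivity of $P$. The final claim about quotients then follows by a diagram chase.

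First I would define $F^L \colon R\modu \to \cC$ on a right $R$-module $M$ by the formula $F^L(M) = P \otimes_R M$, realized as the coequalizer in $\cC$ of the two natural maps
\[
\coprod_{r \in R} (P)_r \rightrightarrows \coprod_{m \in M} (P)_m,
\]
which exists because $\cC$ is presentable (in particular, cocomplete). A direct check of the universal property shows
\[
\Hom_\cC(F^L(M), X) \cong \Hom_R(M, \Hom_\cC(P,X)) = \Hom_R(M, F(X)),
\]
so $F^L \dashv F$.

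Next I would show the unit $\eta_M \colon M \to F(F^L(M))$ is an isomorphism for every $M \in R\modu$. For $M = R$ this is obvious, since $F^L(R) = P$ and $F(P) = \End_\cC(P)^{\op} = R$. The crucial point is that both $F$ and $F^L$ preserve all (small) colimits: $F^L$ because it is a left adjoint, and $F$ because $P$ is compact (preserving filtered colimits), projective (preserving cokernels), and $F$ preserves direct sums (again by compactness). Since every object of $R\modu$ is a colimit of copies of $R$, naturality of $\eta$ forces $\eta_M$ to be an isomorphism in general. This yields that $F^L$ is fully faithful, and the essential image is the subcategory of $\cC$ generated under colimits by $P$ (which, since $P$ itself lies in the image and the image is closed under colimits, is precisely the subcategory generated by $P$).

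For the final claim, suppose $\pi\colon P \twoheadrightarrow Q$ is a quotient. Consider the counit $\varepsilon_Q \colon F^L F(Q) \to Q$. Applying $F^L F$ to $\pi$ and using naturality together with $F^L F(P) \cong P$ (via $\varepsilon_P = \mathrm{id}$), we obtain a commutative square which factors $\pi$ as
\[
P \;\xrightarrow{\;F^L F(\pi)\;}\; F^L F(Q) \;\xrightarrow{\;\varepsilon_Q\;}\; Q.
\]
Since $\pi$ is an epimorphism, so is $\varepsilon_Q$. On the other hand, the triangle identities for the adjunction $F^L \dashv F$ combined with the fact that $\eta$ is an isomorphism (so the adjunction is a reflection) imply that $\varepsilon_Q$ is a monomorphism for \emph{every} $Q$; hence $\varepsilon_Q$ is an isomorphism. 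Therefore $Q$ lies in the essential image of $F^L$, and fully faithfulness of $F^L$ (equivalently, the resulting equivalence between the essential image and $R\modu$) gives
\[
\End_\cC(Q) \;\cong\; \End_\cC\bigl(F^L F(Q)\bigr) \;\cong\; \End_R\bigl(F(Q)\bigr),
\]
as required.

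The main obstacle is establishing that $\eta$ is an isomorphism; once this is in hand, the rest is formal. This in turn hinges on verifying that $F$ commutes with all colimits, which is exactly where the hypotheses of compactness and projectivity of $P$ are consumed.
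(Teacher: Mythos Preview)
Your construction of $F^L$ and the proof that the unit $\eta$ is an isomorphism (hence $F^L$ is fully faithful with essential image the colimit-closure of $P$) is correct and is essentially the approach the paper takes; compare the Remark immediately following the proposition, where $F^L$ is described via presentations $\coker(R^{\oplus I}\to R^{\oplus J})\mapsto \coker(P^{\oplus I}\to P^{\oplus J})$.

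The gap is in your last paragraph. The claim that ``$\varepsilon_Q$ is a monomorphism for \emph{every} $Q$'' is false. The triangle identity together with $\eta$ being an isomorphism only gives that $F(\varepsilon_Q)$ is an isomorphism; since $F=\Hom_\cC(P,-)$ does not reflect monomorphisms, this says nothing about $\varepsilon_Q$ itself. Concretely, take $\cC$ to be representations of the $A_2$ quiver $1\to 2$ over a field $k$, let $P=P_1=(k\xrightarrow{\id}k)$ be the projective at vertex~$1$, and let $Q=S_1=(k\to 0)$ be the simple at vertex~$1$. Then $Q$ is a quotient of $P$, $R=k$, $F(Q)=k$, $F^L F(Q)=P_1$, and $\varepsilon_Q\colon P_1\to S_1$ is the quotient map, which is not a monomorphism. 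So your argument that $\varepsilon_Q$ is an isomorphism breaks down, and with it the route to the final endomorphism comparison.

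The paper's remedy (spelled out in the Remark) is not to prove $\varepsilon_Q$ is mono in general, but to observe that for $Q$ \emph{already known} to lie in the subcategory generated by $P$ --- e.g.\ when $Q=\coker(P^{\oplus I}\to P^{\oplus J})$ --- one gets $\varepsilon_Q$ an isomorphism directly, by applying $F^LF$ to the presentation and using $\varepsilon_P=\id$ together with right exactness. This is exactly the situation in the applications: the Hotta--Kashiwara modules $\HKchi$ are quotients of $\HKuniv$ by ideals generated by elements of $\End(\HKuniv)$, hence are cokernels of maps $\HKuniv^{\oplus I}\to\HKuniv$. If you replace your mono claim with this presentation argument, the proof goes through.
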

\begin{remark}\label{rem:adjoint}
The left adjoint $F^L$ is very explicit. Given an $R$-module $L$, choose a presentation $L = \coker(R^{\oplus I} \to R^{\oplus J})$. Then $F^R(M)$ is given by $\coker (P^{\oplus I} \to P^{\oplus J})$ (recall that $R=\End(P)^{\op}$, so the morphisms in the two cases are given by the same data). It follows that if $M = \coker (P^{\oplus I} \to P^{\oplus J})$ is in the subcategory generated by $P$, then the counit map is an isomorphism:
\[
F^LF(M) \cong M.
\]
\end{remark}

Applying Proposition \ref{prop:barrbeck} to the category Theorem \ref{mainthm:HKuniv}.

\begin{corollary}\label{cor:Dq to HN}
    The functor
    \[
     \ee F_N: \Dqstr{G} \to \ee \HH_N \ee\modu
    \]
    has a fully faithful left adjoint $(eF_N)^L$ whose essential image is the subcategory generated by $\HKuniv$. In particular, given any quotient $M$ of $\HKuniv$, we have an isomorphism of algebras
    \[
    \End_{\Dqstr{G}}(M) \cong \End_{\ee \HH_N \ee}(\ee F_N(M)).
    \]
\end{corollary}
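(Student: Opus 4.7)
The plan is to recognize Corollary~\ref{cor:Dq to HN} as a direct application of Proposition~\ref{prop:barrbeck} with $P=\HKuniv$ and $\cC=\Dqstr{G}$. The first step is to identify $\ee F_N$ with $\Hom_{\Dqstr{G}}(\HKuniv,-)$ as functors landing in $\ee\HH_N\ee$-modules. Since $\ee=\eN$ is the sign idempotent of $\Hfn$ acting on $V^{\otimes N}$, we have $\ee\cdot V^{\otimes N}\cong\wedgeN{N}$, hence
\[
\ee F_N(M)\;=\;\Hom_{U_\q(\g)}\bigl(\wedgeN{N},\,\wedgeN{N}\otimes M\bigr)\;\cong\; M^G.
\]
Combined with Proposition~\ref{prop: Hom HK gives invariants}, this gives a natural isomorphism $\ee F_N(M)\cong\Hom_{\Dqstr{G}}(\HKuniv,M)$; the compatibility of the right $\ee\HH_N\ee$-action on the source with the $\End(\HKuniv)^{\op}$-action on the target is exactly the content of Theorem~\ref{mainthm:HKuniv}, via the Schur--Weyl map $\eSWeN$.

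Next I would verify that $\HKuniv$ is compact projective in $\Dqstr{G}$. Projectivity is immediate from Proposition~\ref{prop: Hom HK gives invariants}. Compactness reduces, under the identification above, to the statement that the functor of $U_\q(\g)$-invariants on $\Repq(G)$ preserves filtered colimits; this holds because any invariant vector in a filtered colimit of integrable $U_\q(\g)$-modules already lifts to some finite stage of the diagram, and the forgetful functor $\Dqstr{G}\to\Repq(G)$ creates filtered colimits.

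With these two ingredients in hand, Proposition~\ref{prop:barrbeck} applied to $P=\HKuniv$ and $R=\End(\HKuniv)^{\op}=\ee\HH_N\ee$ immediately produces the fully faithful left adjoint $(\ee F_N)^L$ whose essential image is the full subcategory of $\Dqstr{G}$ generated by $\HKuniv$. For the ``in particular'' clause, any quotient $M$ of $\HKuniv$ lies in this generated subcategory, so by Remark~\ref{rem:adjoint} the counit $(\ee F_N)^L\bigl(\ee F_N(M)\bigr)\to M$ is an isomorphism; full faithfulness of $(\ee F_N)^L$ then yields the desired algebra isomorphism $\End_{\Dqstr{G}}(M)\cong\End_{\ee\HH_N\ee}(\ee F_N(M))$.

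I do not anticipate a genuine obstacle here: the corollary is essentially a formal packaging of the main theorem. The one point requiring slight care is the compatibility of module structures needed to identify $\ee F_N$ with $\Hom_{\Dqstr{G}}(\HKuniv,-)$ as a functor to $\ee\HH_N\ee\modu$ rather than merely to vector spaces, but this compatibility is built into the construction of $\eSWeN$ in Section~\ref{sec:EndHK}.
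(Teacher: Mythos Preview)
Your proposal is correct and is precisely the paper's intended argument: the paper's proof consists of the single line ``Applying Proposition~\ref{prop:barrbeck} to the category [via] Theorem~\ref{mainthm:HKuniv}'', and you have simply unpacked what this entails. Your identification of $\ee F_N$ with $\Hom_{\Dqstr{G}}(\HKuniv,-)$ via Proposition~\ref{prop: Hom HK gives invariants} and the explicit check of compactness are details the paper leaves implicit, but they are exactly the verifications needed.
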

\begin{remark}
    It is proved in \cite{GJVY} that when $G=\GLN$, the functor $\ee F_N$ is conservative (as well as the functor $F_N$ itself) , and thus an equivalence. This is not true in the $SL_N$ case. 
    %In general (that is, also in the $\SLN$-case), this functor restricts to an equivalence on the subcategory generated by $\HKuniv$. 
\end{remark}
\subsection{The shift isomorphism} \label{sec:shift}
The shift isomorphism \cite{Marshall1999,Cherednik1995} is an isomorphism between the antispherical DAHA and the spherical DAHA with ``shifted'' parameters:\footnote{More precisely, following \cite{Cherednik1995} and \cite{Marshall1999}, the authors of \cite{Gorsky-Kivinen-Oblomkov} prove a shift isomorphism relating the anti-spherical double affine Hecke algebra to the spherical double affine Hecke algebra, upon trigonometric degeneration.  However the proofs apply verbatim to the non-degenerate setting.}
\[
\ee \HGN \ee \cong \ee^+ \HH_N^{\GL}({q,t q^{1/2}})\ee^+
\]
in our conventions. We will need this isomorphism at our specialization $q=t^{-2}$. Namely, taking the field $\K$ and $\DAHA{N}{N}$ as in Section \ref{sec:EndHK}, we obtain an isomorphism of $\K$-algebras
\[
\ee \DN \ee \cong \ee^+ \SmashDq \ee^+ = \Coinvq.
\]

The algebra $\cD_q(H)$ of $q$-difference operators on the maximal torus $H$ of $G$ is presented here as the group algebra of the doubled weight lattice $\Lambda \oplus \Lambda$ of $G$, with multiplication twisted by the symplectic pairing $\omega$ canonically attached to the symmetric Cartan pairing on $\Lambda$. 
Further, the shift isomorphism sends $\SymY$ to $\SymY$, where we have identified 
$i_\cB(\cO_q({H})) $ or $\K[0 \oplus \Lambda]$  with $\K[\Y]$.  Below we will identify $i_\cA(\cO_q({H}))$ or or $\K[\Lambda \oplus 0]$ with $\K[\X]$.
(It is important we express characters of $\SymY$ in terms of the loop parameter $q$ and not the quadratic parameter since this parameter changes under the shift.) 
%See \cite{Marshall1999}. 

Let us also recall that there is a Morita equivalence between $\Coinvq$ and $\SmashDq$. Indeed, this follows from Theorem 2.4 of \cite{Montgomery1980}, noting that $\DqqG{H}$ 
is simple (when $q$ is not a root of unity as in our case). Consider the composite functor
 \[
\Upsilon: \Dqstr{G} \xrightarrow[\ee F_N]{ \ } \ee\DN\ee\modu  \xrightarrow[\mathrm{shift}]{\sim} \Coinvq\modu \xrightarrow[\mathrm{Morita}]{\sim} \SmashDq\modu
\] 
Let us record the consequence of all of these observations, which will be used to compute the endomorphism algebras of Hotta-Kashiwara modules. 
\begin{corollary}\label{cor:Dq to smash}
    The functor
    \[
     \Upsilon: \Dqstr{G} \to \SmashDq\modu
    \]
    has a fully faithful left adjoint $\Upsilon^L$ whose essential image is the subcategory generated by $\HKuniv$. In particular, given any quotient $M$ of $\HKuniv$, we have an isomorphism of algebras
    \[
    \End_{\Dqstr{G}}(M) \cong \End_{\SmashDq}(\Upsilon(M)).
    \]
\end{corollary}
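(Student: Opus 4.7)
The plan is to deduce this corollary directly from Corollary \ref{cor:Dq to HN} by composing with two equivalences of abelian categories on the target side, exactly mirroring the definition of $\Upsilon$ itself.

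First, I would invoke the shift isomorphism $\ee\DN\ee \cong \Coinvq$ of $\K$-algebras, which induces an equivalence of module categories $\ee\DN\ee\modu \simeq \Coinvq\modu$. Equivalences of module categories preserve compact projective generators, fully faithful adjoints, essential images, and endomorphism algebras, so transporting Corollary \ref{cor:Dq to HN} along this equivalence yields a factorization of the functor $\Dqstr{G} \to \Coinvq\modu$ whose left adjoint is fully faithful with essential image the subcategory generated by $\HKuniv$.

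Second, I would apply the Morita equivalence $\Coinvq\modu \simeq \SmashDq\modu$ recalled above (which follows from \cite{Montgomery1980} since $\DqqG{H}$ is simple when $q$ is not a root of unity). Composing the two equivalences with $\ee F_N$ recovers the functor $\Upsilon$ by its very definition, and transports the left adjoint from Corollary \ref{cor:Dq to HN} to a fully faithful left adjoint $\Upsilon^L$ of $\Upsilon$ with the same essential image (namely, the subcategory generated by $\HKuniv$, since this subcategory is intrinsic to $\Dqstr{G}$ and is not affected by what happens on the target).

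Finally, for any quotient $M$ of $\HKuniv$, the isomorphism of endomorphism algebras follows by composing the three identifications:
\[
\End_{\Dqstr{G}}(M) \;\cong\; \End_{\ee\DN\ee}(\ee F_N(M)) \;\cong\; \End_{\Coinvq}(\cdot) \;\cong\; \End_{\SmashDq}(\Upsilon(M)),
\]
where the first isomorphism is Corollary \ref{cor:Dq to HN} and the remaining two come from the two equivalences. Since all the conceptual content (the compact projectivity of $\HKuniv$, the computation of its endomorphism algebra via Theorem \ref{mainthm:HKuniv}, the shift isomorphism, and the Morita equivalence) has already been established, there is no real obstacle here; the only bookkeeping is to check that the composite functor obtained from Corollary \ref{cor:Dq to HN} agrees on the nose with $\Upsilon$, which holds by construction.
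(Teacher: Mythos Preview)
Your proposal is correct and is exactly the argument the paper has in mind: the corollary is stated as ``the consequence of all of these observations,'' namely Corollary~\ref{cor:Dq to HN} transported along the shift isomorphism and the Morita equivalence that define $\Upsilon$. There is nothing to add.
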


In particular, applying Corollary \ref{cor:Dq to smash} to $\HKuniv$ itself, we have an isomorphism of algebras (thus giving a $\q$-deformed Levasseur-Stafford isomorphism as in (\ref{eq:LS2})): 

\begin{equation}\label{eq:LS2}
\End_{\Dqstr{G}} (\HKuniv) 
\cong \Coinvq.
\end{equation}

\begin{remark}
We expect that there exists an analogous functor $\Upsilon$ and an isomorphism (\ref{eq:LS2}) for an arbitrary connected reductive group $G$, even though there is no analogue of the functor $F_N$. 
\end{remark}

In the remainder of this paper we will present two different proofs of each of Theorem \ref{mainthm:Springer} and Theorem \ref{mainthm:Springerchi} using Corollary \ref{cor:Dq to HN} and Corollary \ref{cor:Dq to smash} respectively. 

\subsection{Proof of Theorem \ref{mainthm:Springer} via the shift isomorphism} \label{subsec:End HK via shift}

To lighten notation in this section,  write
\begin{align*}
    &\smashD = \smashD_N = \SmashDq \text{  as in Corollary \ref{cor:Dq to smash},} 
    \\
    &\coinv =\coinv_N = \Coinvq \text{ as in equation \eqref{eq:LS2}} \\
    &\smashY = \smashY_N =\SmashY \subseteq \smashD \\
    &\SymSN = \SymS = \SymY \otimes \K[\SN]  \subseteq \smashY,
\end{align*}
where we have identified $i_\cB(\OqG{H}) $  with $\K[\Y]$.

Proposition \ref{prop:Y and c}  and Corollary \ref{cor:epsilon as qrho} give the following corollary.

\begin{corollary}\label{cor:eqrho}
\begin{enumerate}
    \item 
We have isomorphisms,
\[
\ee F_N(\HKe)\cong \ee \Ind_\SH^\DAHA{N}{N}(\epsilon \boxtimes \sgn) \cong \ee\Ind_{\Y}^{\DAHA{N}{N}}(\qrho)
\cong \Ind_{\SymY}^{\ee \DN \ee} \{ \qrho\},
\]
where $\qrho(Y_i) := q^{i-1} = t^{2-2i}$, or  $\qrho = \trho$
corresponds to \[\atup =(1, \cdots, q^{N-1})= (t^0,t^{-2},\ldots, t^{2-2N}).\]

\item 
More generally, if $\chi$ corresponds to $\atup$ via \eqref{eq:SymB} and is transverse, i.e., $i \neq j \implies a_i \neq a_j$, then $\ee F_N(\HKchi) \cong\Ind_{\SymY}^{\ee \DN \ee} \{ \atup\}$.
\end{enumerate}
\end{corollary}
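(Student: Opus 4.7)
The plan is to deduce all of the isomorphisms from the bimodule version of Theorem~\ref{mainthm:HKuniv} together with the exactness of $\ee F_N$. Concretely, Theorem~\ref{mainthm:HKuniv} (via Theorems~\ref{thm:SWinj}--\ref{thm:SWsurjective}) gives an isomorphism $\eSWeN\colon \ee\DN\ee\xrightarrow{\sim} \HKuniv^G \cong \ee F_N(\HKuniv)$, and Proposition~\ref{prop:Y and c} upgrades this to an isomorphism of $\ee\DN\ee$-$\SymY$-bimodules once we identify $\SymY \cong \cB^G$ by $s_{(1^k)}\mapsto \cas k$. Corollary~\ref{cor:epsilon as qrho} moreover tells us that under this identification the counit $\epsilon\colon\cB^G\to\K$ is sent to the character $\qrho$ of $\SymY$.

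The next step is to show that $\ee F_N$ commutes with the right tensor product $-\otimes_{\cB^G}\K(\chi)$. First I would observe that $F_N(M)=\Hom_{U_\q}(\wedgeN{N},V^{\otimes N}\otimes M)$ is the composition of the exact functor $V^{\otimes N}\otimes -$ with $\Hom_{U_\q}(\wedgeN{N},-)$, which is exact over the fraction field $\K$ by semisimplicity of $\Repq(G)$. Applying the projector $\ee$ preserves exactness, so $\ee F_N$ is exact; combining this with the fact that $\cB^G\subset \Dq^G$ is itself $G$-invariant (so that $(\HKuniv\cdot C(\chi))^G = \HKuniv^G\cdot C(\chi)$) yields
\[
\ee F_N(\HKchi) \;\cong\; \ee F_N(\HKuniv)\otimes_{\cB^G}\K(\chi) \;\cong\; \ee\DN\ee \otimes_{\SymY} \K(\chi) \;=\; \Ind_{\SymY}^{\ee\DN\ee}\{\chi\}.
\]
This is exactly the final isomorphism in part~(1) (taking $\chi=\epsilon$, which via Corollary~\ref{cor:epsilon as qrho} corresponds to $\{\qrho\}$), and it also directly yields part~(2) since that part only requires this last identification.

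For the remaining two isomorphisms in part~(1), I would work purely algebraically inside the DAHA. The factorization $\SH=\SymY\otimes\Hf$ and the identity $\DN\otimes_{\Hf}\K_{\sgn} = \DN\cdot\ee$ give a canonical iso
\[
\Ind_\SH^{\DN}(\{\chi\}\boxtimes\sgn) \;=\; \DN\otimes_{\SymY\otimes\Hf}(\K_\chi\otimes\K_\sgn) \;\cong\; \DN\ee \otimes_{\SymY}\K(\chi),
\]
so multiplying on the left by $\ee$ gives $\ee\Ind_\SH^{\DN}(\{\chi\}\boxtimes\sgn) \cong \Ind_{\SymY}^{\ee\DN\ee}\{\chi\}$. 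Finally, Theorem~\ref{thm:Ind SH} provides $\Ind_\SH^{\DN}(\{\qrho\}\boxtimes\sgn)\cong \Ind_\Y^{\DN}(\qrho)$ as $\DN$-modules, once we verify that $\qrho = (1,q,\ldots,q^{N-1}) = (t^0,t^{-2},\ldots,t^{2-2N})$ is descending in the sense of Definition~\ref{def: reverse right order}: indeed $a_i/a_j = t^{2(j-i)}$ with $j-i\ge 0$ for $i\le j$. Applying $\ee$ gives the middle isomorphism of part~(1).

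The main subtle point is the commutation of $\ee F_N$ with the cokernel defining $\HKchi$; everything else is either a direct consequence of established bimodule compatibility or a short DAHA-module calculation. No genuinely new computation is needed beyond verifying the descending condition on $\qrho$, which is immediate.
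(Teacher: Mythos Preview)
Your proposal is correct and follows the same route as the paper, which simply records that the corollary follows from Proposition~\ref{prop:Y and c} and Corollary~\ref{cor:epsilon as qrho}; you have merely made explicit the ingredients that the paper leaves tacit (the isomorphism $\eSWeN$ from Theorem~\ref{mainthm:HKuniv}, exactness of $\ee F_N$ to pass to the quotient by $C(\chi)$, and Theorem~\ref{thm:Ind SH} for the middle isomorphism after checking that $\qrho$ is descending). Your observation that the transversality hypothesis in part~(2) plays no role in establishing the displayed isomorphism is also correct---it is stated only because that is the case used downstream.
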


We are now ready to state the main result of this section. 
The results from Section \ref{sec:EndHK} and Corollary \ref{cor:eqrho} imply that 
\begin{align} \label{eq:End HK to End eDe}
 \End(\HKe) \cong
    \End_{\ee \DN \ee}(\Ind_{\SymY}^{\ee \DN \ee} \{\qrho\}) , 
\end{align} 
which we show in the proposition below is isomorphic to  the group algebra of the symmetric group.

\begin{proposition} \label{prop:End smash}
    $\End_{\ee \DN \ee}(\Ind_{\SymY}^{\ee \DN \ee} \{\qrho\}) \cong \K[\SN]^{\op}$
\end{proposition}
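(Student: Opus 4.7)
The plan is to transport the problem via the shift isomorphism and a standard Morita equivalence to a module over the smash product $\smashD = \SmashDq$, where the module structure becomes transparent.  The shift isomorphism identifies $\ee\DN\ee \cong \coinv$ and preserves the subalgebra $\SymY$, so $\Ind_{\SymY}^{\ee\DN\ee}\{\qrho\}$ corresponds to $\coinv \otimes_{\SymY} \{\qrho\}$.  Composing with the Morita equivalence $\coinv\modu \simeq \smashD\modu$ given by $M \mapsto \smashD\,\ee^+ \otimes_\coinv M$, and using the $(\smashD,\coinv)$-bimodule identification $\smashD\,\ee^+ \cong \cD_q(H)$ (with $\SN$ acting on the left by its natural permutation of $\cD_q(H)$), the module transports to the $\smashD$-module $\cD_q(H) \otimes_{\SymY} \{\qrho\}$, on which $\cD_q(H)$ acts by left multiplication and $\SN$ by the permutation action.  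Since these equivalences preserve endomorphism algebras, it suffices to compute $\End_\smashD\bigl(\cD_q(H) \otimes_{\SymY} \{\qrho\}\bigr)$.

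Next, decompose this module as a $\cD_q(H)$-module.  Because $q = t^{-2}$ is not a root of unity, the $\SN$-orbit of $\qrho = (1, q, \ldots, q^{N-1})$ has $N!$ distinct elements and $\K[\Y] \otimes_{\SymY} \{\qrho\}$ splits as a direct sum of one-dimensional $\K[\Y]$-modules indexed by this orbit.  Tensoring on the left with $\cD_q(H)$ yields
\[
\cD_q(H) \otimes_{\SymY} \{\qrho\} \;\cong\; \bigoplus_{\sigma \in \SN} M_\sigma,
\]
where $M_\sigma$ is the simple $\cD_q(H)$-module generated by a $\K[\Y]$-weight vector of weight $\sigma(\qrho)$.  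The arithmetic progression form of $\qrho$ ensures that the full set of $\K[\Y]$-weights appearing in $M_\sigma$ is the lattice $q^{\Z^N}$ independently of $\sigma$, so all the $M_\sigma$ are pairwise isomorphic via explicit multiplication by elements of $\K[\X]$.  Moreover, the $\SN$-action permutes the summands regularly: $w \cdot M_\sigma = M_{w\sigma}$.

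For the final computation, a $\smashD$-endomorphism of $\bigoplus_\sigma M_\sigma$ is precisely a $\cD_q(H)$-linear endomorphism commuting with $\SN$.  Schur's lemma together with the $\cD_q(H)$-isomorphisms $M_\sigma \cong M_\tau$ gives $\End_{\cD_q(H)}(\bigoplus_\sigma M_\sigma) \cong \End_\K(\K[\SN])$; taking $\SN$-invariants under the regular representation action yields $\End_{\K[\SN]}(\K[\SN]_{\mathrm{reg}}) \cong \K[\SN]^{\op}$, the desired result.  The hard part will be to verify that the induced $\SN$-action on the matrix algebra $\End_\K(\K[\SN])$ is indeed the untwisted regular representation, i.e., that no $2$-cocycle appears from the choice of intertwiners $M_\sigma \xrightarrow{\sim} M_{w\sigma}$.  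This is guaranteed because $\SN$ acts on $\cD_q(H)$ by a genuine group action of algebra automorphisms, which permits a coherent (non-projective) choice of intertwiners throughout.
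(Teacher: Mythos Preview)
Your transport via the shift isomorphism and the Morita equivalence is correct and is exactly the paper's route; the identification of the resulting $\smashD$-module with $\cD_q(H)\otimes_{\SymY}\{\qrho\}$ is fine, and this module is indeed isomorphic to $\Ind_{\Y}^{\smashD}\qrho$.

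The gap is in your final paragraph. You correctly observe that $\End_{\cD_q(H)}(N)\cong\operatorname{Mat}_{N!}(\K)$ and that $\End_{\smashD}(N)$ is its $\SN$-invariants, and you correctly flag that the issue is identifying the $\SN$-action with conjugation by the \emph{regular} representation. But your justification---``$\SN$ acts on $\cD_q(H)$ by a genuine group action, so no cocycle''---only addresses whether the projective representation on the multiplicity space lifts; it does not show the lift is the regular one. The difficulty is that the $\SN$-action on $N$ is by $\cD_q(H)$-\emph{semilinear} maps ($w(a\cdot m)={}^w a\cdot wm$), so it does not descend along the Schur identification $N\cong M\otimes\K[\SN]$ to an action on the second factor. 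To repair this you would have to pick the coherent basis $u_\sigma:=X^{\beta(\sigma)}\,\sigma\cdot v_e$ of the $\qrho$-weight space (where $q^{\beta(\sigma)}=\qrho/\sigma(\qrho)$), check the cocycle identity $\beta(w)+w\cdot\beta(\sigma)=\beta(w\sigma)$, and verify that the corrected operators $\widetilde w:=X^{\beta(w)}w$ permute the $u_\sigma$ regularly. That works, but it is a genuine computation you have not done.

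The paper sidesteps this entirely by one extra observation: since $\qrho=(1,q,\ldots,q^{N-1})$ and $1^N=(1,\ldots,1)$ lie in the same $\AffSym$-orbit, one has $\Ind_{\Y}^{\smashD}\qrho\cong\Ind_{\Y}^{\smashD}1^N$. At $1^N$, which is $\SN$-fixed, the $1^N$-weight space of the induced module is literally $1^N\boxtimes\K[\SN]_{\mathrm{reg}}$, and Frobenius reciprocity gives
\[
\End_{\smashD}\bigl(\Ind_{\smashY}^{\smashD}(1^N\boxtimes\K[\SN])\bigr)\cong\Hom_{\smashY}\bigl(1^N\boxtimes\K[\SN],\,1^N\boxtimes\K[\SN]\bigr)\cong\End_{\K[\SN]}(\K[\SN])\cong\K[\SN]^{\op}
\]
with no cocycle analysis needed. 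Inserting this step in place of your last paragraph completes your argument.
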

\begin{proof}

 By Theorem \ref{thm:Ind SH}, for descending $\atup$ we have:
\begin{gather*}
    \ee \Ind_{\Y}^{\DN} \atup \cong \ee \Ind_{\SH}  \asgn = \Ind_{\SymY}^{\ee \DN \ee} \{\atup\},
\end{gather*}
and likewise a similar statement holds replacing $\ee$ with $\ee^+$ if the reverse $w_0(\atup)$ is descending. 
Note that for any $\sigma \in \SN$, and in particular $\sigma = w_0$, we have that 
$\Ind_{\Y}^{\smashY} \atup \cong \Ind_{\Y}^{\smashY} \sigma(\atup)$. 
Further for any affine permutation $w \in \AffSymn{N}$ we have $\Ind_{\Y}^{\smashD} \atup \cong \Ind_{\Y}^{\smashD} w(\atup)$
(recall the analogous statements for $\HY$ and $\DN$ are false, as seen in Example \ref{ex:non ssl}). 
In particular we have  an isomorphism,
\begin{align} \label{eq:1N}
  \Ind_{\Y}^{\smashD} w_0(\qrho)  \cong \Ind_{\Y}^{\smashD} 1^N  
\end{align}
where we have written $1^N$ for the $\K[\Y]$-module corresponding to $\atup = (1, 1, \cdots, 1)$ and will write  $\{ 1^N\}$ for $\{\atup\}$. 
It is easy to check the  module in \eqref{eq:1N} is $\Y$-semisimple, with each weight space of dimension $N!$.

Using the shift isomorphism, Morita equivalence, and \eqref{eq:1N} we  compute
\begin{align*}
   %% \End_{\DN}(\Ind_{\Y}^{\DN} \qrho)  & \cong 
    \End_{\ee \DN \ee}(\Ind_{\SymY}^{\ee \DN \ee} \{\qrho\})
    & \cong 
    \End_{\coinv}(\Ind_{\SymY}^{\coinv} \{\qrho\}) \\
    &\cong 
    \End_{\smashD}(\Ind_{\Y}^{\smashD} w_0(\qrho)) 
     \cong 
    \End_{\smashD}(\Ind_{\Y}^{\smashD} 1^N).
\end{align*}
Note 
$\Ind_{\Y}^{\smashY} 1^N \cong 1^N \boxtimes \K[\SN]$ is the module on which $\SN$ acts via the regular representation and all operators $(Y_i-1)$ vanish, for $1 \le i \le N$.  We now observe that the $1^N$ weight space of $\Ind_{\Y}^{\smashD} 1^N = \Ind_{\smashY}^{\smashD} \Ind_{\Y}^{\smashY}1^N$ is exactly $1^N \boxtimes \K[\SN]$.
To continue, the above endomorphism algebra is:
\begin{align*}
   \End_{\smashD}(\Ind_{\smashY}^{\smashD} 1^N  \boxtimes \K[\SN] )
    & \cong 
    \Hom_{\smashY}(1^N  \boxtimes \K[\SN] , \Res \Ind_{\smashY}^{\smashD} 1^N \boxtimes \K[\SN])
    \\
    & \cong 
    \Hom_{\smashY}(1^N  \boxtimes \K[\SN], 1^N \boxtimes \K[\SN])
    \\
    & \cong 
    \Hom_{\K[\SN]}( \K[\SN], \K[\SN]) \cong \K[\SN]^{\op}.
\end{align*}
\end{proof}
Proposition \ref{prop:End smash} with \eqref{eq:End HK to End eDe} complete the  proof of Theorem  \ref{mainthm:Springer}. %%\ref{thm:End qrho}. 
\begin{remark}\label{rem:Ind Y not ssl}
  It is important to observe in the proof above  that we cannot apply Theorem \ref{thm:Ind SH}  to $\Ind_{\Y}^{\smashD} 1^N$ as $\fij{i}{j} \mid_{1^N} = 0 $ at quadratic parameter $1$. Indeed the   module in \eqref{eq:1N} is not isomorphic to  $\Ind_{\SymY \otimes \SN}^{\smashY} \{ 1^N \} \boxtimes \trivial$. It is easy to check the latter module is not $\Y$-semisimple; and neither is $\Ind_{\SymY}^{\Y} \{1^N\}$.   
\end{remark}

\subsection{Proof of Theorem \ref{mainthm:decomp} via the shift isomorphism.}\label{sec:proof of decomp}  In order to prove  the $M_\lambda$ appearing 
in the statement of Theorem \ref{mainthm:decomp} are distinct indecomposable modules,
we first prove  the irreducibility of the indecomposable summands of $\Ind_{Y}^{\smashD} 1^N$ given by the minimal idempotents of $\K[\SN]^{\op}$. 
\begin{proposition} \label{prop:Springer-decomp}
    As a $\smashD$-$\K[\SN]$ bimodule, we have a decomposition
    $$ \Ind_{\Y}^{\smashD} 1^N \cong \bigoplus_{\lambda} L_\lambda \otimes S^\lambda
    $$
    where the direct sum is indexed by partitions $\lambda$ of $N$ and $S^\lambda$ is the corresponding irreducible for $\SN$.
    Furthermore $L_\lambda \cong \Ind_{\smashY} 1^N  \boxtimes  S^\lambda$ is a simple $\smashD$-module.  

    Consequently, as a $\ee \DN \ee$-$\K[\SN]$ bimodule, we have a decomposition
    $$ \Ind_{\SymY}^{\ee \DN \ee} \{ \qrho \} \cong \bigoplus_\lambda \bar L_\lambda \otimes S^\lambda
    $$
    where $\bar L_\lambda$ is a simple $\ee \DN \ee$-module.
\end{proposition}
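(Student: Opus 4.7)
The plan is to obtain the stated bimodule decomposition from the standard splitting of the regular representation of $\SN$, then prove each $L_\lambda$ is simple via a weight-space argument, and finally transfer the statement to $\ee \DN \ee$ via the shift isomorphism and Morita equivalence already invoked in the proof of Proposition \ref{prop:End smash}.

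First I would write $\Ind_{\Y}^{\smashD} 1^N \cong \Ind_{\smashY}^{\smashD}\bigl(\Ind_{\Y}^{\smashY} 1^N\bigr) \cong \Ind_{\smashY}^{\smashD}(1^N \boxtimes \K[\SN])$, where $\K[\SN]$ is the regular representation. Decomposing $\K[\SN] \cong \bigoplus_\lambda S^\lambda \otimes S^\lambda$ as an $\SN$-bimodule and applying the exact functor $\Ind_{\smashY}^{\smashD}$ to the left factor yields the claimed direct sum with $L_\lambda := \Ind_{\smashY}^{\smashD}(1^N \boxtimes S^\lambda)$. Intersecting this decomposition with the $1^N$-weight space of $\Ind_{\Y}^{\smashD} 1^N$, which was identified with $1^N \boxtimes \K[\SN]$ in the proof of Proposition \ref{prop:End smash}, one immediately reads off that $(L_\lambda)_{1^N} \cong 1^N \boxtimes S^\lambda$ as $\smashY$-modules.

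For simplicity, let $M \subseteq L_\lambda$ be a nonzero $\smashD$-submodule, and choose a nonzero $\Y$-weight vector $v \in M$. Since $\smashD \cong \cD_q(H) \# \SN$ with $\cD_q(H) \cong \K_\omega[\Lambda \oplus \Lambda]$, the subalgebra $\K[\X] \subseteq \cD_q(H)$ consists of units, and each $X^\mu$ shifts $\Y$-weights in an invertible fashion. Hence some $X^\mu \cdot v$ is a nonzero element of $M \cap (L_\lambda)_{1^N}$. The $\SN$-irreducibility of $(L_\lambda)_{1^N} \cong 1^N \boxtimes S^\lambda$ then forces $(L_\lambda)_{1^N} \subseteq M$, and since $L_\lambda$ is generated as a $\smashD$-module by its $1^N$-weight space (by the definition of the induced module), $M = L_\lambda$. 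Distinctness $L_\lambda \not\cong L_\mu$ for $\lambda \neq \mu$ is then immediate from comparing the $\SN$-irreps on the $1^N$-weight spaces.

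For the second statement, the shift isomorphism $\ee \DN \ee \cong \coinv$ together with the Morita equivalence $\coinv \simeq \smashD$ sends $\Ind_{\SymY}^{\ee \DN \ee} \{\qrho\}$ to $\Ind_{\Y}^{\smashD} 1^N$, so the decomposition transfers, and simplicity is preserved under Morita equivalence, yielding simple $\bar L_\lambda$. The main obstacle will be the simplicity step: namely, verifying rigorously that every $\Y$-weight appearing in $L_\lambda$ can be moved to $1^N$ by a single invertible element of $\K[\X]$. This requires explicit bookkeeping of the symplectic twist $\omega$ on $\Lambda \oplus \Lambda$ and crucially relies on $\q$ not being a root of unity so that the shifts $q^{\omega(\mu,\nu)}$ are never trivial.
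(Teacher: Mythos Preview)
Your proposal is correct and follows essentially the same route as the paper: decompose the regular representation of $\SN$, identify $L_\lambda = \Ind_{\smashY}^{\smashD}(1^N \boxtimes S^\lambda)$, prove simplicity by shifting an arbitrary weight vector back to the $1^N$-weight space via $X^{-\beta}$, and then transfer through the shift isomorphism and Morita equivalence. The ``obstacle'' you flag is dispatched in the paper in one line: the commutation relation $Y_i X_j = q^{\delta_{ij}} X_j Y_i$ in $\cD_q(H)$ gives an explicit $\Y$-weight basis $\{X^\beta \otimes v_T\}$ of $L_\lambda$ with weight $q^\beta$, so any weight vector of weight $q^\beta$ is already of the form $X^\beta \otimes v$ and $X^{-\beta}$ sends it to $1\otimes v$; no delicate bookkeeping of $\omega$ is needed.
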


\begin{proof}
As noted in the proof of Proposition \ref{prop:End smash}, 
$\Ind_{\Y}^{\smashD} 1^N = \Ind_{\smashY} 1^N \boxtimes \K[\SN]$ and so by decomposing the regular representation, we see it  has  summands  of the form
$$L_\lambda := \Ind_{\smashY} 1^N  \boxtimes  S^\lambda,$$
which occur with multiplicity $\dim S^\lambda = | \syt(\lambda)|.$
Indeed, if $\{v_T \mid T \in \syt(\lambda) \}$ is a basis of $S^\lambda$ indexed by standard Young tableaux, then $L_\lambda$ has $\Y$-weight basis given by $\{X^\beta \otimes v_T \mid \beta \in\Z^N, T \in \syt(\lambda)\}$.  In particular, the $\Y$-weight of $X^\beta \otimes v_T$ is $q^\beta$.  (Recall the relation $Y_i X_j = q^{\delta_{ij}} X_j Y_i$ for $G=\GLN$.) 
Any simple submodule of $L_\lambda$ must contain some nonzero weight vector. If it is of weight $q^\beta$ then it must have the form $X^\beta \otimes v$ for some nonzero $v \in S^\lambda$.  Then the submodule also contains $1 \otimes v = X^{-\beta} X^\beta \otimes v$ and hence is all of $L_\lambda$.  

Next,
via the Morita equivalence between $\smashD$ and $\coinv$, we have a similar decomposition of $\ee^+ \Ind_{\Y}^{\smashD} 1^N \cong \Ind_{\SymY}^{\coinv} \{\qrho\}$ as a $\coinv$-module.  Then via the shift isomorphism,
an analogous  decomposition holds for  the anti-spherical DAHA module $\Ind_{\SymY}^{\ee \DN \ee} \{ \qrho \} =  \ee \Ind_{\Y}^{\DN} \qrho$.
\end{proof}

\begin{remark}$\SLGL$ For $G=\SLN$,  one modifies the relations on the quantum torus $\SmashDq$ appropriately and replaces $1^N$ with $(Z^{1/N})^N$. This completes the proof of Theorem \ref{mainthm:decomp}.
\end{remark}

Applying the left adjoint $\Upsilon^L$ (or respectively $(eF_N)^L$) to the first (respectively, second) isomorphism in Proposition \ref{prop:Springer-decomp}, we obtain a corresponding decomposition (see Remark \ref{rem:adjoint}: 
\[
\Upsilon^L(\Upsilon \HKchi) \cong \HKchi \cong \bigoplus_\lambda \Upsilon^L(L_\lambda) \otimes S^\lambda.
\]
As $\Upsilon^L$ is fully faithful and additive, the $\Dq$-modules $M_\lambda := \Upsilon^L(L_\lambda)$ are indecomposable\footnote{As mentioned in the introduction, the results of \cite{GJVY} allow us to upgrade ``indecomposable'' to ``simple''. Indeed, we will show that the functor $\Upsilon$ is an equivalence for $G=GL_N$ and a projection onto a direct summand for $G=SL_N$. \label{foot:irred}} and pairwise non-isomorphic for distinct $\lambda$. This proves Theorem \ref{mainthm:decomp}.

\subsection{Proof of Theorem \ref{mainthm:Springerchi} via the shift isomorphism.} \label{sec:proof Springerchi}

 As in the case of $\epsilon$, we get a generalization of Theorem \ref{mainthm:decomp} for transverse $\chi$, using notation from Theorem \ref{mainthm:Springerchi}. 
 We found it more intuitive to first give the proof of the special case $\chi  = \epsilon$ and then introduce appropriate modification for transverse $\chi$.  
 The necessity for transversality is explained in Remark \ref{rem:Ind Y not ssl}. We give an example of non-transverse $\chi$ in Example \ref{ex:nilpotent end}.
 
 \begin{theorem} \label{thm:decomp-chi}
 Suppose that $\chi$ is in transverse position, i.e.,  that $\Stab(\chi) \cap \SN = \{ \id\} $. Then
\begin{align*} %\label{eq:HKchi decomp}
\End_{\ee\DN\ee}(\ee F_N(\HKchi)) &\cong \K[W_J]^{\op} \, \text{ and } \\
\ee F_N(\HKchi) &\cong  \bigoplus_{\lambdatup} L_\lambdatup \boxtimes S^\lambdatup,
\end{align*}
where $ W_J = \sigma^{-1} \Stab(\chi) \sigma \subseteq \SN$ is a standard parabolic subgroup for some $\sigma \in \AffSym$. The decomposition is as   $\ee \DN \ee$-$W_J$ bimodules, and
the direct sum is indexed by multi-partitions $\lambdatup$ of total size $N$ and ``shape" $J$,  and $S^{\lambdatup}$ is the corresponding irreducible for $W_J$.  $L_{\lambdatup}$ is an irreducible $\ee \DN \ee$-module. 
\end{theorem}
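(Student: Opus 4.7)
The plan is to follow the strategy of Propositions \ref{prop:End smash} and \ref{prop:Springer-decomp}, handling the general character $\chi$ by translating its lift within $\AffSym$ so that the relevant stabilizer becomes a standard parabolic sitting inside $\SN$. First, by the transversality hypothesis $\Stab_{\SN}(\atup) = \{\id\}$ (where $\atup$ is the lift of $\chi$), Corollary \ref{cor:eqrho}(2) gives $\ee F_N(\HKchi) \cong \Ind_{\SymY}^{\ee \DN \ee}\{\atup\}$, and applying the shift isomorphism followed by the Morita equivalence between $\coinv$ and $\smashD$ reduces the problem to computing $\End_{\smashD}(\Ind_{\Y}^{\smashD}\atup)$ along with its $W_J$-isotypic decomposition. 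Since the translation lattice $\Z^N \subseteq \AffSym$ is realized inside $\smashD = \cD_q(H) \# \SN$ via the $X^{\alpha}$, all of $\AffSym$ acts on induced modules by weight rescaling, so $\Ind_{\Y}^{\smashD}\atup \cong \Ind_{\Y}^{\smashD}(\sigma^{-1}\cdot\atup)$. Writing $\atup' := \sigma^{-1}\cdot\atup$, the assumption $\Stab_{\AffSym}(\atup) = \sigma W_J \sigma^{-1}$ combined with transversality then yields $\Stab_{\AffSym}(\atup') = W_J \subseteq \SN$.

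The key computation is to identify the $\atup'$-weight space of $\Ind_{\Y}^{\smashD}\atup'$ and thereby the endomorphism algebra. Using the PBW structure of $\smashD$, its $\Y$-weights are exactly the orbit $\AffSym \cdot \atup'$, with each weight space of dimension $|W_J|$; the $\atup'$-weight space is spanned by $\{w \otimes v : w \in W_J\}$ and identifies canonically with $\K[W_J]$. By Frobenius reciprocity, $\End_{\smashD}(\Ind_{\Y}^{\smashD}\atup') \cong \Hom_{\Y}(\atup', \Res_{\Y}^{\smashD}\Ind_{\Y}^{\smashD}\atup')$ equals this weight space as a vector space, and tracing through the composition of endomorphisms identifies the algebra structure as $\K[W_J]^{\op}$.

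For the decomposition and irreducibility of the summands, the decomposition of the regular representation $\K[W_J] \cong \bigoplus_{\lambdatup}(S^{\lambdatup})^{\oplus \dim S^{\lambdatup}}$ via primitive idempotents lifts along the identification $\End_{\smashD}(\Ind_{\Y}^{\smashD}\atup')^{\op} \cong \K[W_J]$ to the claimed $\smashD$-$W_J$-bimodule decomposition, with multiplicity space $L_{\lambdatup}$ satisfying $\dim L_{\lambdatup}[\atup'] = \dim S^{\lambdatup}$. To prove each $L_{\lambdatup}$ is simple, take a nonzero $\smashD$-submodule $M \subseteq L_{\lambdatup}$; since $\Y$ acts semisimply, $M$ contains a nonzero weight vector of some weight $w \cdot \atup'$, which after acting by $w^{-1} \in \AffSym \subseteq \smashD$ lies in $M \cap L_{\lambdatup}[\atup']$, a nonzero $W_J$-subspace of the simple $W_J$-module $L_{\lambdatup}[\atup'] \cong S^{\lambdatup}$; hence $L_{\lambdatup}[\atup'] \subseteq M$. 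Then each element $\widetilde{w} \in \AffSym \subseteq \smashD$ induces a linear isomorphism $L_{\lambdatup}[\mu] \xrightarrow{\sim} L_{\lambdatup}[\widetilde{w} \cdot \mu]$, so iterating generates all of $L_{\lambdatup}$ from the $\atup'$-weight space, and irreducibility transfers back to $\ee \DN \ee$ via the Morita equivalence.

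The main obstacle is the careful bookkeeping around the passage from $\atup$ to $\atup'$ and the identification of the right $W_J$-action on $L_{\lambdatup}[\atup']$ coming from $\End_{\smashD}(\Ind)^{\op}$ with the expected left $W_J$-action inside $\smashD$; this compatibility is precisely what ensures $\lambdatup$ labels irreducibles of $W_J$ compatibly with the decomposition of the regular representation, so that the bimodule structures on both sides of the final isomorphism match as stated.
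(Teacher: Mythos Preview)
Your proof is correct and follows essentially the same approach as the paper: reduce via the shift isomorphism and Morita equivalence to $\smashD$, translate $\atup$ by $\sigma^{-1}\in\AffSym$ so the stabilizer becomes the standard parabolic $W_J\subseteq\SN$, identify the $\atup'$-weight space with the regular representation $\K[W_J]$ to compute the endomorphism algebra, and prove irreducibility of the summands by pulling an arbitrary weight vector back to the $\atup'$-weight space. The only place you are slightly terser than the paper is in the passage through Morita equivalence, where one should note that transversality of $\atup$ is exactly what guarantees $\Ind_{\SymS}^{\smashY}(\{\atup\}\boxtimes\trivial)\cong\Ind_{\Y}^{\smashY}\atup$ (cf.\ Remark~\ref{rem:Ind Y not ssl}), so that $\Ind_{\SymY}^{\coinv}\{\atup\}$ really does correspond to $\Ind_{\Y}^{\smashD}\atup$; but you invoke transversality at the right moment and this is implicit in your reduction.
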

\begin{proof}
Using  \eqref{eq:SymB}, we identify $\chi$ with $\{\atup\}$, where we choose $\atup \in \K^N$ to be descending and have the further hypothesis  that $\atup$ is transverse, i.e. $a_i \neq a_j$ for $i \neq j$.  
Let $\btup = \sigma^{-1} \atup$ so that $\Stab(\btup) = W_J \subseteq \SN$. 
 We have $\Ind_\Y^\smashD \atup \cong \Ind_\Y^{\smashD} \btup$. 
The  transversality of $\atup$ ensures $\Ind_{\SymS}^{\smashY} \{\atup\} \boxtimes \trivial \cong \Ind_\Y^{\smashY} \atup \cong \Ind_\Y^{\smashY} \btup $.
(As in Remark \ref{rem:Ind Y not ssl}, we warn the reader that $\Ind_{\SymS}^{\smashY} \{\btup\} \boxtimes \trivial \not\cong \Ind_\Y^{\smashY} \btup$ (unless $W_J$ is trivial).)
Similar to the proof of Proposition \ref{prop:End smash}  above, which was the special case $\btup = 1^N$, the $\btup$ weight space of $\Ind_{\Y}^{\smashD} \btup$ is exactly $\btup \boxtimes \K[W_J]$, where $\K[W_J]$ denotes the regular representation of the parabolic subgroup $W_J$. Then the analogous computation of the $\Hom$ space yields that the endomorphism algebra in question is $\K[W_J]^{\op}$.

Note $W_J = \Sm{\eta_1} \times \Sm{\eta_2} \times \cdots \times \Sm{\eta_\ell} \subseteq \SN$ for a corresponding composition $(\eta_1, \dots, \eta_{\ell})$ of $N$ which we  say has shape $J$. Thus we see the irreducible representations of $W_J$ are multipartitions also of shape $J$, i.e. $\lambdatup = (\lambda^{(1)}, \dots, \lambda^{(\ell)})$ where $\lambda^{(i)}$ is a partition of size $\eta_i$. 

    $W_J$ has corresponding parabolic subalgebra $\smashY_J \subseteq \smashY$, generated by $W_J$ and $\Y$.  Then $\Ind_{\smashY_J}^{\smashY} \btup \boxtimes S^{\lambdatup}$ is an irreducible representation of $\smashY$.  The indecomposable summands of $\Ind_{\Y}^{\smashD} \btup$ are 
    $\Ind_{\smashY}^{\smashD} (\Ind_{\smashY_J}^{\smashY} \btup \boxtimes S^{\lambdatup}) =\Ind_{\smashY_J}^{\smashD} \btup \boxtimes S^{\lambdatup} $. 
    A nonzero simple submodule would have to contain a $\Y$-weight vector, say of weight $(q^{\beta_1} b_{u(1)}, \cdots, q^{\beta_N} b_{u(N)})$ for some $\beta \in \ZZ^N$ and $u \in \SN$. ( $\SLGL$ Again we write this for $G=\GLN$ and one modifies appropriately for $G=\SLN$.)  Recall $b_i \neq b_j \implies b_i/b_j \neq q^z$ for any $z \in \ZZ$.  This weight vector must then have the from $X^\beta \otimes v$ for some $v \in S^{\lambdatup}$. As before, the submodule then contains $X^{-\beta} X^\beta \otimes v = 1 \otimes v$ and hence is the whole module.
    Thus the corresponding decomposition into indecomposables given by the first part of Theorem \ref{thm:decomp-chi} is actually a decomposition into simples.
\end{proof}

As in Section \ref{sec:proof of decomp}, this proves Theorem \ref{mainthm:Springerchi} by applying the left adjoint $\Upsilon^L$.

\section{Endomorphisms of $\HKe$ via intertwiners} \label{sec:End via intertwiners}
The goal of this section is to give an alternate proof of Proposition \ref{prop:End smash}, and hence of Theorem \ref{mainthm:Springer}, avoiding use of the shift isomorphism and instead relying on the Morita equivalence of Proposition \ref{prop:morita} below. 
Throughout this section we work over  base ring  $\K$.
We work directly in the DAHA instead of the anti-spherical DAHA, which allows us to make more explicit calculations and to use intertwiners. 
Using intertwiners we construct  explicit endomorphisms $\Phi_w$ of $F_N(\HKe)$, defining a ring homomorphism,

\begin{align} \label{eq:End homom}
\Phi:\K[\Sym_N]^{op}&\to \End_{\DN}(F_N(\HKe))  \\      %\End_\Dq(\HKe),\\
w &\mapsto \Phi_w,
\end{align}
and we proceed to show it is an isomorphism.  
To connect the results in this section to Theorem \ref{mainthm:Springer}, we will require the following algebraic input:

\begin{proposition}[\cite{GJVY}]\label{prop:morita}
The sign idempotent $\eN$ defines a Morita equivalence between $\DAHA{N}{N}$ and $\eN\DAHA{N}{N}\eN$.
\end{proposition}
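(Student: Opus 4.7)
The proposition asserts that the sign idempotent $\eN \in \DAHA{N}{N}$ defines a Morita equivalence between $A := \DAHA{N}{N}$ and its corner algebra $\eN A \eN$. By the standard criterion (Morita's theorem), this is equivalent to showing that the two-sided ideal generated by $\eN$ exhausts $A$, i.e.\ $A\eN A = A$. The plan is to reduce this to a simplicity statement for the DAHA at our specific parameters.

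First I would verify that $\eN$ is itself nonzero in $A$. This is immediate from the explicit formula (\ref{eq:triv sign}): since our specialization has $t = \q$ with $\q$ not a root of unity, the denominator $[N]_{t^{-2}}!$ is a unit in $\K$, so $\eN$ is a well-defined nonzero idempotent in the finite Hecke subalgebra $\Hf_N(t) \subseteq A$. In particular the two-sided ideal $A\eN A$ is nonzero.

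Next I would appeal to simplicity of $A$ at our parameters. At the specialization $t = \q$, $q = \q^{-2n/N}$ (and $\Sq = \q^{1/N}$, $\Zprod = \Sq^{n(n-N^2)}$ in the $\SL$-case) with $\q$ transcendental (or generic enough to avoid finitely many algebraic conditions), the DAHA is known to be a simple algebra. In the $\GL$ case this follows from the classical simplicity theorem for generic DAHA parameters; one may verify our parameters lie in the simple locus by checking that no nontrivial central elements arise (e.g.\ $\pi^n$ satisfies $\pi^n Y_i = q^{-1} Y_i \pi^n$, which is a genuine noncommutation when $q \neq 1$). In the $\SL$ case, after imposing the relation $Z_1 \cdots Z_n = \Zprod$ and $\Spi^n = 1$, the resulting quotient is again simple at generic $\Sq$. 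Granted this, any nonzero two-sided ideal must equal $A$, so $A\eN A = A$ as required.

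The main obstacle, and presumably the technical heart of the argument in \cite{GJVY}, is the verification of simplicity at our precise specialization rather than at fully generic parameters, because $q$, $t$ and (in the $\SL$ case) $\Sq$, $\Zprod$ are all expressed as fixed monomials in the single variable $\q$. The alternative and perhaps cleaner strategy, avoiding the general simplicity statement, is to exhibit elements $a_i, b_i \in A$ explicitly with $\sum_i a_i \eN b_i = 1$. The intertwiner technology recalled in Section~\ref{sec:intertwiners} provides natural candidates: one can conjugate $\eN$ by $\nuj{w}$ for varying $w \in \AffSym$ to land on different $\Y$-weight configurations, and combine these using the Hecke-algebra resolution of $1 \in \Hf_N$ in terms of translates of primitive idempotents. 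Either route produces $A\eN A = A$ and hence the Morita equivalence, which by the standard dictionary yields the inverse equivalences $M \mapsto \eN M$ and $N \mapsto A\eN \otimes_{\eN A \eN} N$ between $A\modu$ and $\eN A \eN\modu$.
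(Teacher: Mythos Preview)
The paper does not actually prove this proposition: immediately after stating it, the authors write that analogous results are known for the rational degeneration \cite{BEG}, that they could not find a reference in the non-degenerate setting, and that a complete proof of a more general version is deferred to the forthcoming paper \cite{GJVY}. So there is no ``paper's own proof'' to compare your attempt against; the result is simply imported.

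That said, your attempt has a genuine gap. The reduction to $A\eN A = A$ via Morita's criterion is of course correct, and your observation that $\eN \neq 0$ is fine. But the crux is your appeal to simplicity of $\DAHA{N}{N}$ at the specialization $q = t^{-2}$ (and $\Sq = t^{1/N}$, $\Zprod = \Sq^{N(N-N^2)}$ in the $\SL$ case), which you do not establish. Checking that one candidate element $\pi^n$ fails to be central is very far from proving simplicity: trivial center does not imply simple, and in any case you have only ruled out one element. More importantly, the specialization in question lies on a codimension-one locus in $(q,t)$-space, which is precisely the regime where simplicity and spherical Morita equivalence are delicate; in the rational Cherednik setting \cite{BEG} this is exactly where the analogous equivalence can fail at special integer parameters. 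So ``generic simplicity'' does not apply here without further argument, as you yourself acknowledge.

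Your alternative suggestion---to produce an explicit identity $\sum_i a_i \eN b_i = 1$ using intertwiners---is only a gesture: you indicate neither which $w$'s to use, nor how the resulting elements combine to give $1$, nor how to handle the poles of $\nuj{w}$ at the relevant weights. In short, what you have written is a reasonable outline of two possible strategies, with the hard step left open in each; it is not a proof.
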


Analogous results to Proposition \ref{prop:morita} are known for the rational degeneration of $\DAHA{N}{N}$ (see e.g. \cite{BEG}), however we could not find a reference in the literature for this statement for $\DAHA{N}{N}$.  In our forthcoming paper \cite{GJVY}, we give a complete proof of a more general version of this statement.

\begin{remark}\label{rem:Morita}
The reader will note that Proposition \ref{prop:morita} could be applied to many of the results of Section \ref{sec:EndHK} to replace $\eN\DAHA{N}{N}\eN$ with $\DAHA{N}{N}$ we have done in Proposition \ref{prop:HKgen}.  We have avoided doing so to ensure that the present paper, with the exception of the present section, which merely gives an alternative and more constructive proof of Theorem \ref{mainthm:Springer} -- is independent of \cite{GJVY}. 
\end{remark}

\begin{proposition}\label{prop:HKgen} %upgrade to theorem?
For any strongly equivariant $\Dq$-module $M$ in the subcategory generated by $\HKuniv$ we have  isomorphisms,
\[
\End_{\Dqstr{G}}(M) \cong \End_{\Dq}(M) \cong \End_{\eN \DAHA{N}{N} \eN}( \eN F_N(M))
 \cong \End_{\DAHA{N}{N}}(F_N(M)).
\]

\end{proposition}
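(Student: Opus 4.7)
The plan is to verify the chain of three isomorphisms in sequence, each following from a result already established in the paper; there is essentially no new calculation to perform.

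For the first isomorphism $\End_{\Dqstr{G}}(M) \cong \End_{\Dq}(M)$, I would invoke Proposition \ref{prop:full subcategory}, which (over the field $\K$) identifies $\Dqstr{G}$ with the full subcategory of $\Dq\modu$ consisting of modules whose restriction along $\ad$ is locally finite. Since it is a full subcategory, $\Hom$ spaces agree between the two categories, and in particular so do endomorphism algebras.

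For the second isomorphism $\End_{\Dq}(M) \cong \End_{\eN \DAHA{N}{N} \eN}(\eN F_N(M))$, I would apply Corollary \ref{cor:Dq to HN} (which itself rests on Theorem \ref{mainthm:HKuniv}). That corollary states that $\eN F_N \colon \Dqstr{G} \to \eN \DN \eN\modu$ admits a fully faithful left adjoint $(\eN F_N)^L$ whose essential image is precisely the subcategory generated by $\HKuniv$. By hypothesis, $M$ lies in this subcategory, so by Remark \ref{rem:adjoint} the counit $(\eN F_N)^L(\eN F_N(M)) \to M$ is an isomorphism; equivalently, $\eN F_N$ is fully faithful on this subcategory, which gives the isomorphism on endomorphism algebras. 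This step is where the hypothesis on $M$ is genuinely used.

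For the third isomorphism $\End_{\eN \DN \eN}(\eN F_N(M)) \cong \End_{\DN}(F_N(M))$, I would invoke Proposition \ref{prop:morita}. Since $\eN$ is a full idempotent in the Morita sense, the functor $N \mapsto \eN \cdot N$ is an equivalence of categories $\DN\modu \xrightarrow{\sim} \eN \DN \eN\modu$, and any equivalence preserves endomorphism algebras. Applied to $N = F_N(M)$, whose image is $\eN F_N(M)$, this yields the final required isomorphism.

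No step presents a real obstacle; the proposition is a bookkeeping assembly of the main structural results. The only point worth underscoring is that the hypothesis ``$M$ lies in the subcategory generated by $\HKuniv$'' is indispensable in the middle step — without it, the counit $(\eN F_N)^L(\eN F_N(M)) \to M$ need not be an isomorphism, and the comparison of endomorphism algebras could fail.
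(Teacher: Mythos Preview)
Your proposal is correct and matches the paper's proof essentially step for step: the first isomorphism via Proposition~\ref{prop:full subcategory}, the second via the fully faithful functor $\ee F_N$ on the subcategory generated by $\HKuniv$ (the paper phrases this directly via projectivity of $\HKuniv$ rather than citing Corollary~\ref{cor:Dq to HN}, but the content is the same), and the third via the Morita equivalence of Proposition~\ref{prop:morita}.
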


\begin{proof}
The first isomorphism is due to Proposition \ref{prop:full subcategory}.

For the second isomorphism, recall that the universal Hotta-Kashiwara module $\HKuniv$ is a projective object in $\Dqstr{G}$ by Proposition \ref{prop: Hom HK gives invariants}, hence we have an equivalence of categories between the subcategory generated by $\HKuniv$ and $\End(\HKuniv)^{op}\modu,$ given by applying the functor $\ee\cdot F_N \cong \Hom(\HKuniv,-)$.   The last isomorphism follows from the Morita equivalence asserted in Proposition \ref{prop:morita}.

\end{proof}

In the case $M=\HKchi$, we can describe the rightmost endomorphism algebra of Proposition  \ref{prop:HKgen}, by first realising $F_N(\HKchi)$ as an induced module, using the identification \eqref{eq:SymB}.

We can strengthen  Corollary \ref{cor:eqrho} using Proposition \ref{prop:morita}.
\begin{proposition}\label{prop:End FN HK}
    There is an isomorphism of $\DN$-modules
    \[
    F_N(\HKchi) \cong 
    \Ind^{\DN}_{\SH}(\chi \bt \sgn)
    \]
\end{proposition}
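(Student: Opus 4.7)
The plan is to construct an explicit $\DN$-module map $\Ind^{\DN}_{\SH}(\chi \bt \sgn) \to F_N(\HKchi)$ from the Schur--Weyl map $\SWeN$ of \eqref{eq:SWe}, then use Morita equivalence to reduce the check of bijectivity to the already-known isomorphism $\eSWeN$ of Theorem \ref{mainthm:HKuniv}.

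First I would observe that $F_N$ is exact: $\wedgeN{N}$ is projective in $\Uq\modu$ so $\Hom_{\Uq}((\wedgeN{N})^{\otimes 1}, V^{\otimes N} \otimes -)$ is exact. Since $\HKchi = \HKuniv \otimes_{\cB^G} \chi$ is a cokernel (over the $\cB^G$-action on the right), exactness gives a natural isomorphism of left $\DN$-modules $F_N(\HKchi) \cong F_N(\HKuniv) \otimes_{\SymY} \chi$, where we have used the identification $\cB^G \cong \SymY$ from \eqref{eq:SymB}. Similarly, since $\SH = \SymY \otimes_{\K} \Hf$ with the two factors commuting and $\eN \in \Hf$, one has $\SH \eN = \SymY \otimes_{\K} \sgn$, so
\[
\SH \eN \otimes_{\SymY} \chi \cong \chi \bt \sgn
\]
as $\SH$-modules, and consequently a canonical isomorphism
\[
\Ind^{\DN}_{\SH}(\chi \bt \sgn) \cong \DN \eN \otimes_{\SymY} \chi.
\]

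Next I would use Proposition \ref{prop:Y and c} to note that $\SWeN : \DN \eN \to F_N(\HKuniv)$ is a $\DN$-$\SymY$-bimodule map, and hence tensoring with $\chi$ over $\SymY$ yields a $\DN$-module map
\[
\SWeN \otimes_{\SymY} \id_{\chi} \colon \DN \eN \otimes_{\SymY} \chi \;\longrightarrow\; F_N(\HKuniv) \otimes_{\SymY} \chi \;\cong\; F_N(\HKchi).
\]
Combined with the identification above, this is a candidate $\DN$-module map $\Psi_\chi : \Ind^{\DN}_{\SH}(\chi \bt \sgn) \to F_N(\HKchi)$, and sends the generator $1 \otimes (\eN \otimes 1_\chi)$ to $z \otimes 1 \in F_N(\HKchi)$.

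Finally, to show $\Psi_\chi$ is an isomorphism I would invoke the Morita equivalence of Proposition \ref{prop:morita}: it suffices to verify that $\eN \cdot \Psi_\chi$ is an isomorphism of $\eN\DN\eN$-modules. Under $\eN\cdot$, both tensor product constructions commute with the functor $\eN\cdot$ (since $\eN$ is central with respect to the $\SymY$-action), and $\eN \cdot \Psi_\chi$ identifies canonically with
\[
\eSWeN \otimes_{\SymY} \id_\chi \colon \eN \DN \eN \otimes_{\SymY} \chi \;\longrightarrow\; \eN F_N(\HKuniv) \otimes_{\SymY} \chi.
\]
By Theorem \ref{mainthm:HKuniv}, $\eSWeN$ is an isomorphism, hence so is its tensor product with the identity on $\chi$. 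Applying the inverse Morita functor then shows $\Psi_\chi$ is an isomorphism, as required.

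The main potential obstacle is the exactness / base-change step ensuring $F_N(\HKchi) \cong F_N(\HKuniv) \otimes_{\SymY} \chi$ as $\DN$-$\SymY$ bimodules (and compatibly with $\SWeN$); everything else is bookkeeping with the idempotent $\eN$ and Morita equivalence. Once this identification is established, the Morita reduction to Theorem \ref{mainthm:HKuniv} is immediate.
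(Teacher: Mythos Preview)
Your proposal is correct and follows essentially the same route as the paper's proof: construct the map $\SWeN \otimes \chi$ (the paper refers back to the proof of Theorem~\ref{thm:SWinj} for this), then use the Morita equivalence of Proposition~\ref{prop:morita} to reduce to checking that $\eN\cdot(\SWeN\otimes\chi)=\eSWeN\otimes\chi$ is an isomorphism, which is immediate from Theorem~\ref{mainthm:HKuniv}. Your exactness argument for $F_N$ is a harmless elaboration; the paper simply uses the natural map $F_N(\HKuniv)\otimes_{\SymY}\chi \to F_N(\HKchi)$ and lets the Morita step certify bijectivity.
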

\begin{proof}
Recall from the proof of Theorem \ref{thm:SWinj} that there is a natural map
\[
\SWeN\ot_{\iBt}\chi:\Ind^{\DN}_{\SH}(\chi \bt \sgn) \to F_N(\HKchi).
\]
By Proposition \ref{prop:morita}, to check that $\SWeN\ot_{\iBt}\chi$ is an isomorphism, it suffices to check that it is an isomorphism after multiplying on the left by $\eN$. But this follows from the Theorem \ref{thm:SWsurjective}, tensoring the isomorphism  
\[
\eSWeN: \eN \DN \eN \to \HKuniv^G 
\]
with the character $\chi$. 
\end{proof}

Proposition \ref{prop:Y and c}, together with Proposition \ref{prop:End FN HK}, imply the following corollary, which is a strengthening of the first part of Corollary \ref{cor:eqrho}.

\begin{corollary}\label{cor:qrho}
We have isomorphisms,
\[
F_N(\HKe)\cong \Ind_\SH^\DAHA{N}{N}(\epsilon \boxtimes \sgn) \cong \Ind_{\Y}^{\DAHA{N}{N}}(\qrho).
\]

\end{corollary}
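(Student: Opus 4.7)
The plan is to combine three results already established earlier in the excerpt: Proposition \ref{prop:End FN HK}, Corollary \ref{cor:epsilon as qrho}, and Theorem \ref{thm:Ind SH}. The first isomorphism is immediate from Proposition \ref{prop:End FN HK} specialized at $\chi = \epsilon$, which directly yields
\[
F_N(\HKe) \cong \Ind^{\DN}_{\SH}(\epsilon \bt \sgn).
\]

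For the second isomorphism, I would first invoke Corollary \ref{cor:epsilon as qrho}: under the identification $\cB^G \cong \SymY$ from Proposition \ref{prop:Y and c}, the restriction of $\epsilon$ to $\SymY$ coincides with the character $\qrho$. Consequently the $\SH$-modules $\epsilon \bt \sgn$ and $\{\qrho\} \bt \sgn$ are identified, and the problem reduces to producing an isomorphism
\[
\Ind^{\DN}_{\SH}(\{\qrho\} \bt \sgn) \cong \Ind_\Y^{\DN}(\qrho)
\]
of $\DN$-modules.

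Next I would verify that $\qrho$, viewed as the tuple $(1, \q^{-2}, \ldots, \q^{2-2N}) = (t^0, t^{-2}, \ldots, t^{2-2N})$, satisfies the descending condition of Definition \ref{def: reverse right order}: for $i < j$ one has $a_i/a_j = t^{2(j-i)}$, and the exponent $j-i$ is nonnegative. Theorem \ref{thm:Ind SH} then provides the desired isomorphism at the level of $\HY$-modules, namely
\[
\Ind^{\HY}_{\SH}(\{\qrho\} \bt \sgn) \cong \Ind_\Y^{\HY}(\qrho).
\]
Since $\SH \subseteq \HY \subseteq \DN$ and $\Y \subseteq \HY \subseteq \DN$, transitivity of induction yields the full claim by applying $\Ind_{\HY}^{\DN}$ to both sides.

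Realistically, there is no substantive obstacle: the corollary is essentially a composition of previously proved facts. The only routine checks are verifying that $\qrho$ is descending and that induction in stages is available to transfer Theorem \ref{thm:Ind SH} from $\HY$-modules up to $\DN$-modules, both of which are immediate from the definitions.
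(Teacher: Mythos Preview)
Your proof is correct and follows essentially the same approach as the paper's own argument: the first isomorphism is Proposition \ref{prop:End FN HK} at $\chi=\epsilon$, and the second combines Corollary \ref{cor:epsilon as qrho} with Theorem \ref{thm:Ind SH} after noting that $\qrho=\trho$ is descending. Your explicit verification of the descending condition and the remark on transitivity of induction are helpful elaborations, but the underlying argument is identical.
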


\begin{proof} 
The first isomorphism is the case $\chi=\epsilon$ of Proposition \ref{prop:End FN HK}.  
Recall from Notation \ref{not:parameters-all-together} we have $t=\q$ and $q=t^{-2}$.   
The latter isomorphism is Corollary \ref{cor:epsilon as qrho} combined with Theorem \ref{thm:Ind SH}, as $\trho$ is descending. %% reverse right ordered.
\end{proof}

Using Proposition \ref{prop:morita}, we can strengthen 
Proposition \ref{prop:Springer-decomp} to give a decomposition of $\Ind_{\Y}^{\DN} \qrho$ into irreducibles.
\begin{corollary}\label{cor: Ind decomp}
As a $\DN_N$-$\K[\SN]$ bimodule, we have a decomposition
    $$F_N(\HKe) \cong  \Ind_{\Y}^{\DN} \qrho \cong \bigoplus_{\lambda} L_\lambda^q \otimes S^\lambda
    $$
    where the direct sum is indexed by partitions $\lambda$ of $N$ and $S^\lambda$ is the corresponding irreducible for $\SN$.
    Furthermore $L_\lambda^q \cong \Ind_{\HY}   S^\lambda_t$ is a simple $\DN$-module. 
\end{corollary}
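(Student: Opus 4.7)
The first isomorphism simply restates Corollary \ref{cor:qrho}, so only the bimodule decomposition requires argument. My plan is to transfer the antispherical decomposition already established in Proposition \ref{prop:Springer-decomp} over to the full DAHA $\DN$ via the Morita equivalence of Proposition \ref{prop:morita}, and then identify the resulting simple summands concretely as induced modules.

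Concretely, Proposition \ref{prop:Springer-decomp} together with Corollary \ref{cor:qrho} gives
\[
\ee F_N(\HKe) \;\cong\; \Ind_{\SymY}^{\ee\DN\ee}\{\qrho\} \;\cong\; \bigoplus_\lambda \bar L_\lambda \otimes S^\lambda
\]
as $\ee\DN\ee$-$\K[\SN]$ bimodules, with each $\bar L_\lambda$ a simple $\ee\DN\ee$-module. By Proposition \ref{prop:morita}, the functor $\ee\cdot$ is a Morita equivalence with quasi-inverse $\DN\ee\otimes_{\ee\DN\ee}-$. Applying this quasi-inverse to both sides recovers $F_N(\HKe)$ on the left and yields on the right
\[
F_N(\HKe) \;\cong\; \bigoplus_\lambda L_\lambda^q \otimes S^\lambda, \qquad L_\lambda^q := \DN\ee\otimes_{\ee\DN\ee}\bar L_\lambda.
\]
Since Morita equivalence preserves simples, each $L_\lambda^q$ is automatically a simple $\DN$-module.

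To identify $L_\lambda^q$ with $\Ind_{\HY} S^\lambda_t$, I would induce in stages along $\K[\Y] \subset \HY \subset \DN$. Theorem \ref{thm:Ind SH} applies since $\qrho$ is descending (as noted in Corollary \ref{cor:qrho}), giving $\Ind_\Y^{\HY}\qrho \cong \Ind_\SH^{\HY}\{\qrho\}\boxtimes\sgn$; viewed as a left $\Hf_N(t)$-module, this is the regular representation, with $\SymY$ acting through the character $\{\qrho\}$. For generic $t$ the finite Hecke algebra $\Hf_N(t)$ is semisimple, so Wedderburn decomposes the regular representation as $\bigoplus_\lambda S^\lambda_t \otimes S^\lambda$ as $\Hf_N(t)$-$\K[\SN]$ bimodule. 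Applying $\Ind_{\HY}^{\DN}$ then yields $\bigoplus_\lambda (\Ind_{\HY}^{\DN} S^\lambda_t)\otimes S^\lambda$, identifying $L_\lambda^q \cong \Ind_{\HY}^{\DN} S^\lambda_t$.

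The main obstacle will be the bookkeeping step of verifying that the $\K[\SN]$-action on $F_N(\HKe)$ transferred via the shift isomorphism and Morita equivalence of Section \ref{subsec:End HK via shift} matches the Wedderburn $\K[\SN]$-action arising from the staged-induction description. Once this identification of actions is in place, both the simplicity and the explicit form of the summands follow formally by combining Propositions \ref{prop:Springer-decomp}, \ref{prop:morita}, Theorem \ref{thm:Ind SH}, and Corollary \ref{cor:qrho}.
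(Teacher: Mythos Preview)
Your overall strategy matches the paper's: transport the antispherical decomposition of Proposition~\ref{prop:Springer-decomp} through the Morita equivalence of Proposition~\ref{prop:morita}, so that both the bimodule decomposition and the simplicity of the summands follow formally. Indeed the paper's own proof is nothing more than the one-line observation $\ee L_\lambda^q=\bar L_\lambda$ together with Morita.

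There is, however, a genuine slip in your staged-induction identification of the summands. Wedderburn for $\Hf_N(t)$ gives the decomposition of $\Ind_\Y^{\HY}\qrho$ only as an $\Hf_N(t)$-module; when you then ``apply $\Ind_{\HY}^{\DN}$'' term by term you are tacitly assuming that the $\Hf$-isotypic pieces are $\HY$-submodules isomorphic to the evaluation-inflated $S^\lambda_t$. They are not: every $\HY$-constituent of $\Ind_\Y^{\HY}\qrho$ has $\SymY$-character $\{\qrho\}$, whereas the inflated $S^\lambda_t$ generically do not. For instance, for $\lambda=(N)$ inflation along $Y_1\mapsto 1$ forces $Y_i=t^{2i-2}$, giving $\SymY$-character $\{1,t^2,\dots,t^{2N-2}\}$, while the actual trivial-type summand of $\Ind_\Y^{\HY}\qrho$ has $Y_i=t^{2i-2N}$ (compare the module $\mathbf{T}$ in Example~\ref{ex:non ssl}, where $Y_1=t^{-2}$, not $1$). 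The conclusion $L_\lambda^q\cong\Ind_{\HY}^{\DN}S^\lambda_t$ is nevertheless correct, because the true $\HY$-summand and the inflated $S^\lambda_t$ differ only by a twist by a power of $\pi^N$ (which normalises $\HY$ and rescales each $Y_i$ by a power of $q$), and hence induce to isomorphic $\DN$-modules; but this step needs to be made explicit, and the paper's own proof is equally terse on the point.
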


\begin{proof}
    The finite Hecke algebra $\Hf$ has irreducible modules labeled by partitions, and we
    can inflate these to the affine Hecke algebra $\HY$ along the evaluation homomorphism that sends $T_i \mapsto T_i$, $Y_1 \mapsto 1$.  We call the resulting irreducible AHA-module $S_t^\lambda$.

    We observe $\ee L_\lambda^q = \bar L_\lambda$ from Proposition \ref{prop:Springer-decomp}.  Given $\ee$ yields a Morita equivalence, the result follows.
\end{proof}

As mentioned in Footnote \ref{foot:irred}, from this corollary,  the functor $\Upsilon$ and Proposition \ref{prop:morita}, we may conclude the $M_\lambda$ of Theorem \ref{mainthm:decomp} are irreducible, not just indecomposable.

\begin{remark} \label{rem:JW}
Given a $\Dq$-module $M$, recall that while the action of $\DAHA{N}{n}$ is only well defined on $(V^{\ot n}\ot M)^G$, the action of $\HY$ is well defined on all of $V^{\ot n}\ot M$.  Let us record the following peculiar corollary of \cite[Theorem 1.3]{JW} (which we will not use in the remainder of the paper):  for any $n$, the operator $Y=Y_1$ acting on $V^{\ot n}\otimes 1 \subset V^{\ot n} \otimes \HKe$ satisfies the characteristic equation,
\[
(Y-1)(Y-\q^{-2})\cdots(Y-\q^{(2-2N)}) = 0
\]
We note that when $\q=1$ this becomes the condition that $Y$ lies in the unipotent cone, however for quantum parameter $\q$ generic it implies $Y$ acts diagonalizably. 
\end{remark}

Now we proceed to build the DAHA machinery to show $\Phi$ of \eqref{eq:End homom} is a well-defined homomorphism.   Although redundant to give this second proof of Proposition \ref{prop:End smash}, it is more direct than working with the anti-spherical and spherical DAHA, and we build some worthwhile DAHA tools along the way.

\subsection{Poles and zeros of intertwiners}  Our strategy is to define the required endomorphisms $\Phi_w$ in terms of renormalized intertwiners $\nuj{i}\in\HHloc$ from Definition \ref{def:intertwiners}.  The expressions $\fij{i}{i+1}^{\pm 1}$ can introduce poles and zeroes into the definition, and hence much of the technical complication in making sense of the $\Phi_w$ is to keep track of these.

To this end, let us begin by collecting some simple observations concerning zeros of the $\fij{i}{j}$ when evaluated at the character $\qrho = \trho$ appearing in Corollary \ref{cor:qrho}. We have:
\[(Y_i-Y_j)|_{\trho} = 0 \quad \textrm{if, and only if, $j-i\equiv 0 \mod (n+1)$}.
\]
With the $q=t^{-2}$ specialization we observe:
\[\fij{i}{j} = tY_i - t^{-1} Y_{j} =  t^{-1}(Y_{i+n}-Y_j) = t(Y_i-Y_{j-n}),\]
hence we conclude:
\[\fij{i}{j}|_{q^\rho} =0 \quad \textrm{ if, and only if $j-i \equiv n \mod (n+1)$}.\]

Let us first rewrite
\[
\nui = T_i \frac{t \fij{i-n}{i+1}}{\fij{i}{i+1}} + \frac{(t-t^{-1})Y_{i+1}}{\fij{i}{i+1}}.
\]
As a consequence of this formula and Equation \eqref{eqn:phivsnu}, if we consider the normal ordering $\normal{\nu_w}$ of some operator $\nu_w$ as described in Section \ref{sec:normal} (see Definition \ref{def:normal}) below, we see that the poles and zeros of $\normal{\nu_w}$ are controlled by the set of inversions of $w$.  This is captured in the following Definition, as will become clearer in the proof of Theorem \ref{thm:nopoles}.

\begin{definition}
Fix $w\in\AffSym$.  We say that an inversion $(i,j)\in\Inv(w)$ is \defterm{vanishing} if $(j-i) \equiv 0 \mod (n+1)$, and \defterm{singular} if $(j-i)\equiv n \mod (n+1)$.  We denote by $\Inv_0(w)$ and $\Inv_\infty(w)$, respectively, the sets of vanishing and singular   inversions.
\end{definition}

Recall that $\Sn\subset\AffSym$ is the stabilizer of $(0,\ldots,0)$ with respect to the action on $\Z^n$.  Let us denote by $\trrho\in\AffSym$ the translation by the element $-\rho =(0,-1,\ldots,1-n)$, so that $\trrho \cdot\rho = (0,\ldots,0)$. 
Recall the symbol $\cdot$ denotes our usual (not dot) action from Section \ref{sec:intertwiners}.
Clearly, the stabilizer of $\rho$ with respect to the above action of $\AffSym$ on $\Z^n$ is
\[
    \operatorname{Stab}_{\AffSym}(\rho) 
    = \trrho^{-1} \Sym_n \trrho.
\]
We work with $-\rho$ in this section since $\qrho = t^{-2 \rho}$ but conventionally $\AffSym$ acts  on $\ZZ^n$, not $(2\ZZ)^n$. 
The stabilizer is therefore generated by the elements
\begin{align}
    \trrho^{-1} s_i \trrho & = [1,\,2,\,\cdots, \,\, i\!+1\!+n, \,\, i\!-\!n ,\,\,\cdots, \, n] \notag \\
    &= \underbrace{s_is_{i+1}\cdots s_{n-1}s_{i-1}\cdots s_2s_1}_{\deltai^{-1}} s_0 \underbrace{s_1s_2\cdots s_{i-1}s_{n-1}\cdots s_{i+1}s_i}_{\deltai}, \label{eq:ellen one}
\end{align}
expressed first in ``window notation" (see \cite{bjorner-brenti}), and then as a reduced expression.  In particular we record the equation $\ellen(\trrho^{-1}s_i\trrho) = 2n-1$.
Note that $\delta_i = [2,\,3,\, \cdots,\, i,\, n,\, 1, \, \, i\!+\!1,\, \cdots,\, n\!-\!1]$  is an $n$-cycle.

The following lemma is straightforward; we omit the proof, but instead give an example in Figure \ref{ex:bijection}

\begin{lemma}\label{lem:inversions}
Let $w\in\Sym_n\subset\AffSym$.  We have bijections,
\[\begin{array}{rl}\alpha:& \Inv(w) \to \Inv_{0}(\trrho^{-1}w\trrho)\\&(i,j) \mapsto (i,j+(j-i)n)\end{array},\qquad    \begin{array}{rl}\beta:&\Inv(w) \to \Inv_{\infty}(\trrho^{-1}w\trrho),\\
&(i,j) \mapsto (i,j + (j-i+1)n)\end{array}.
\]
\end{lemma}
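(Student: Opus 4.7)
My plan is to reduce everything to an explicit formula for $\sigma := \trrho^{-1} w \trrho$ as an affine permutation of $\Z$, and then read off both bijections by direct computation. Combining the prescription $\trrho(k) = k + n(-\rho)_{k \bmod n}$ with the fact that $w \in \Sn$ commutes with shifts by $n$, one unwinds the composition to find
$$\sigma(i) = w(i) + n(w(i) - i), \qquad i \in \{1, \ldots, n\},$$
extended quasi-periodically via $\sigma(k + n) = \sigma(k) + n$.

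The forward inclusions are then immediate. Given $(i, j) \in \Inv(w)$ with $1 \leq i < j \leq n$, set $j' = j + (j-i)n$. Using the $\sigma$-formula one computes
$$\sigma(i) - \sigma(j') = (n+1)(w(i) - w(j)) > 0, \qquad j' - i = (j-i)(n+1),$$
which places $(i, j') \in \Inv_0(\sigma)$. The analogous calculation for $\beta$ gives
$$\sigma(i) - \sigma(j + (j-i+1)n) = (n+1)(w(i) - w(j)) - n \geq 1$$
together with $j + (j-i+1)n - i \equiv n \pmod{n+1}$, placing it in $\Inv_\infty(\sigma)$. Injectivity of both maps is visible from the defining formulas.

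I expect the surjectivity step to be the main obstacle --- not conceptually, but in bookkeeping. Suppose $(i, j') \in \Inv_0(\sigma)$, and write $j' = i_0 + mn$ with $i_0 \in \{1, \ldots, n\}$ and $m \in \Z_{\geq 0}$ (the non-negativity of $m$ being forced by $j' > i \geq 1$). The $\sigma$-formula then gives
$$\sigma(i) - \sigma(j') = (n+1)(w(i) - w(i_0)) + n(i_0 - i - m),$$
and the residue condition $j' - i \equiv 0 \pmod{n+1}$ translates to $i_0 - i - m = -k(n+1)$ for some $k \in \Z$, whence the difference simplifies to $(n+1)(w(i) - w(i_0) - nk)$. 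The constraint $m \geq 0$ combined with $|i_0 - i| \leq n-1$ rules out $k \leq -1$, while the bound $w(i) - w(i_0) \leq n-1$ combined with positivity of $\sigma(i) - \sigma(j')$ rules out $k \geq 1$. This pins $k = 0$, i.e.\ $m = i_0 - i$ and $w(i) > w(i_0)$, so $(i, j') = \alpha(i, i_0)$ with $(i, i_0) \in \Inv(w)$. The argument for $\beta$ is completely parallel: the residue condition $j' - i \equiv n \pmod{n+1}$ yields $m = i_0 - i + 1 + k(n+1)$, and the same two bounds pin $k = 0$, exhibiting $(i, j')$ as the image under $\beta$ of a unique inversion of $w$.
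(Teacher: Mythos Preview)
Your proof is correct and complete in all essentials. The paper itself omits the proof entirely, declaring the lemma ``straightforward'' and offering only the worked example (Example~\ref{ex:bijection}) in lieu of an argument, so your direct computation via the explicit formula $\sigma(i) = w(i) + n(w(i)-i)$ is exactly the intended route.

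One small point worth making explicit in the $\beta$-surjectivity step: after pinning $k=0$ and obtaining $m = i_0 - i + 1$, the constraint $m \geq 0$ together with $i_0 \neq i$ still leaves open the possibility $i_0 = i - 1$. This case is ruled out because it forces $m=0$ and hence $j' = i_0 = i-1 < i$, contradicting $i < j'$; only then can you conclude $i < i_0$ and hence $(i,i_0) \in \Inv(w)$. In the $\alpha$ case this issue does not arise since $m = i_0 - i \geq 0$ and $i_0 \neq i$ immediately give $i_0 > i$. With that one line added, the argument is airtight.
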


 \subsection{Normal orderings} \label{sec:normal}
 Recall that $\HH_n(N)$ has a vector space decomposition $\HH_n(N) = \HX \otimes \K[\Y]$, which extends to a decomposition $\HHloc = \HX\otimes \K[\Yloc]$.  Let us fix the standard basis of $\HX$ consisting of elements $T_w$, for $w\in \AffSym$. 
\begin{definition} \label{def:normal}
Given $h\in\HHloc$, we will write $\normal{h}\in\HX\otimes \K[\Yloc]$ for its \defterm{normal ordering},
    \[
    \normal{h} = \sum_{w\in\AffSym} T_wg_w,
    \]
    where $g_w\in \K[\Yloc]$ and all but finitely many $g_w$ are nonzero.  In particular, we have $h\in\HH$ if, and only if $g_w\in\K[\Y]\subset \K[\Yloc]$ for all $w\in\AffSym$.
\end{definition}
    
Given a normally ordered element $\normal{h}= \sum_x T_xg_x$ of $\HHloc$, we will say it has a leading term if there exists $w \in \AffSym$ such that $g_w \neq 0$, and for $x \neq w$,  $g_x\neq 0$ implies $\ellen(x)<\ellen(w)$.  In this case we call $T_w$ the \defterm{leading term} and $g_w\in\K[\Yloc]$ the \defterm{leading coefficient}.  We stress that, because length does not define a total ordering, not every normally ordered expression has a leading term.

We now turn to the key technical result of this section.

\begin{theorem}\label{thm:nopoles}
Let $w\in\Sym_n$, and consider the normal ordering of $\nuj{\trrho^{-1}w\trrho}$,
\[
\normal{\nuj{\trrho^{-1}w\trrho}} = \sum_{x\in \AffSym}T_xg_x.
\]
Then $\normal{\nuj{\trrho^{-1}w\trrho}}$ has leading term $T_{\trrho^{-1}w\trrho}$, and the leading coefficient $g_{\trrho^{-1}w\trrho}$ has neither a zero nor a pole at $\trho = \qrho$.  Moreover, no lower order coefficient $g_x$ has a pole at $\trho$.
\end{theorem}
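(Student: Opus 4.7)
The plan is to reduce the computation of the normal form of $\nuj{\trrho^{-1}w\trrho}$ to that of $\phij{\trrho^{-1}w\trrho}$ via identity \eqref{eqn:phivsnu}, and then to analyze zero/pole behavior at $\qrho$ using the combinatorics of Lemma \ref{lem:inversions}. Set $v = \trrho^{-1}w\trrho$ and $g := q^{\alpha}\prod_{(i,j)\in\Inv(v)}\fij{i}{j} \in \K[\Y]$. Equation \eqref{eqn:phivsnu} gives $\phij{v} = \nuj{v} \cdot g$, so if $\phij{v} = \sum_x T_x h_x$ is the normal form with $h_x \in \K[\Y]$, then $\nuj{v} = \sum_x T_x(h_x/g)$; in particular each coefficient is the rational function $g_x = h_x/g$. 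A straightforward induction on $\ellen(v)$, using the formula $\phii = T_i(Y_i - Y_{i+1}) + (t-t^{-1})Y_{i+1}$ together with the intertwining relations of Section \ref{sec:intertwiners}, shows that $\phij{v}$ has leading (in length) term $T_v \cdot \prod_{(i,j)\in\Inv(v)}(Y_i - Y_j)$, whence the leading coefficient of $\nuj{v}$ is $g_v = q^{-\alpha}\prod_{(i,j)\in\Inv(v)}(Y_i - Y_j)/\fij{i}{j}$.

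To evaluate $g_v$ at $\qrho$, I would invoke Lemma \ref{lem:inversions} to pair each $(i,j) \in \Inv(w)$ with a vanishing inversion $(i,j') \in \Inv_0(v)$ and a singular inversion $(i,j'+n) \in \Inv_\infty(v)$, where $j' = j + (j-i)n$. Using the identity $\fij{a}{b+n} = t(Y_a - Y_b)$, which follows from $Y_{b+n} = q^{-1}Y_b$ and the specialization $q = t^{-2}$, each paired contribution simplifies to
\[
\frac{(Y_i - Y_{j'})(Y_i - Y_{j'+n})}{\fij{i}{j'}\,\fij{i}{j'+n}} \;=\; \frac{Y_i - Y_{j'+n}}{t^2(Y_i - Y_{j'-n})},
\]
which a direct calculation shows evaluates at $\qrho$ to $-1$. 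Inversions outside $\Inv_0(v) \cup \Inv_\infty(v)$ individually contribute finite nonzero factors, since both numerator and denominator are nonzero at $\qrho$. Hence $g_v(\qrho)$ is a finite nonzero scalar, as claimed.

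The principal obstacle is to show that no lower order coefficient $g_x = h_x/g$ acquires a pole at $\qrho$; equivalently, that each $h_x$ vanishes along the hyperplane $\{\fij{a}{b} = 0\}$ for every singular inversion $(a,b) \in \Inv_\infty(v)$. I would proceed by induction on $\ellen(w)$. The base case $w = e$ is trivial. For the inductive step, write $w = w's_j$ with $\ellen(w') < \ellen(w)$ and factor $\nuj{v} = \nuj{v'} \cdot \nuj{v_j}$, where $v' = \trrho^{-1}w'\trrho$ and $v_j = \trrho^{-1}s_j\trrho = \delta_j^{-1} s_0 \delta_j$ by \eqref{eq:ellen one}. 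Because $\delta_j \in \Sym_n$, the outer factors $\nuj{\delta_j^{\pm 1}}$ are regular at $\qrho$ (every $\fij{a}{b}$ with $1 \le a < b \le n$ is nonzero there), while a direct check shows $\fij{0}{1}$ is nonzero at the intermediate weight $\delta_j \cdot \qrho$, so the simple factor $\nuj{v_j}$ is itself regular at $\qrho$. The induction is then completed by tracking how the intermediate weights, all of which lie in the $\AffSym$-orbit of $\qrho$, interact with the potential poles of $\nuj{v'}$.

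A cleaner alternative, which could be pursued in parallel, is to recognise $\nuj{v}$ as defining a well-defined endomorphism of the $\qrho$-weight space of $\Ind_\Y^{\HH}\qrho$ via deformation: at a generic nearby weight $\qrho_\epsilon$ the affine stabilizer collapses and $\nuj{v}$ acts as a genuine operator on the corresponding (generically $1$-dimensional) weight space, so the theorem reduces to showing that this family admits a finite limit as $\epsilon \to 0$. The existence of such a limit can be read off from the intertwining relations together with the natural action of $\Stab_{\AffSym}(\qrho) = \trrho^{-1}\Sym_n\trrho$ on the $n!$-dimensional $\qrho$-weight space, making the regularity of $\nuj{v}$ automatic for $v$ in this stabilizer.
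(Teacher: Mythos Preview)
Your treatment of the leading term via \eqref{eqn:phivsnu} and Lemma~\ref{lem:inversions}, and your inductive reduction $\nuj{v}\,\vv = \nuj{v'}\bigl(\nuj{v_j}\vv\bigr)$ with $v_j=\trrho^{-1}s_j\trrho$, are both correct and match the paper's argument exactly. The difficulty is entirely in the base case $\ellen(w)=1$, and here your argument has a genuine gap.

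You assert that the outer factors $\nuj{\delta_j^{\pm 1}}$ are regular ``at $\qrho$'', but that is the wrong hypothesis for the leftmost factor. Writing $\nuj{v_j}=\nuj{\delta_j^{-1}}\,\nuj{0}\,\nuj{\delta_j}$ and applying to $\vv$, the last operator $\nuj{\delta_j^{-1}}$ acts on a vector of weight $s_0\delta_j\cdot\qrho=\delta_j\cdot\qrho$, not $\qrho$. At that intermediate weight the factor is \emph{not} regular: the inversion $(1,n)\in\Inv(\delta_j^{-1})$ satisfies $\delta_j^{-1}(1)=j+1$ and $\delta_j^{-1}(n)=j$, hence
\[
\fij{1}{n}(\delta_j\cdot\qrho)\;=\;t\,(\qrho)_{j+1}-t^{-1}(\qrho)_{j}\;=\;t^{1-2j}-t^{1-2j}\;=\;0.
\]
Already for $n=2$ this is visible: $v_1=s_1s_0s_1$, and the final $\nuj{1}$ must act at weight $s_1\cdot\qrho=(q,1)$, where $\fij{1}{2}=tq-t^{-1}=0$. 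So the product of three individually ``regular'' factors does not exist; the pole of the outer factor must cancel against contributions from the inner ones, and establishing this cancellation is precisely the content of the base case. The paper carries this out by an explicit normal-ordering computation of $\nuj{\trrho^{-1}s_i\trrho}$, tracking how the cross terms produced when $\aij{1}{k},\bij{1}{k},\aij{k}{n},\bij{k}{n}$ are pushed past $T_0$ combine to kill the apparent pole. Your proposal does not supply any mechanism for this cancellation.

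Your ``cleaner alternative'' via deformation to a nearby weight $\qrho_\epsilon$ is circular as stated: the existence of a finite limit of the family of operators as $\epsilon\to 0$ is exactly the pole-freeness you are trying to prove, and nothing in the intertwining relations or the stabilizer description forces such a limit to exist without an argument equivalent to the explicit cancellation above.
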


\begin{proof}
To simplify notation, we will write 
\begin{gather}\label{eqn:define aij bij for nu}
\aij{i}{j}=\frac{t\fij{i-n}{j}}{\fij{i}{j}}, \qquad \bij{i}{j} = \frac{(t-t^{-1})Y_{j}}{\fij{i}{j}}, \quad \textrm{ hence } \nui = T_i\aij{i}{i+1} + \bij{i}{i+1}.
\end{gather}
Note that $\aij{i}{j} = \aij{i+n}{j+n}$ and $\bij{i}{j} = \bij{i+n}{j+n}$.

Pick a reduced  expression $\trrho^{-1}w\trrho=s_{i_1}\cdots s_{i_p}$, thereby inducing an ordering
\[
\Inv(\trrho^{-1}w\trrho) = \{(i_1,j_1)\ldots (i_p,j_p)\}
\]
on the set of inversions of $\trrho^{-1}w\trrho$.  
We have:

\[
\nuj{\trrho^{-1}w\trrho} = \sum_{\underline{\epsilon}\in \{0,1\}^p} T_{i_1}^{\epsilon_1}\ldots T_{i_p}^{\epsilon_p}\aij{i_1}{j_1}^{\epsilon_1}\bij{i_1}{j_1}^{(1-\epsilon_1)}\cdots \aij{i_p}{j_p}^{\epsilon_p}\bij{i_p}{j_p}^{(1-\epsilon_p)}.\]
It is clear from the above sum that $\normal{\nuj{\trrho^{-1}w\trrho}}$ has leading term $T_{\trrho^{-1}w\trrho}$.

By inspection, we note that at $\trho$, each $\aij{i}{j}$ contributes a zero precisely when $(i,j)$ is a vanishing inversion of $\trrho^{-1}w\trrho$, while each $\aij{i}{j}$ and each $\bij{i}{j}$ contributes a pole precisely when $(i,j)$ is a singular inversion of $\trrho^{-1}w\trrho$. 
Below we analyze the cancellation of such poles once one simplifies the products above.

It follows from the bijection between singular and vanishing inversions established in Lemma \ref{lem:inversions} that the leading coefficient $g_{\trrho^{-1}w\trrho}$ has neither a zero nor a pole at $\trho$. 
More precisely, we note that the term $T_{\trrho^{-1}w\trrho}$ 
has $\epsilon_r = 1$ for all vanishing 
$(i_r,j_r) \in \Inv_{0}(\trrho^{-1}w\trrho)$. By Lemma \ref{lem:inversions} the potential zero at $a_{i_r, j_r}$ will cancel with the potential pole of $a_{i_s, j_s}$ for 
$(i_s,j_s) = (i_r,j_r+n) \in \Inv_{\infty}(\trrho^{-1}w\trrho)$. 
Thus the leading coefficient $g_{\trrho^{-1}w\trrho}$, once simplified, has neither zero nor pole at $\trho$.

It remains to show that no pole arises at $\trho$ in any coefficient $g_x$, with $\ellen(x)<\ellen(w)$. 
For this, we require a more elaborate inductive argument to ensure pole cancellation for the sum of the $2^{p-1}$ terms with $\epsilon_r=0$.  We will induct on $\ellen(w)$;  if $\ellen(w)=0$ there is nothing to prove.

Next consider $\ellen(w)=1$. We give a more careful analysis of the terms for which $\epsilon_r=0$ when $w=s_i$.  Using the reduced expression from \eqref{eq:ellen one}, note $\trrho^{-1} s_i \trrho$ has one vanishing inversion $(i_r, j_r) = (i, i+1+n) \in \Inv_0(\trrho^{-1} s_i \trrho)$ for $r=n$ and one singular inversion $(i_s, j_s) = (i, i+1+2n) \in \Inv_\infty(\trrho^{-1} s_i \trrho)$ for $s=1$ corresponding to $\Inv(s_i) = \{ (i, i+1)\}.$  Thus the only potential poles arise from 
$b_{i_s, j_s} = b_{i-n, i+1+n}$ or $a_{i_s, j_s} = a_{i-n, i+1+n}$.
%$b_{i, i+1+2n} = b_{i-n, i+1+n}$ or $a_{i, i+1+2n} = a_{i-n, i+1+n}$. 
We first rewrite 
\begin{eqnarray*}
    \nuj{\trrho^{-1} s_i \trrho} &=
    T_i \nuj{s_i \deltai^{-1}} T_0 \nuj{\deltai} a_{i-n , i+1} a_{i-n , i+1+n} +
     \nuj{s_i \deltai^{-1}} T_0 \nuj{\deltai}  a_{i-n , i+1} b_{i-n , i+1+n} \\
     &+ \, 
    T_i \nuj{s_i \deltai^{-1}}  \nuj{\deltai}  b_{i-n , i+1} a_{i-n , i+1+n} +
     \nuj{s_i \deltai^{-1}}  \nuj{\deltai}   b_{i-n , i+1} b_{i-n , i+1+n} \\
      &=
    T_i \nuj{s_i \deltai^{-1}} T_0 \nuj{\deltai} a_{i-n , i+1} a_{i-n , i+1+n} +
     \nuj{s_i \deltai^{-1}} T_0 \nuj{\deltai}  a_{i-n , i+1} b_{i-n , i+1+n} \\
     & \quad + \, 
     ((t^2-1)T_i (Y_i + t^2 Y_{i+1}) + (t^{-1} Y_i + (t-t^{-1} - t^3) Y_{i+1}) f_{i, i+1}^{-1}  b_{i-n , i+1}.
\end{eqnarray*}

Above we simplified  the last two terms which we see have no pole at $\trho$.
In the following computations, let $\Bullet$ denote either the symbol $a$ or $b$.
For the first two terms, recall 
$a_{i_r, j_r} \,  \Bullet_{i_s,j_s} = a_{i-n , i+1} \, \Bullet_{i-n, i+1+n}$ has no pole at $\trho$.
However, we need to check the terms that contribute to $\epsilon_n=0$ arising from moving $\Bullet_{k,n}$ or $\Bullet_{1,k}$ past $T_0$. 
The following identities
\begin{align*}
    \nuj{s_i \deltai^{-1}} &= \sum  T_{i+1}^{\epsilon_2}
    \cdots T_1^{\epsilon_{n-1}} a_{i+1 , n}^{\epsilon_2} b_{i+1 , n}^{1-\epsilon_2}\cdots a_{1,2}^{\epsilon_{n-1}} b_{1,2}^{1-\epsilon_{n-1}}
    \\
    a_{1,k} T_0 &= T_0 a_{0,k} + b_{0,k} b_{1,k} Y_1/Y_k & 
    \text{for $1 < k \le i$}
    \\
     b_{1,k} T_0 &= T_0 b_{0,k} -t b_{0,k} b_{1,k} Y_1/Y_k & 
    \text{for $1 < k \le i$}
    \\
     a_{k,n} T_0 &= T_0 a_{k, n+1} + b_{k , n+1} b_{k,n} Y_k/Y_n & 
    \text{for $i+1 \le k <n$}
    \\
     b_{k,n} T_0 &= T_0 b_{k , n+1} -t b_{k, n+1} b_{k,n} Y_k/Y_n & 
    \text{for $i+1 \le k <n$}
    \\
    \Bullet_{0,k}\,  \Bullet_{1,k}\,  \nuj{\deltai} & = \nuj{\deltai}\,  \Bullet_{i-n , k-1}\,  \Bullet_{i+1 , k-1} & 
    \text{for $1 < k \le i$}
    \\
     \Bullet_{k,n} \, \Bullet_{k , n+1}\,  \nuj{\deltai}  &= \nuj{\deltai} \, \Bullet_{k+1 , i}\,  \Bullet_{k+1 , i+1+n} & 
    \text{for $i+1 \le k <n$}
    \\
    \nuj{\deltai} &= \sum  T_{1}^{\epsilon_{n+1}}
    \cdots T_i^{\epsilon_{2n-1}} a_{1 , i+1}^{\epsilon_{n+1}} b_{1 , i+1}^{1-\epsilon_{n+1}}\cdots a_{i , i+1}^{\epsilon_{2n-1}} b_{i , i+1}^{1-\epsilon_{2n-1}}
\end{align*}
show that when simplifying the first two terms $\normal{T_i \nuj{s_i \deltai^{-1}} T_0 \nuj{\deltai} a_{i-n , i+1} a_{i-n , i+1+n}}$ and $\normal{\nuj{s_i \deltai^{-1}} T_0 \nuj{\deltai} a_{i-n , i+1} b_{i-n , i+1+n}}$
have no poles at $\trho$.
%(note these sums include terms with $\epsilon_r=0$). 

We have shown $\normal{ \nuj{\trrho^{-1} s_i \trrho}}$ has no poles, i.e., is well-defined when evaluated at $\trho$.  This is equivalent to the following:
 if $\vv$ is a weight vector of weight $\trho$ so is $\vv' = \normal{\nuj{\trrho^{-1} s_i \trrho}} \vv$. 
 (The first part of the proof shows $\vv' $ is nonzero, although that fact is not necessary here.)
Now we can proceed with the induction. 
Given $w$ with $\ellen(w) > 1$, choose $i$ so $\ellen(w s_i) < \ellen(w)$ and write $u = w s_i$ which has smaller length than $w$.   Next $\normal{\nuj{\trrho^{-1} w \trrho}} \vv = \normal{\nuj{\trrho^{-1} u \trrho}} \, \normal{\nuj{\trrho^{-1} s_i \trrho}} \vv = \normal{\nuj{\trrho^{-1} u \trrho}} \vv'$, which is a well-defined weight vector of weight $\trho$ by induction.  
Thus the coefficients of $\normal{\nuj{\trrho^{-1} w \trrho}}$ also have no pole at $\trho$. 

\end{proof}

 \begin{example} \label{ex:bijection}
 Let $n=N=3$.  Then $\trrho = [1,-1,-3]$ in window notation.
  Consider $w = s_1 s_2 = [2,3,1] \in \Sm{3}$  % commas??
  pictured as    \begin{tikzpicture}
\begin{scope}[shift={(0,0)}, scale=0.2]
 \draw[blue!45!white, line width = 2pt] (1,0)--(5,3);
  \draw[blue!45!white, line width = 2pt] (5,0)--(9,3);
   \draw[blue!45!white, line width = 2pt] (9,0)--(1,3);
\end{scope}
\end{tikzpicture}, which has inversions $\{ (1,3), (2,3)\}$
 Then    $u:=\trrho^{-1} w \trrho = [1,5,9] \, [2,3,1] \, [1,-1,-3] = [5,6,-5]$
 is determined by $u(1), u(2+3)$ and $u(3+6)$, which must be permuted among themselves to stabilize $\qrho$.   
 For instance $Y_1 \nuj{u} = \nuj{u} Y_9 = \nuj{u} q^{-2} Y_3 = \nuj{u} t^4 Y_3$  and we note $Y_1 \mid_{\trho} = t^4 Y_3 \mid_{\trho} = t^0$.

 We mark below $i$ how $Y_i$ acts on a weight vector of weight $\qrho = \trho$.
 
\begin{center}
\begin{tikzpicture}
   \begin{scope}[shift={(0,0)}, scale=0.7]
    %\node at (-1,2) {$D =$};
   
   % \draw[red, dashed, thin] (-2.5,-.5)--(-2.5,2.5);
%    \draw[red, dashed, thin] (.5,-.5)--(.5,2.5);
     \draw[red, dashed, thin] (3.5,-1.75)--(3.5,2.5);
\draw[red, dashed, thin] (6.5,-1.75)--(6.5,2.5);
  \draw[red, dashed, thin] (9.5,-1.75)--(9.5,2.5);

\node[blue] at (1,2.5) {$\mathbf 1$};
\node[blue] at (5,2.5) {$\mathbf 5$};
\node[blue] at (9,2.5) {$\mathbf 9$};
\node[blue] at (1,-.5) {$\mathbf 1$};
\node[blue] at (5,-.5) {$\mathbf 5$};
\node[blue] at (9,-.5) {$\mathbf 9$};

\node[green!70!black] at (4,2.5) {$4$};
\node[green!70!black] at (12,-.5) {$12$};

\node[blue] at (1,-1.5) {$\mathbf t^0$};
\node[gray] at (2,-1.5) {$\mathbf t^{-2}$};
\node[gray] at (3,-1.5) {$\mathbf t^{-4}$};
\node[gray] at (4,-1.5) {$\mathbf t^{2}$};
\node[blue] at (5,-1.5) {$\mathbf t^0$};
\node[gray] at (6,-1.5) {$\mathbf t^{-2}$};
\node[gray] at (7,-1.5) {$\mathbf t^{4}$};
\node[gray] at (8,-1.5) {$\mathbf t^{2}$};
\node[blue] at (9,-1.5) {$\mathbf t^0$};
\node[gray] at (10,-1.5) {$\mathbf t^{6}$};
\node[gray] at (11,-1.5) {$\mathbf t^{4}$};
\node[gray] at (12,-1.5) {$\mathbf t^{2}$};

\draw[green!70!black, line width = 1pt] (12,0)--(4,2);

 \draw[blue!45!white, line width = 3pt] (1,0)--(5,2);
  \draw[blue!45!white, line width = 3pt] (5,0)--(9,2);
   \draw[blue!45!white, line width = 3pt] (9,0)--(1,2);
   
%\draw[thick, blue] (3.7,1.35) circle (1ex);
%\draw[thick, blue] (6.3,0.65) circle (1ex);
\node[rectangle,draw=blue] (r) at (3.7,1.35) {};
\node[rectangle,draw=blue] (s) at (6.3,0.65) {};
\node[blue] at (3.7,1.75) {$\mathbf 0$};
\node[blue] at (6.3,1.05) {$\mathbf 0$};

\node[rectangle,draw=green!70!black] (t) at (7.3,1.20) {};
\node[rectangle,draw=green!70!black] (u) at (4.65,1.85) {};
\node[green!70!black] at (7.3,0.80) {$\infty$};
\node[green!70!black] at (4.65,1.45) {$\infty$};

    \draw[fill=blue!45!white] (1,0) circle (.7ex);
    \draw[fill=black] (2,0) circle (.7ex);
    \draw[fill=black] (3,0) circle (.7ex);

        \draw[fill=gray!50!white] (4,0) circle (.7ex);
    \draw[fill=blue!45!white] (5,0) circle (.7ex);
          \draw[fill=gray!50!white] (6,0) circle (.7ex);
    \draw[fill=gray!50!white] (7,0) circle (.7ex);
    \draw[fill=gray!50!white] (8,0) circle (.7ex);
    \draw[fill=blue!45!white] (9,0) circle (.7ex);

        \draw[fill=gray!50!white] (10,0) circle (.7ex);
    \draw[fill=gray!50!white] (11,0) circle (.7ex);
    \draw[fill=green!70!black] (12,0) circle (.7ex);
    %%%\draw[fill=black] (12,0) circle (.7ex);
  %  \draw[fill=black] (0,0) circle (.7ex);
  %  \draw[fill=black] (0,0) circle (.7ex);

    \draw[fill=blue!45!white] (1,2) circle (.7ex);
    \draw[fill=gray!50!white] (2,2) circle (.7ex);
    \draw[fill=gray!50!white] (3,2) circle (.7ex);

        \draw[fill=green!70!black] (4,2) circle (.7ex);
    \draw[fill=blue!45!white] (5,2) circle (.7ex);
          \draw[fill=black] (6,2) circle (.7ex);
    \draw[fill=gray!50!white] (7,2) circle (.7ex);
    \draw[fill=gray!50!white] (8,2) circle (.7ex);
    \draw[fill=blue!45!white] (9,2) circle (.7ex);

        \draw[fill=gray!50!white] (10,2) circle (.7ex);
    \draw[fill=gray!50!white] (11,2) circle (.7ex);
    \draw[fill=gray!50!white] (12,2) circle (.7ex);

    \end{scope}
\end{tikzpicture}
\end{center} 

\noindent We depict $u$ above. The {\color{blue} blue} boxed  inversions labeled {\color{blue} $0$}   are  $\Inv_0(u) = \{(1,9) , \, (2,6) \equiv (5,9)  \}= \{\alpha(1,3), \, \alpha(2,3)\}$ and correspond to where {\color{blue} blue} strands cross.  These are in bijection with $\Inv_\infty(u) = \{(1,12) , \, (2,9) \equiv (5,12) \}= \{\beta(1,3), \, \beta(2,3)\}$ corresponding to the  forced inversions from the $N$-translated {\color{green!70!black} green} strand.  These are labeled {\color{green!70!black} $\infty$} and {\color{green!70!black} green} boxed.
Below we draw more of $u \in \AffSymn{3}$, taking note that contributions to $\Inv_\infty(u)$ only occur for ${\color{blue} i } < {\color{green!70!black} j}$. Observe $\ellen(u)=8$ and 
$\Inv(u) = \{ {\color{green!70!black} (1,12)}, (2,12),
{\color{blue} (1,9)}, {\color{green!70!black} (2,9)},
 (1,6),  {\color{blue} (2,6)},
 (1,3), (2,3)  \}$ 
  which is ordered corresponding to the reduced expression $u= s_1 s_2 s_0 s_1 s_2 s_0 s_1 s_2$.

\begin{tikzpicture}
    \begin{scope}[shift={(0,0)}, scale=0.8]
    %\node at (-1,2) {$D =$};
   
    \draw[red, dashed, thin] (-2.5,-.5)--(-2.5,2.5);
      \draw[red, dashed, thin] (.5,-.5)--(.5,2.5);
        \draw[red, dashed, thin] (3.5,-.5)--(3.5,2.5);
          \draw[red, dashed, thin] (6.5,-.5)--(6.5,2.5);
            \draw[red, dashed, thin] (9.5,-.5)--(9.5,2.5);

%%%\node[blue] at (1,2.5) {$\mathbf 1$};
\node[blue] at (5,2.5) {$\mathbf 5$};
%%%\node[blue] at (9,2.5) {$\mathbf 9$};
\node[blue] at (1,-.5) {$\mathbf 1$};

\node[black] at (2,-.5) {$2$};
\node[black] at (3,-.5) {$3$};
\node[black] at (6,2.5) {$6$};
\node[black] at (-5,2.5) {$-5$};

\draw[gray!50!white, line width = 1pt] (-5,0)--(-1,2); %% gray!!
\draw[gray!50!white, line width = 1pt] (-4,0)--(0,2);
\draw[gray!50!white, line width = 1pt] (-3,0)--(-5,.5);

\draw[gray!50!white, line width = 1pt] (-2,0)--(2,2);
\draw[gray!50!white, line width = 1pt] (-1,0)--(3,2);
\draw[gray!50!white, line width = 1pt] (0,0)--(-4,1);
%%\draw[black, line width = 1pt] (0,0)--(-8,2);
%
%\draw[black, line width = 1pt] (1,0)--(5,2);
\draw[black, line width = 1pt] (2,0)--(6,2);
\draw[black, line width = 1pt] (3,0)--(-5,2);
\draw[green!70!black, line width = 1pt] (4,0)--(8,2);
%\draw[black, line width = 1pt] (5,0)--(9,2);
\draw[gray!50!white, line width = 1pt] (6,0)--(-2,2);
\draw[gray!50!white, line width = 1pt] (7,0)--(11,2);
\draw[green!70!black, line width = 1pt] (8,0)--(12,2);
%\draw[black, line width = 1pt] (9,0)--(1,2);

%\draw[green!70!black, line width = 1pt] (6,0)--(-2,2);
\draw[green!70!black, line width = 1pt] (12,0)--(4,2);

 \draw[blue!45!white, line width = 3pt] (1,0)--(5,2);
  \draw[blue!45!white, line width = 3pt] (5,0)--(9,2);
   \draw[blue!45!white, line width = 3pt] (9,0)--(1,2);

\node[rectangle,draw=blue] (r) at (3.7,1.35) {};
\node[rectangle,draw=blue] (s) at (6.3,0.65) {};

\node[rectangle,draw=green!70!black] (t) at (7.3,1.20) {};
\node[rectangle,draw=green!70!black] (u) at (4.65,1.85) {};

    \draw[fill=gray!50!white] (-5,0) circle (.7ex); %%
    \draw[fill=gray!50!white] (-4,0) circle (.7ex);
    \draw[fill=gray!50!white] (-3,0) circle (.7ex);
    \draw[fill=gray!50!white] (-2,0) circle (.7ex);
    \draw[fill=gray!50!white] (-1,0) circle (.7ex);

    \draw[fill=gray!50!white] (0,0) circle (.7ex);
    \draw[fill=blue!45!white] (1,0) circle (.7ex);
    \draw[fill=black] (2,0) circle (.7ex);
    \draw[fill=black] (3,0) circle (.7ex);

        \draw[fill=gray!50!white] (4,0) circle (.7ex);
    \draw[fill=blue!45!white] (5,0) circle (.7ex);
          \draw[fill=gray!50!white] (6,0) circle (.7ex);
    \draw[fill=gray!50!white] (7,0) circle (.7ex);
    \draw[fill=gray!50!white] (8,0) circle (.7ex);
    \draw[fill=blue!45!white] (9,0) circle (.7ex);

        \draw[fill=gray!50!white] (10,0) circle (.7ex);
    \draw[fill=gray!50!white] (11,0) circle (.7ex);
    \draw[fill=green!70!black] (12,0) circle (.7ex);
  %  \draw[fill=black] (0,0) circle (.7ex);
  %  \draw[fill=black] (0,0) circle (.7ex);

        \draw[fill=black] (-5,2) circle (.7ex);
    \draw[fill=gray!50!white] (-4,2) circle (.7ex);
    \draw[fill=gray!50!white] (-3,2) circle (.7ex);
    \draw[fill=gray!50!white] (-2,2) circle (.7ex);
    \draw[fill=gray!50!white] (-1,2) circle (.7ex);

    \draw[fill=gray!50!white] (0,2) circle (.7ex);
    \draw[fill=blue!45!white] (1,2) circle (.7ex);
    \draw[fill=gray!50!white] (2,2) circle (.7ex);
    \draw[fill=gray!50!white] (3,2) circle (.7ex);

        \draw[fill=green!70!black] (4,2) circle (.7ex);
    \draw[fill=blue!45!white] (5,2) circle (.7ex);
          \draw[fill=black] (6,2) circle (.7ex);
    \draw[fill=gray!50!white] (7,2) circle (.7ex);
    \draw[fill=gray!50!white] (8,2) circle (.7ex);
    \draw[fill=blue!45!white] (9,2) circle (.7ex);

        \draw[fill=gray!50!white] (10,2) circle (.7ex);
    \draw[fill=gray!50!white] (11,2) circle (.7ex);
    \draw[fill=gray!50!white] (12,2) circle (.7ex);

    \end{scope}
    \end{tikzpicture}

    \noindent We may expand $\nuj{u}  = \sum_{\underline{\epsilon}\in \{0,1\}^8} T_1^{\epsilon_1} T_2^{\epsilon_2}T_0^{\epsilon_3}T_1^{\epsilon_4}T_2^{\epsilon_5}T_0^{\epsilon_6}T_1^{\epsilon_7}T_2^{\epsilon_8} 
    \Bullet_{-2,9} \, \Bullet_{-1,9}\, \Bullet_{-2,6}\, \Bullet_{-1,6}\, \Bullet_{1,6}\, \Bullet_{-1,3}\, \Bullet_{1,3}\, \Bullet_{2,3}$
    $  = \sum_{\underline{\epsilon}\in \{0,1\}^8}   T_1^{\epsilon_1} T_2^{\epsilon_2}T_0^{\epsilon_3}T_1^{\epsilon_4}T_2^{\epsilon_5}T_0^{\epsilon_6}T_1^{\epsilon_7}T_2^{\epsilon_8} 
    \Bullet_{\color{green!70!black} 1,12} \, \Bullet_{2,12}\, \Bullet_{\color{blue} 1,9}\, \Bullet_{\color{green!70!black} 2,9}\, \Bullet_{1,6}\, \Bullet_{\color{blue} 2,6}\, \Bullet_{1,3}\, \Bullet_{2,3}$
 \end{example}
\subsection{Proof of Theorem \ref{mainthm:Springer} via intertwiners}
Let us now outline an alternative proof of Theorem \ref{mainthm:Springer} that relies on Proposition \ref{prop:morita}.

 In the next two theorems, we use $\vv$ to denote a generator  of a one-dimensional $\K[\Y]$-module.
Observe that if $\vv$ has $\Y$-weight $\trho = \qrho$ then for a $g \in \K[\Yloc]$ without zeros or poles at $\trho$ we have $\normal{g}|_{\trho} \ot \vv = \normal{g} \ot \vv$.

\begin{theorem} \label{thm:End qrho}
The map
\begin{align*}
\Phi: \K[\Sym_n]^{op} &\to \End_{\HH}(\Drho),\\
w &\mapsto \Phi_w,
\end{align*}
where $\Phi_w(h\ot\vv) = (h \normal{\nuj{\trrho^{-1}w\trrho}})\ot\vv$, for $ h \in \HH $ defines a $\K$-algebra isomorphism.
\end{theorem}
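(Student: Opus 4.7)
The plan is to establish the four standard steps: well-definedness of each $\Phi_w$ as an $\HH$-module endomorphism, multiplicativity (with the appropriate $\op$), injectivity, and surjectivity. For well-definedness, an $\HH$-linear endomorphism of $\Drho = \HH\otimes_\Y \trho$ is determined by an element of the weight space $(\Drho)_\trho$. Theorem~\ref{thm:nopoles} guarantees $\normal{\nuj{\trrho^{-1}w\trrho}}$ has no pole at $\trho$, so $\normal{\nuj{\trrho^{-1}w\trrho}} \otimes v$ is a bona fide element of $\Drho$, and the intertwining identity $Y_j \nuj{u} = \nuj{u} Y_{u^{-1}(j)}$ applied to $u = \trrho^{-1}w\trrho \in \Stab(\trho)$ verifies that this element has $\Y$-weight $\trho$. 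Multiplicativity then follows from the braid relations \eqref{eq:nubraid1}, \eqref{eq:nubraid2} together with $\nu_i^2 = 1$, which imply $\nuj{u}\nuj{u'} = \nuj{uu'}$ for all $u,u' \in \AffSym$. Applying this to $u = \trrho^{-1}w\trrho$ and $u' = \trrho^{-1}w'\trrho$ yields $\Phi_{w'} \circ \Phi_w = \Phi_{ww'}$ (the reversal coming from the fact that $\Phi_w$ acts by right multiplication), so $\Phi \colon \K[\Sym_n]^{\op} \to \End_\HH(\Drho)$ is a well-defined algebra homomorphism.

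For injectivity, Theorem~\ref{thm:nopoles} also asserts that $\normal{\nuj{\trrho^{-1}w\trrho}}$ has leading term $T_{\trrho^{-1}w\trrho}$ with nonvanishing leading coefficient at $\trho$. Since $w \mapsto \trrho^{-1}w\trrho$ is injective into $\AffSym$, the $n!$ vectors $\{\Phi_w(1 \otimes v)\}_{w \in \Sym_n}$ have distinct leading terms with respect to the basis $\{T_x \otimes v\}_{x \in \AffSym}$ of $\Drho$, hence are $\K$-linearly independent, and $\Phi$ is injective.

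For surjectivity, Frobenius reciprocity gives $\End_\HH(\Drho) \cong (\Drho)_\trho$, so it suffices to show $\dim_\K (\Drho)_\trho \leq n!$. The Bernstein--Lusztig-style triangular structure of $\HH$ relative to the basis $\{T_x \otimes v\}_{x\in \AffSym}$ shows that the diagonal $\Y$-weight of $T_u \otimes v$ is $u \cdot \trho$ modulo strictly Bruhat-lower contributions; consequently the generalized $\trho$-weight space of $\Drho$ has dimension at most $|\Stab(\trho)| = n!$, and a fortiori so does the honest weight space $(\Drho)_\trho$. Combined with injectivity, this forces equality of dimensions and proves $\Phi$ is an isomorphism.

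The main obstacle is analytic rather than algebraic: one must control the regularity of $\nuj{u}$ at the degenerate character $\trho$, where the denominators $\fij{i}{j}$ may vanish and singular poles may appear. This is precisely what Theorem~\ref{thm:nopoles} supplies, via the combinatorial bijection (Lemma~\ref{lem:inversions}) matching singular inversions against vanishing inversions to achieve pole cancellation order by order. Once that input is in place, the remaining algebraic steps follow cleanly from the braid/involution relations for $\nu$ and from Frobenius reciprocity.
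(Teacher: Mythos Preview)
Your proof is correct and follows essentially the same approach as the paper's: well-definedness via Theorem~\ref{thm:nopoles} and Frobenius reciprocity, multiplicativity via the identity $\nuj{\trrho^{-1}w\trrho}\nuj{\trrho^{-1}u\trrho} = \nuj{\trrho^{-1}wu\trrho}$ in $\HHloc$, and bijectivity via the leading-term analysis. The only difference is that you make explicit the dimension bound $\dim (\Drho)_{\trho} \le n!$ via the Bernstein--Lusztig triangularity of the $\Y$-action on the basis $\{T_x\otimes\vv\}$, whereas the paper simply asserts that the constructed vectors form a basis of the weight space; your added detail is a welcome clarification of what the paper leaves implicit.
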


\begin{proof}
First, let us confirm that each $\Phi_w$ is indeed a $\HH$-linear endomorphism of $\Drho$.  Theorem \ref{thm:nopoles} ensures the well-definedness of each expression $(\normal{\nuj{\trrho^{-1}w\trrho}})\ot\vv$, while  the intertwiner property implies that $(\normal{\nuj{\trrho^{-1}w\trrho}})\ot\vv$ lies in the $\trho$ $\Y$-weight space. Hence Frobenius reciprocity for the functor $\Ind^\HH_\Y$ produces the asserted module-homomorphism.

The homomorphism property for $\Phi$ follows from the following identity in $\HHloc$:
\[\nuj{\trrho^{-1}w\trrho}\nuj{\trrho^{-1}u\trrho} = \nuj{\trrho^{-1}wu\trrho}.\]
Finally, we observe that, as a consequence of Theorem \ref{thm:nopoles}, the set $\{(\normal{\nuj{\trrho^{-1}w\trrho}})\ot\vv \mid w \in \Sn\}$ forms a basis of the $\trho$ $\Y$-weight space, hence $\Phi$ is an isomorphism.
\end{proof}

 Theorem \ref{mainthm:Springer} now follows, using Proposition  \ref{prop:HKgen} and Corollary \ref{cor:qrho}.

\subsection{Proof of Theorem \ref{mainthm:Springerchi} via intertwiners} \label{sec: chi neq qrho}

In this section, we consider the endomorphism algebra $\End_{\HH}(\Ind_\Y^\HH \atup)$ for more general  $\atup$ than $\trho = \qrho$, but still  descending. %% in reverse right order. 
Note $\qrho = \trho$ is descending.
%% reverse right ordered.  
In other words, by Theorem \ref{thm:Ind SH}, we consider $\End_{\HH}(\Ind_{\SH}^\HH \asgn)$.

It is convenient to introduce the following terminology:
\begin{definition} Let $a,b, r\in\K^\times$.  We say that $a$ and $b$ are \defterm{in the same $r$-line} if $a/b=r^{z}$ for some $z\in\mathbb{Z}$, and otherwise that they are \defterm{in distinct $r$-lines}.
\end{definition}

\begin{theorem} \label{thm:End atup}
Let $\atup \in (\K^\times)^n$ be descending
%% reverse right-ordered 
such that entries in the same $t^2$-line are in consecutive position.
Further assume
 that $\atup$ is \defterm{transverse}, meaning that $\Stab_{\AffSym}(\atup) \cap \Sn =\Stab_{\Sn}(\atup)= \{ \id \}$, i.e., that $a_i \neq a_j$ if $i \neq j$. Let $\gamma_{\atup} \in \AffSym$ be of minimal length such that $\gamma_{\atup} \Stab_{\AffSym}(\atup) \gamma_{\atup}^{-1} = W_J \subseteq \Sn$ for appropriate standard parabolic subgroup.
Then the map
\begin{align*}
\Phi: \K[W_J]^{op} &\to \End_{\HH}(\Ind_\Y^\HH \atup),\\
w &\mapsto \Phi_w,
\end{align*}
where $\Phi_w(h\ot\vv) = (h \normal{ \nuj{{\gamma_\atup}^{-1} w \gamma_{\atup} }}  |_{\atup} )\ot\vv$, 
defines a $\K$-algebra isomorphism.
\end{theorem}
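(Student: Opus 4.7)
My plan is to mirror the proof of Theorem \ref{thm:End qrho}, with the key technical input being a generalization of Theorem \ref{thm:nopoles} from the special weight $\trho$ to the descending, transverse weight $\atup$. I will first verify that for every $w \in W_J$, the formula $\Phi_w(h\otimes \vv) = h \normal{\nuj{\gamma_\atup^{-1} w \gamma_\atup}}|_{\atup} \otimes \vv$ makes sense, then check the algebra homomorphism property from the braid and quadratic relations for the $\nu_i$, and finally verify surjectivity by a dimension count on the $\atup$-weight space of $\Ind_\Y^\HH \atup$.

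For the well-definedness, I would first observe that since $\gamma_\atup \Stab_{\AffSym}(\atup) \gamma_\atup^{-1} = W_J$, conjugation gives $\gamma_\atup^{-1} w \gamma_\atup \in \Stab_{\AffSym}(\atup)$ for every $w \in W_J$, so the intertwining relation \eqref{eq:pinu} (together with the analogous one for the $T_i$) ensures that, wherever it is defined, $\normal{\nuj{\gamma_\atup^{-1} w \gamma_\atup}} \otimes \vv$ lies in the $\atup$-weight space. The substantive claim is that $\normal{\nuj{\gamma_\atup^{-1} w \gamma_\atup}}$ has no pole at $\atup$, and that its leading term $T_{\gamma_\atup^{-1} w \gamma_\atup}$ has leading coefficient regular and nonzero at $\atup$. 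Following the proof of Theorem \ref{thm:nopoles}, this reduces to a bijection between the set of \emph{vanishing inversions} of $\gamma_\atup^{-1} w \gamma_\atup$ (i.e., inversions $(i,j)$ with $\fij{i}{j}|_\atup = 0$) and its \emph{singular inversions} (i.e., those with $\fij{i}{j}|_\atup^{-1} = \infty$), matched so that the zero in the leading term cancels the pole from the ``other side'' of the product expansion.

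The technical heart of this bijection rests on the hypothesis that entries of $\atup$ lying on the same $t^2$-line occupy consecutive positions, so that $W_J$ is literally a standard parabolic. Under this assumption, if $(i,j)$ is such that $\fij{i}{j}|_\atup = 0$, then $a_i = t^{-2} q^k a_j$ for some integer $k$, and the consecutive-positions condition translates into a clean translation of indices (by multiples of $n$) producing a matching singular inversion, exactly as in Lemma \ref{lem:inversions}. I would then replicate the inductive length argument of Theorem \ref{thm:nopoles}, handling the base case of a simple reflection $s \in W_J$ by reducing $\gamma_\atup^{-1} s \gamma_\atup$ to a reduced expression analogous to \eqref{eq:ellen one} and computing the cancellations explicitly; the induction step is then formal, passing from $w$ to $ws$. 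I expect this bookkeeping, and in particular the identification of the correct reduced decomposition of $\gamma_\atup^{-1} s \gamma_\atup$ for each simple reflection $s$ of $W_J$, to be the main obstacle.

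Once the no-pole statement is in hand, the homomorphism property $\Phi_w \Phi_u = \Phi_{uw}$ follows from the multiplicativity $\nuj{v_1} \nuj{v_2} = \nuj{v_1 v_2}$ in $\HHloc$ (valid whenever lengths add), together with the quadratic relation $\nu_i^2 = 1$ to handle cases when lengths do not add; these together give the group algebra relations of $W_J$, with the opposite multiplication coming from the convention of right action on $h \otimes \vv$. For bijectivity, by Theorem \ref{thm:Ind SH} the module $\Ind_\Y^\HH \atup$ is isomorphic to $\Ind_{\SH}^\HH \asgn$ and hence is $\Y$-semisimple with weight multiplicities computable from the orbit; the transversality hypothesis, together with the fact that $\gamma_\atup \Stab_{\AffSym}(\atup)\gamma_\atup^{-1} = W_J$, implies that the $\atup$-weight space has dimension exactly $|W_J|$. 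The leading-term statement from the no-pole argument shows that the vectors $\{\Phi_w(1 \otimes \vv) \mid w \in W_J\}$ are linearly independent in this weight space (their leading terms $T_{\gamma_\atup^{-1} w \gamma_\atup}$ are distinct), hence form a basis; Frobenius reciprocity then upgrades this to the assertion that $\Phi$ is a $\K$-algebra isomorphism.
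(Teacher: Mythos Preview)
Your proposal is correct and takes essentially the same approach as the paper's own proof, which is itself only a sketch indicating how to adapt Theorem~\ref{thm:End qrho} and Lemma~\ref{lem:inversions} to general transverse descending $\atup$. One small correction: your claim that $\Ind_\Y^\HH \atup$ is $\Y$-semisimple is not true in general (already in Example~\ref{ex:non ssl} the summand ${\mathbf T}$ of $\Ind_\Y^\HH \trho$ fails to be $\Y$-semisimple). What you actually need, and what the paper invokes, is only that the \emph{generalized} $\atup$-weight space has dimension $|W_J|$; your leading-term argument then exhibits $|W_J|$ linearly independent ordinary $\atup$-weight vectors, which both gives the basis you want and shows a posteriori that the ordinary and generalized $\atup$-weight spaces agree.
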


\begin{proof}
Let us outline how to modify the proof of Theorem \ref{thm:End qrho} to apply here.
Note that $J $ is determined by the different $t^2$-lines that the $a_i$ lie on. 
In particular, the generalized $\atup$ $\Y$-weight space of $\Ind_\Y^\HH \atup$ has dimension $|W_J|$. 
Similar to Lemma \ref{lem:inversions}, if $ w \in W_J$, we still have  bijections between $\Inv(w)$, the inversions of $\gamma_{\atup}^{-1} w \gamma_{\atup}$ that potentially introduce poles to the coefficients of $\normal{ \nuj{{\gamma_\atup}^{-1} w {\gamma_{\atup} }} }$, and those inversions that potentially introduce zeros. (The criterion for an inversion being vanishing or singular is more complicated in this case and not just determined by height.)
As before, we are able to construct $|W_J|$ linearly independent $\Y$-weight vectors of weight $\atup$ and corresponding endomorphisms to yield the isomorphism $\Phi$. 
\end{proof}

Theorem \ref{mainthm:Springerchi} now follows, using 
Proposition \ref{prop:HKgen}.

\begin{remark} \label{rem:SLGL affsym} $\SLGL$
Very few modifications need be made for the $G=\SLN$ case of the above proof. Wherever $\AffSym$ appears, we instead take its quotient by the subgroup generated by $\pi^n$. In computing the stabilizer of a weight we  use the modified action of $\Spi$ as in \eqref{eq:modify pi for SL}. 
We also compare entries in the same $t^{2/N}$-lines, and similarly we declare these to be descending if whenever $a_i/a_j = t^{2z/N }$ with $z \in \Z$  and $i<j$ then $z \ge 0$. The $\atup$ we will consider furthermore are required to have $\prod_{i=1}^N a_i = \Zprod$.  \end{remark}

\begin{remark}The non-transverse case, where $\Stab_{\AffSym}(\atup) \cap \Sn = W_K \neq \{ \id \}$, is more complicated. We may still write $\gamma_{\atup} \Stab_{\AffSym}(\atup) \gamma_{\atup}^{-1} = W_J \subseteq \Sn$, but we have $K \subseteq J$.
While the {\em generalized} $\atup$ $\Y$-weight space of $\Ind_\Y^\HH \atup$ still  has dimension $|W_J|$, the {\em ordinary}
$\atup$ $\Y$-weight space only has dimension $|W_{J \setminus K}|$.
Hence $\dim \End_{\HH}(\Ind_{\Y}^\HH \atup) = |W_{J \setminus K}|$ in this case. 
However the ring structure of $\End_{\HH}(\Ind_\Y^\HH \atup)$ is more complicated than that of the semisimple algebra $\K[W_{J \setminus K}]$, and can contain nilpotent elements.  This is related to the fact that the induced module is not semisimple.   See Example \ref{ex:nilpotent end} below.
\end{remark}

\begin{example}\label{ex:non ssl}
This example shows the necessity of $\atup$ being descending for various results in this paper.
Let $n=N=2$. Let $\atup = \trho = (t^0, t^{-2})$ and $\btup = (t^{-2}, t^0)$.   Then $\Ind_{\SH}^\HH \asgn \simeq \Ind_{\SH}^\HH \{\btup\}\boxtimes \sgn \simeq  \Ind_{\Y}^\HH \atup \simeq {\mathbf T} \oplus {\mathbf S}$. 
We have let ${\mathbf T} = \Ind_{\HY}^{\HH} {\mathtt{triv}}$ for ${\mathtt {triv}}$ the one-dimensional $\HY$-module on which $(T_1-t), \, (Y_1- t^{-2}), (Y_2 -1)$ all vanish. 
We let ${\mathbf S} = \Ind_{\HY}^{\HH} {\mathtt {sgn}}$ for ${\mathtt {sgn}}$ the one-dimensional $\HY$-module on which $(T_1 + t^{-1}), \, (Y_1- 1), (Y_2 -t^{-2})$ all vanish.  As in Theorem \ref{thm:End qrho}, $\End_{\HH}(\Drho) \simeq \End_{\HH}({\mathbf T} \oplus {\mathbf S}) \simeq \K[\Sm{2}]^{\op}$.

Next consider $\Ind_{\Y}^\HH \btup$. Observe $
\btup$ is not descending, but since it is in the same $\AffSymn{2}$-orbit as $\atup$, it will have the same composition factors as above.  In fact
$0 \to {\mathbf S} \to \Ind_{\Y}^\HH \btup \to {\mathbf T} \to 0$ is non-split and so the induced modules has trivial endomorphism algebra.  To see the above exact sequences does not split, one can compute that the $\btup$ $\Y$-weight space of ${\mathbf S}$ is zero. (This is a ``rectangular" representation from \cite{Jordan-Vazirani}.)  On the other hand, the generalized $\btup$ weight space of ${\mathbf T}$ is two-dimensional, but the ordinary weight space is just one-dimensional. 
In particular ${\mathbf T}$ is not $\Y$-semisimple.
(Note, we could have also taken $\btup = (t^0, t^0)$ as in Section \ref{subsec:End HK via shift}, keeping Remark \ref{rem:Ind Y not ssl} in mind.)
\end{example}
\begin{example}\label{ex:nilpotent end}
This example shows the necessity of $\atup$ being transverse for Theorem \ref{thm:End atup} to hold.  We give an example of an induced module that has a nonzero nilpotent endomorphism, hence neither it nor its endomorphism algebra is  semisimple.  Let $n=N=3$ and $\atup = (t^0, t^0, t^{-2})$ which is descending but clearly not transverse. We take $\gamma$ to be translation by $(0,0,-1)$, 
so the stabilizer of $\atup$ is 
\[\gamma^{-1}\Sm{3} \gamma = \{\id, \,  s_1, \,  s_2 s_1 s_0 s_1 s_2, \,  s_1 s_2 s_1 s_0 s_1 s_2,  \, s_2 s_1 s_0 s_1 s_2 s_1, \,  s_1 s_2 s_1 s_0 s_1 s_2 s_1\}.\]
 The first difference is that  $\nuj{s_1} \vv = \vv$ and $T_1 \vv$ is a generalized $\atup$ $\Y$-weight vector.  It is easy to see the generalized $\atup$ weight space has dimension 6 and the ordinary $\atup$ weight space has dimension $\le 3$.  But in fact, as asserted above, the weight space only has dimension 2, as indicated by  $|J \setminus K| = 1$.  Next, consider $\gamma^{-1} s_2 \gamma = s_2 s_1 s_0 s_1 s_2$. 
   %%Similar to Theorem \ref{thm:nopoles}, 
   One can expand 
   \begin{align*}   
  \nuj{\gamma^{-1} s_2 \gamma} &= 
   \sum_{\underline{\epsilon}\in \{0,1\}^5} T_{2}^{\epsilon_1}  T_{1}^{\epsilon_2}T_{0}^{\epsilon_3}T_{1}^{\epsilon_4}
   T_{2}^{\epsilon_5} \Bullet_{-1,6} \Bullet_{-1,1} \Bullet_{-1,3} \Bullet_{1,3} \Bullet_{2,3} \\
   &= 
   \sum_{\underline{\epsilon}\in \{0,1\}^5} T_{2}^{\epsilon_1}  T_{1}^{\epsilon_2}T_{0}^{\epsilon_3}T_{1}^{\epsilon_4}
   T_{2}^{\epsilon_5} \Bullet_{2,9} \Bullet_{3,1} \Bullet_{2,6} \Bullet_{1,3} \Bullet_{2,3}. \end{align*} 
   As in the proof of Theorem \ref{thm:nopoles}, for each term with $\epsilon_3=1$, the potential zero in $a_{-1,3}$ cancels the potential pole of $\Bullet_{-1,6}$.  The collection of terms with $\epsilon_3=0$ all combine and cancel the potential pole of $\Bullet_{-1,6}$. However the fact that $s_1$ stabilizes $\atup$ gives an unexpected pole at $\Bullet_{-1,1}$ that cannot be cancelled.  We rectify this by rescaling by $\fij{-1}{1}$, that is, we take $\nuj{s_2 s_1 s_0 s_1 s_2} \fij{-1}{1} = \nuj{s_2} \phij{s_1} \nuj{s_0 s_1 s_2}$ whose normal ordering has no poles at $\atup$ and has leading term corresponding to $\gamma^{-1} s_2 \gamma$.  In other words, $\normal{\nuj{\gamma^{-1} s_2 \gamma} \fij{-1}{1}} \vv $ in a nonzero $\Y$-weight vector of weight $\atup$.  However one may compute that the corresponding nonzero endomorphism determined by $\vv \mapsto \normal{\nuj{\gamma^{-1} s_2 \gamma} \fij{-1}{1}} \vv$ is nilpotent---it squares to 0. 
\end{example}

\printbibliography
\end{document}